  \theoremstyle{plain}
    \newtheorem{thm}{Theorem}[section]
    \newtheorem{prop}[thm]{Proposition}
    \newtheorem{corollary}[thm]{Corollary}
    \newtheorem{subsec}[thm]{}
\theoremstyle{definition}
    \newtheorem{defn}[thm]{Definition}
        \newtheorem{remark}[thm]{Remark}
    \newtheorem{exam}[thm]{Example}
\theoremstyle{remark}
\title{}
\author{}
\date{}
\begin{document}

\title{Pre-Leibniz algebras}

\author{Apurba Das}
\address{Department of Mathematics,
Indian Institute of Technology, Kharagpur 721302, West Bengal, India.}
\email{apurbadas348@gmail.com}

\begin{abstract}
The notion of pre-Leibniz algebras was recently introduced in the study of Rota-Baxter operators on Leibniz algebras. In this paper, we first construct a graded Lie algebra whose Maurer-Cartan elements are pre-Leibniz algebras. Using this characterization, we define the cohomology of a pre-Leibniz algebra with coefficients in a representation. This cohomology is shown to split the Loday-Pirashvili cohomology of Leibniz algebras. As applications of our cohomology, we study formal and finite order deformations of a pre-Leibniz algebra. Finally, we define homotopy pre-Leibniz algebras and classify some special types of homotopy pre-Leibniz algebras.
\end{abstract}

\maketitle



\medskip

\noindent {2010 MSC classification:} 17A32, 17A99, 17B56, 16S80, 18G55.

\noindent {Keywords:} Pre-Leibniz algebras, Maurer-Cartan elements, Cohomology, Deformations, Homotopy pre-Leibniz algebras. 

\medskip


\thispagestyle{empty}

\tableofcontents


\medskip


\section{Introduction}\label{sec1}

The notion of pre-Lie algebras was first introduced by Gerstenhaber in his work on deformations of algebras \cite{gers,gers2}. A pre-Lie algebra naturally induces a Lie algebra structure on the underlying vector space.
Pre-Lie algebras have found important connections with rooted trees, vector fields on affine spaces, deformation theory and Rota-Baxter operators on Lie algebras \cite{livernet,diatta,dotsenko,kuper}. Cohomology and deformation theory for pre-Lie algebras was introduced by Dzhumadil'daev \cite{dz}. Such study generalizes the classical Hochschild cohomology theory and deformation theory for associative algebras \cite{gers,hoch}. The notion of homotopy pre-Lie algebras was described by Chapoton and Livernet in terms of rooted trees \cite{livernet}. For more on pre-Lie algebras, see \cite{burde,bai4,bai6,segal50}.

\medskip

On the other hand, Leibniz algebras were first considered by Bloh \cite{bloh,bloh1} and rediscovered by Loday \cite{loday-une} in the study of periodicity phenomenon in algebraic $K$-theory. Leibniz algebras are roughly a noncommutative generalization of Lie algebras. In \cite{loday-pira} Loday and Pirashvili introduced a cohomology theory for Leibniz algebras with coefficients in a representation. Given a vector space $\mathfrak{g}$, Balavoine \cite{bala-operad} constructs a graded Lie algebra (known as Balavoine's graded Lie algebra) on the space of multilinear maps on $\mathfrak{g}$ whose Maurer-Cartan elements correspond to Leibniz algebra structures on $\mathfrak{g}$. 
Recently, Rota-Baxter operators and relative Rota-Baxter operators on Leibniz algebras are introduced and their relation with Leibniz Yang-Baxter equation and Leibniz bialgebras are discovered in \cite{ra,sheng-pl,sheng-defr}. Like a Rota-Baxter operator on a Lie algebra induces a pre-Lie algebra structure, a (relative) Rota-Baxter operator on a Leibniz algebra gives rise to a pre-Leibniz algebra (already introduced in \cite{sheng-pl} by the name of Leibniz-dendriform algebra) structure. A pre-Leibniz algebra $(\mathfrak{a}, \triangleleft, \triangleright)$ is a vector space $\mathfrak{a}$ together with two bilinear operations $ \triangleleft , \triangleright : \mathfrak{a} \otimes \mathfrak{a} \rightarrow \mathfrak{a}$ satisfying a set of three identities (see Definition \ref{pl-defn}). It turns out that the sum of the two operations is indeed a Leibniz algebra structure on $\mathfrak{a}$, called the total Leibniz algebra and denoted by $\mathfrak{a}_\mathrm{Tot}$. Thus, pre-Leibniz  algebras can be considered as a splitting of Leibniz algebras. Nothing is known about this newly defined pre-Leibniz algebra structure. Our aim in this paper is to give an extensive study of this new structure. 

\medskip

Given a vector space $\mathfrak{a}$, we first construct a graded Lie algebra whose Maurer-Cartan elements are in one-to-one correspondence with pre-Leibniz algebra structures on $\mathfrak{a}$. This graded Lie algebra can be considered as a splitting of the Balavoine's graded Lie algebra. Next, we introduce representations of a pre-Leibniz algebra and define the cohomology of a pre-Leibniz algebra with coefficients in a representation. We show that there is a morphism from the cohomology of a pre-Leibniz algebra $(\mathfrak{a}, \triangleleft, \triangleright )$ to the Loday-Pirashvili cohomology of the total Leibniz algebra $\mathfrak{a}_\mathrm{Tot}$ (cf. Theorem \ref{rel-lp-coho}). 

\medskip

Next, we consider formal, infinitesimal and finite order deformations of a pre-Leibniz algebra $(\mathfrak{a}, \triangleleft, \triangleright)$. We show that there is a one-to-one correspondence between the equivalence classes of infinitesimal deformations of $(\mathfrak{a}, \triangleleft, \triangleright)$ and the second cohomology group with coefficients in the adjoint representation (cf. Theorem \ref{inf-def-thm} and Remark \ref{inf-def-rmk}). A pre-Leibniz algebra is said to be rigid if any formal deformation is trivial. We show that the vanishing of the second cohomology group implies that the pre-Leibniz algebra is rigid (cf. Theorem \ref{rigid-thm}, Remark \ref{rigid-rmk}). Finally, given a finite order deformation of a pre-Leibniz algebra, we associate a third cohomology class (called the obstruction class) which is the obstruction to extend the given deformation to the next order (cf. Theorem \ref{obs-thm}).

\medskip

The notion of Leibniz$_\infty$-algebras (homotopy Leibniz algebras) was first introduced by Ammar and Poncin \cite{ammar-poncin} as the homotopy analogue of Leibniz algebras. See \cite{chen-sti-xu,khuda-poncin-qiu,uchi} for more on Leibniz$_\infty$-algebras. In this paper, we introduce pre-Leibniz$_\infty$-algebras as the homotopy version of pre-Leibniz algebras. We show that a pre-Leibniz$_\infty$-algebra induces a Leibniz$_\infty$-algebra structure on the underlying graded vector space (cf. Theorem \ref{pl-l-inf}). This generalizes the construction of the total Leibniz algebra associated with a pre-Leibniz algebra. We introduce Rota-Baxter operators on Leibniz$_\infty$-algebras that gives rise to a pre-Leibniz$_\infty$-algebras (cf. Theorem \ref{rota-pl-inf}). Next, we focus on pre-Leibniz$_\infty$-algebras whose underlying graded vector space is concentrated in degree $-1$ and $0$. Some particular classes of such homotopy algebras are given by `skeletal' and `strict' pre-Leibniz$_\infty$-algebras. We show that skeletal pre-Leibniz$_\infty$-algebras are classified by third cocycles of pre-Leibniz algebras (cf. Theorem \ref{thm-skeletal}). Finally, we introduce crossed module of pre-Leibniz algebras and show that strict pre-Leibniz$_\infty$-algebras are in one-to-one correspondence with crossed module of pre-Leibniz algebras (cf. Theorem \ref{thm-strict}).

\medskip

The paper is organized as follows. In Section \ref{sec-2}, we recall Leibniz algebras, their cohomology and Balavoine's graded lie algebra. Pre-Leibniz algebras are considered and a new graded Lie algebra associated to a vector space $\mathfrak{a}$ is constructed in Section \ref{sec-3}. The Maurer-Cartan elements of this graded Lie algebra correspond to pre-Leibniz algebra structures on $\mathfrak{a}$. In Section \ref{sec-4}, we introduce representations of pre-Leibniz algebras and define cohomology with coefficients in a representation. Deformation theory of pre-Leibniz algebras is extensively studied in Section \ref{sec-5} in terms of cohomology theory. Finally, we introduce pre-Leibniz$_\infty$-algebras, their relation with Leibniz$_\infty$-algebras, and classification of skeletal and strict pre-Leibniz$_\infty$-algebras are given in Section \ref{sec-6}.

\medskip

All vector spaces, (multi)linear maps and tensor products are over a field ${\bf k}$ of characteristic $0$.








\section{Leibniz algebras and the Loday-Pirashvili cohomology}\label{sec-2}
In this preliminary section, we recall Leibniz algebras, their representations and the Loday-Pirashvili cohomology of Leibniz algebras. Our main references are \cite{bala-operad,loday-une,loday-pira}.

\begin{defn}\label{defn-leib}
A (left) Leibniz algebra is a pair $(\mathfrak{g}, [~,~])$ consists of a vector space $\mathfrak{g}$ and a bilinear bracket $[~,~] : \mathfrak{g} \otimes \mathfrak{g} \rightarrow \mathfrak{g}$ satisfying
\begin{align}\label{leib-iden}
[x, [y, z]] = [[x,y], z] + [y, [x,z]], ~\text{ for } x, y, z \in \mathfrak{g}.
\end{align}
Sometimes we denote a Leibniz algebra simply by $\mathfrak{g}$ when the bracket is clear from the context.
\end{defn}

\begin{remark}
Note that the identity (\ref{leib-iden}) is equivalent to to say that the left multiplications $[x, -] : \mathfrak{g} \rightarrow \mathfrak{g}$ are derivations for the bracket. This is the reason the Leibniz algebras of Definition \ref{defn-leib} are called left Leibniz algebras. Similarly, one has right Leibniz algebras as a pair $(\mathfrak{g}, [~,~])$ for which the right multiplications $[-,x] : \mathfrak{g} \rightarrow \mathfrak{g}$ are derivations for the bracket. It is easy to see that if $(\mathfrak{g}, [~,~])$ is a left Leibniz algebra then $(\mathfrak{g}, [~,~]^{op})$ is a right Leibniz algebra and vice versa, where $[x,y]^{op} = [y,x]$, for $x, y \in \mathfrak{g}$. Therefore, any results for left Leibniz algebras can be easily generalized to right Leibniz algebras without much hard work. Throughout the paper, by a Leibniz algebra, we shall always mean a left Leibniz algebra.
\end{remark}

\begin{defn}
Let $(\mathfrak{g}, [~,~])$ be a Leibniz algebra. A representation of it consists of a triple $(M, \rho^L, \rho^R)$ in which $M$ is a vector space and $\rho^L : \mathfrak{g} \otimes M \rightarrow M$, $\rho^R : M \otimes \mathfrak{g} \rightarrow M$ are bilinear maps (called left and right $\mathfrak{g}$-actions) satisfying for $x, y \in \mathfrak{g}$, $u \in M$, 
\begin{align}
\rho^L (x , \rho^L (y, u)) =~& \rho^L ([x,y], u) + \rho^L (y, \rho^L (x, u)), \\
\rho^L (x, \rho^R (u, y)) =~& \rho^R (\rho^L (x, u), y) + \rho^R (u, [x,y]),\\
\rho^R (u, [x,y]) =~& \rho^R (\rho^R (u,x), y) + \rho^L (x, \rho^R (u, y)). 
\end{align} 
\end{defn}

We may also denote a representation $(M, \rho^L, \rho^R)$ simply by $M$ when the left and right $\mathfrak{g}$-actions are clear.
Let $(\mathfrak{g}, [~,~])$ be a Leibniz algebra. Then $\mathfrak{g}$ is a representation of itself with both left and right $\mathfrak{g}$-actions are given by the Leibniz bracket, i.e. $\rho^L (x, y) = \rho^R (x,y) = [x,y]$, for $x, y \in \mathfrak{g}$. This is called the adjoint representation.


\medskip

Let $(\mathfrak{g}, [~,~])$ be a Leibniz algebra and $(M, \rho^L, \rho^R)$ be a representation of it. The Loday-Pirashvili cochain complex $\{ C^\ast_{\mathsf{LP}} (\mathfrak{g}, M), \delta_{\mathsf{LP}} \}$ is given by the following: the $n$-th cochain group $C^n_{\mathsf{LP}} (\mathfrak{g}, M) = \mathrm{Hom}(\mathfrak{g}^{\otimes n}, M) $ and the coboundary map $\delta_{\mathsf{LP}} : C^n_{\mathsf{LP}} (\mathfrak{g}, M) \rightarrow C^{n+1}_{\mathsf{LP}} (\mathfrak{g}, M)$ given by
\begin{align}\label{lp-coboundary}
(\delta_{\mathsf{LP}} f ) (x_1, \ldots, x_{n+1}) =~& \sum_{i=1}^n (-1)^{i+1}~ \rho^L \big( x_i, f(x_1, \ldots, \widehat{x_i}, \ldots, x_{n+1}) \big) + (-1)^{n+1} ~\rho^R \big( f (x_1, \ldots, x_n) , x_{n+1} \big) \\
&+ \sum_{1 \leq i < j \leq n+1} (-1)^i ~ f ( x_1, \ldots, \widehat{x_i}, \ldots, x_{j-1}, [x_i, x_j], x_{j+1}, \ldots, x_{n+1} ), \nonumber
\end{align}
for $x_1, \ldots, x_{n+1} \in \mathfrak{g}$. The cohomology groups of the cochain complex $\{ C^\ast_{\mathsf{LP}} (\mathfrak{g}, M), \delta_{\mathsf{LP}} \}$
are called the Loday-Pirashvili cohomology of the Leibniz algebra $\mathfrak{g}$ with coefficients in the representation $M$, and they are denoted by $H^\ast_{\mathsf{LP}} (\mathfrak{g}, M)$.

Next we recall the graded Lie algebra which characterize Leibniz algebras as Maurer-Cartan elements \cite{bala-operad}. Let $\mathfrak{g}$ be a vector space, not necessarily a Leibniz algebra. Then the graded space $C^\ast_{\mathsf{LP}} (\mathfrak{g}, \mathfrak{g}) = \oplus_{n \geq 1} C^n_{\mathsf{LP}} (\mathfrak{g}, \mathfrak{g}) =  \oplus_{n \geq 1} \mathrm{Hom} ( \mathfrak{g}^{\otimes n }, \mathfrak{g})$ of multilinear maps on $\mathfrak{g}$ carries a degree $-1$ graded Lie bracket (called the Balavoine bracket)
\begin{align}
\llbracket f, g \rrbracket_\mathsf{B} := \sum_{i =1}^m (-1)^{(i-1)(n-1)}~ f \circ_i g  ~-~(-1)^{(m-1)(n-1)} \sum_{i =1}^n (-1)^{(i-1)(m-1)}~ g \circ_i f,
\end{align}
for $f \in C^m_{\mathsf{LP}} (\mathfrak{g}, \mathfrak{g})$ and $g \in C^n_{\mathsf{LP}} (\mathfrak{g}, \mathfrak{g})$, where
\begin{align}
(f \circ_i g ) & ( x_1, \ldots, x_{m+n-1}) \\
&=\sum_{\sigma \in Sh (i-1,n-1)} (-1)^\sigma ~ f (x_{\sigma (1)}, \ldots, x_{\sigma (i-1)}, g (x_{\sigma (i)}, \ldots, x_{\sigma (i+n-2)}, x_{i+n-1}), \ldots, x_{m+n-1}). \nonumber
\end{align}
In other words, $( C^{\ast + 1 }_{\mathsf{LP}} (\mathfrak{g}, \mathfrak{g}), \llbracket ~,~ \rrbracket_\mathsf{B} )$ is a graded Lie algebra, called the Balavoine's graded Lie algebra.

Let $(\mathfrak{g}, [~,~])$ be a Leibniz algebra. Let $\mu \in C^2_{\mathsf{LP}} (\mathfrak{g}, \mathfrak{g})$ denotes the element corresponding to the bracket $[~,~]$, i.e., $\mu (x, y) = [x, y]$, for $x, y \in \mathfrak{g}$. Then the Leibniz identity (\ref{leib-iden}) is equivalent to $\llbracket \mu, \mu \rrbracket_\mathsf{B} = 0$. In fact, we have the following.

\begin{prop}
Let $\mathfrak{g}$ be a vector space. Then there is a one-to-one correspondence between Leibniz algebra structures on $\mathfrak{g}$ and Maurer-Cartan elements in the graded Lie algebra $( C^{\ast + 1 }_{\mathsf{LP}} (\mathfrak{g}, \mathfrak{g}), \llbracket ~,~ \rrbracket_\mathsf{B} )$.
\end{prop}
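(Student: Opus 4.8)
The plan is to unwind both sides of the claimed correspondence and check that they match at the level of identities. Given a vector space $\mathfrak{g}$, a Maurer-Cartan element in $(C^{\ast+1}_{\mathsf{LP}}(\mathfrak{g},\mathfrak{g}), \llbracket~,~\rrbracket_\mathsf{B})$ is by definition a degree $1$ element of the shifted graded Lie algebra, i.e.\ an element $\mu \in C^2_{\mathsf{LP}}(\mathfrak{g},\mathfrak{g}) = \mathrm{Hom}(\mathfrak{g}^{\otimes 2},\mathfrak{g})$, satisfying the Maurer-Cartan equation $\llbracket \mu, \mu \rrbracket_\mathsf{B} = 0$. Such a $\mu$ is exactly the data of a bilinear bracket $[x,y] := \mu(x,y)$ on $\mathfrak{g}$, and conversely any bilinear bracket arises this way; so the assignment $\mu \leftrightarrow [~,~]$ is a bijection between $C^2_{\mathsf{LP}}(\mathfrak{g},\mathfrak{g})$ and the set of bilinear operations on $\mathfrak{g}$. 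The content of the proposition is therefore the assertion that, under this bijection, the Maurer-Cartan equation $\llbracket \mu,\mu\rrbracket_\mathsf{B} = 0$ is equivalent to the (left) Leibniz identity \eqref{leib-iden}.

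The key step is thus to compute $\llbracket \mu, \mu \rrbracket_\mathsf{B}$ explicitly and identify it with (a multiple of) the Leibniz Jacobiator. First I would specialize the general formula for the Balavoine bracket to $m = n = 2$: here $\llbracket \mu, \mu \rrbracket_\mathsf{B} = 2 \sum_{i=1}^{2} (-1)^{i-1}\, \mu \circ_i \mu$ (the two terms of the bracket coincide up to sign when $f = g$ and $m = n$), so $\llbracket \mu,\mu\rrbracket_\mathsf{B} = 2(\mu \circ_1 \mu - \mu \circ_2 \mu)$. Then I would expand each $\mu \circ_i \mu$ using the definition of $\circ_i$, summing over shuffles in $Sh(i-1,1)$: for $i=1$ the shuffle set $Sh(0,1)$ is trivial, and for $i=2$ the set $Sh(1,1)$ contributes the identity and one transposition with a sign. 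Evaluating on $(x,y,z) \in \mathfrak{g}^{\otimes 3}$ and collecting terms, $\mu\circ_1\mu\,(x,y,z)$ produces $[[x,y],z]$ while $\mu\circ_2\mu\,(x,y,z)$ produces $[x,[y,z]] - [y,[x,z]]$ (the transposition swapping the first two slots accounts for the minus sign and the interchange of $x$ and $y$). Hence $\tfrac12\llbracket\mu,\mu\rrbracket_\mathsf{B}(x,y,z) = [[x,y],z] - [x,[y,z]] + [y,[x,z]]$, which vanishes for all $x,y,z$ precisely when \eqref{leib-iden} holds.

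Putting the two halves together: the bijection $\mu \leftrightarrow [~,~]$ restricts to a bijection between those $\mu$ with $\llbracket\mu,\mu\rrbracket_\mathsf{B} = 0$ (the Maurer-Cartan elements) and those brackets $[~,~]$ satisfying the Leibniz identity (the Leibniz algebra structures), which is the desired one-to-one correspondence. I expect the only real obstacle to be bookkeeping: getting the shuffle signs $(-1)^\sigma$ and the prefactor signs $(-1)^{(i-1)(n-1)}$ in the Balavoine bracket exactly right so that the three terms assemble into the Leibniz Jacobiator with consistent signs (rather than, say, the right-Leibniz or a sign-twisted version). This is a routine but sign-sensitive computation; once it is carried out the proposition follows immediately, and indeed the excerpt already notes that \eqref{leib-iden} is equivalent to $\llbracket\mu,\mu\rrbracket_\mathsf{B} = 0$, so I would simply present the above expansion as the verification of that equivalence and then read off the correspondence.
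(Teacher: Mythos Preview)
Your proposal is correct and follows exactly the approach the paper indicates: the paper does not give a detailed proof of this proposition but simply remarks that the Leibniz identity \eqref{leib-iden} is equivalent to $\llbracket \mu,\mu\rrbracket_\mathsf{B}=0$ and states the proposition as a consequence. Your expansion of $\llbracket\mu,\mu\rrbracket_\mathsf{B}=2(\mu\circ_1\mu-\mu\circ_2\mu)$ and the identification of $\tfrac12\llbracket\mu,\mu\rrbracket_\mathsf{B}(x,y,z)$ with the Leibniz Jacobiator $[[x,y],z]-[x,[y,z]]+[y,[x,z]]$ is precisely the verification the paper leaves to the reader, and your sign bookkeeping is accurate.
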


With the above notation, the coboundary map (\ref{lp-coboundary}) of the Leibniz algebra $\mathfrak{g}$ with coefficients in itself is given by
$\delta_{\mathsf{LP}} f = (-1)^{n-1} \llbracket \mu, f \rrbracket_\mathsf{B}, ~ \text{ for } f \in C^n_{\mathsf{LP}} (\mathfrak{g}, \mathfrak{g}).$

\section{Pre-Leibniz algebras and their Maurer-Cartan characterizations}\label{sec-3}
The notion of a pre-Leibniz algebra first appeared in the recent article \cite{sheng-pl} where it was named as Leibniz-dendriform algebra. In this section, we first {recall pre-Leibniz algebras} and their relation with relative Rota-Baxter operators. Next, we construct a new graded Lie algebra associated with a vector space, whose Maurer-Cartan elements are precisely pre-Leibniz algebra structures on that vector space.


\begin{defn} \label{pl-defn} \cite{sheng-pl}
A (left) pre-Leibniz algebra is a triple $(\mathfrak{a}, \triangleleft, \triangleright)$ consisting of a vector space $\mathfrak{a}$ together with two bilinear operations $\triangleleft, \triangleright : \mathfrak{a} \otimes \mathfrak{a} \rightarrow \mathfrak{a}$ satisfying the following identities 
\begin{align}
x \triangleleft ( y \triangleleft z + y \triangleright z ) =~& ( x \triangleleft y ) \triangleleft z + y \triangleright (x \triangleleft z ), \label{pl-iden1}\\
x \triangleright ( y \triangleleft z ) =~& ( x \triangleright y ) \triangleleft z + y \triangleleft ( x \triangleleft z + x \triangleright z),\\
x \triangleright ( y \triangleright z) =~& ( x  \triangleleft y + x \triangleright y) \triangleright z + y \triangleright ( x \triangleright z), ~\text{ for } x, y, z \in \mathfrak{a}. \label{pl-iden3}
\end{align}
\end{defn}

\begin{remark}
Let $(\mathfrak{a}, \triangleleft, \triangleright)$ be a pre-Leibniz algebra in which $x\triangleleft y = - y \triangleright x$, for $x, y \in \mathfrak{a}$ (we call such pre-Leibniz algebras `skew-symmetric'). In this case, all the three identities (\ref{pl-iden1})-(\ref{pl-iden3}) are equivalent to
\begin{align*}
( x \triangleleft y ) \triangleleft z - x \triangleleft (y \triangleleft z) = (x \triangleleft z) \triangleleft y - x \triangleleft (z \triangleleft y), ~ \text{ for } x, y, z \in \mathfrak{a}.
\end{align*} 
In other words, $(\mathfrak{a}, \triangleleft)$ is a pre-Lie algebra \cite{gers2}. Therefore, skew-symmetric pre-Leibniz algebras are equivalent to pre-Lie algebras. In this regard, one can think pre-Leibniz algebras as the noncommutative analogue of pre-Lie algebras.
\end{remark}

\begin{defn}
Let $(\mathfrak{a}, \triangleleft, \triangleright)$ and $(\mathfrak{a}', \triangleleft', \triangleright')$ be two pre-Leibniz algebras. A morphism of pre-Leibniz algebras from $(\mathfrak{a}, \triangleleft, \triangleright)$ to $(\mathfrak{a}', \triangleleft', \triangleright')$ is a linear map $\phi : \mathfrak{a} \rightarrow \mathfrak{a}'$ satisfying $\phi (x \triangleleft y) = \phi (x) \triangleleft' \phi (y)$ and $\phi (x \triangleright y) = \phi(x) \triangleright' \phi (y)$, for $x, y \in \mathfrak{a}.$
\end{defn}

Pre-Leibniz algebras are splitting of Leibniz algebras in the following sense \cite{sheng-pl}.

\begin{prop}\label{pre-leib-sum} Let $(\mathfrak{a}, \triangleleft, \triangleright)$ be a pre-Leibniz algebra. Then $(\mathfrak{a}, [~,~]_{\triangleleft, \triangleright})$ is a Leibniz algebra, where 
\begin{align*}
[x,y]_{\triangleleft, \triangleright}:= x \triangleleft y + x \triangleright y, \text{ for } x, y \in \mathfrak{a}.
\end{align*}
We call this the total Leibniz algebra and denoted by $\mathfrak{a}_\mathrm{Tot}$.
\end{prop}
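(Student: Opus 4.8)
The plan is to verify directly that the bracket $[x,y]_{\triangleleft,\triangleright} := x \triangleleft y + x \triangleright y$ satisfies the Leibniz identity \eqref{leib-iden}, by expanding both sides in terms of the operations $\triangleleft$ and $\triangleright$ and using the three defining identities \eqref{pl-iden1}--\eqref{pl-iden3} of a pre-Leibniz algebra. First I would expand the left-hand side $[x,[y,z]_{\triangleleft,\triangleright}]_{\triangleleft,\triangleright}$, which produces the four terms $x \triangleleft (y \triangleleft z) + x \triangleleft (y \triangleright z) + x \triangleright (y \triangleleft z) + x \triangleright (y \triangleright z)$. Next I would expand the right-hand side $[[x,y]_{\triangleleft,\triangleright}, z]_{\triangleleft,\triangleright} + [y,[x,z]_{\triangleleft,\triangleright}]_{\triangleleft,\triangleright}$, which yields $(x\triangleleft y)\triangleleft z + (x\triangleright y)\triangleleft z + (x\triangleleft y)\triangleright z + (x\triangleright y)\triangleright z + y\triangleleft(x\triangleleft z) + y\triangleleft(x\triangleright z) + y\triangleright(x\triangleleft z) + y\triangleright(x\triangleright z)$.

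The key observation is that the three pre-Leibniz identities, after grouping the left-hand side of \eqref{pl-iden1} as $x\triangleleft(y\triangleleft z) + x\triangleleft(y\triangleright z)$, decompose the problem cleanly: identity \eqref{pl-iden1} rewrites the two $\triangleleft$-leading terms on the LHS as $(x\triangleleft y)\triangleleft z + y\triangleright(x\triangleleft z)$; identity \eqref{pl-iden3} rewrites $x\triangleright(y\triangleright z)$ as $(x\triangleleft y)\triangleright z + (x\triangleright y)\triangleright z + y\triangleright(x\triangleright z)$; and identity \eqref{pl-iden2}, namely $x\triangleright(y\triangleleft z) = (x\triangleright y)\triangleleft z + y\triangleleft(x\triangleleft z) + y\triangleleft(x\triangleright z)$, handles the remaining term $x\triangleright(y\triangleleft z)$. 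Adding these three rewritten expressions gives exactly the eight terms of the right-hand side, so the identity holds. I would carry out this bookkeeping term by term, checking that each of the eight RHS terms is accounted for exactly once.

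Alternatively, and perhaps more conceptually, one could note that this statement is already recorded in \cite{sheng-pl}, so it suffices to cite it; but giving the short direct check makes the paper self-contained. The only mild obstacle is purely organizational: keeping track of the signs (there are none here, since the operations carry no grading) and ensuring the correct correspondence between the grouped left-hand sides of the pre-Leibniz axioms and the summands produced by expanding the Leibniz identity. Since all terms appear with coefficient $+1$, no cancellation subtleties arise, and the verification is a routine matching of monomials.
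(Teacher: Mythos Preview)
Your proposal is correct, and indeed the paper itself does not supply a proof of this proposition at all: it is simply attributed to \cite{sheng-pl} and stated without argument. Your direct verification---adding the three pre-Leibniz identities (\ref{pl-iden1}), the unnumbered middle identity, and (\ref{pl-iden3}) to recover the Leibniz identity for the summed bracket---is exactly the natural (and essentially the only) way to prove this, and your bookkeeping of the eight terms is accurate.
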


A class of pre-Leibniz algebras arise from (relative) Rota-Baxter operators in the Leibniz algebra context.

\begin{defn}\label{defi-rel-rota}
Let $(\mathfrak{g}, [~,~])$ be a Leibniz algebra and $(M, \rho^L, \rho^R)$ be a representation of it.
A relative Rota-Baxter operator on $M$ over the Leibniz algebra $\mathfrak{g}$ is a linear map $T : M \rightarrow \mathfrak{g}$ satisfying
\begin{align*}
[Tu, Tv] = T ( \rho^R ( u, Tv) + \rho^L (Tu, v) ), ~\text{ for } u, v \in M.
\end{align*}
A Rota-Baxter operator on a Leibniz algebra $\mathfrak{g}$ is a relative Rota-Baxter operator on $\mathfrak{g}$ (considered as the adjoint representation space) over the Leibniz algebra $\mathfrak{g}$.
\end{defn}

\begin{prop}
Let $T : M \rightarrow \mathfrak{g}$ be a relative Rota-Baxter operator on $M$ over the Leibniz algebra $\mathfrak{g}$. Then $(M, \triangleleft, \triangleright)$ is a pre-Leibniz algebra, where
\begin{align*}
u \triangleleft v : = \rho^R ( u, Tv) ~~~ \text{ and } ~~~ u \triangleright v := \rho^L (Tu, v), ~ \text{ for } u, v \in M.
\end{align*}
Therefore, $(M, [~,~]_T)$ is a Leibniz algebra, where $[u, v]_T := \rho^R ( u, Tv) + \rho^L (Tu, v)$, for $u, v \in M$.
\end{prop}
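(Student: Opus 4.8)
The plan is to verify directly the three defining identities of a pre-Leibniz algebra (Definition \ref{pl-defn}) for the pair of operations $\triangleleft, \triangleright$ on $M$, and then to obtain the last assertion as an instance of Proposition \ref{pre-leib-sum}. The single fact that organizes the whole computation is that the relative Rota-Baxter condition can be read as: for all $u, v \in M$,
\[
T(u \triangleleft v + u \triangleright v) = T\big( \rho^R(u, Tv) + \rho^L(Tu, v)\big) = [Tu, Tv].
\]
Hence, in each pre-Leibniz identity, every occurrence of a ``combined'' product $w \triangleleft w' + w \triangleright w'$ sitting inside another operation produces, after applying the relevant $T$, the Leibniz bracket $[Tw, Tw']$ in $\mathfrak{g}$.

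First I would treat the identity (\ref{pl-iden1}). Its left-hand side expands to $x \triangleleft (y\triangleleft z + y \triangleright z) = \rho^R\big(x, T(y\triangleleft z + y\triangleright z)\big) = \rho^R(x, [Ty, Tz])$, while its right-hand side expands to $\rho^R(\rho^R(x, Ty), Tz) + \rho^L(Ty, \rho^R(x, Tz))$. These agree by the third defining axiom of the representation $(M, \rho^L, \rho^R)$, applied with the pair $Ty, Tz \in \mathfrak{g}$ and with $x \in M$ in the module slot.

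The remaining two identities are handled the same way. For (\ref{pl-iden3}), the left-hand side is $\rho^L(Tx, \rho^L(Ty, z))$ and the right-hand side becomes $\rho^L([Tx,Ty], z) + \rho^L(Ty, \rho^L(Tx, z))$ after using the Rota-Baxter identity on the term $x\triangleleft y + x\triangleright y$; equality is exactly the first representation axiom with arguments $Tx, Ty \in \mathfrak{g}$. For the middle identity, the left-hand side is $\rho^L(Tx, \rho^R(y, Tz))$ and the right-hand side becomes $\rho^R(\rho^L(Tx, y), Tz) + \rho^R(y, [Tx, Tz])$, which is the second representation axiom with $Tx, Tz \in \mathfrak{g}$. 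This establishes that $(M, \triangleleft, \triangleright)$ is a pre-Leibniz algebra, and then Proposition \ref{pre-leib-sum} gives that its total Leibniz algebra is $(M, [~,~]_T)$ with $[u,v]_T = u\triangleleft v + u \triangleright v = \rho^R(u, Tv) + \rho^L(Tu, v)$.

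I do not expect any genuine obstacle: the argument is a bookkeeping verification in which each of the three pre-Leibniz identities corresponds, after a single application of the relative Rota-Baxter relation, to exactly one of the three representation axioms. The only point requiring care is matching the order of the entries $Tx, Ty, Tz$ and the module argument correctly, since neither the pre-Leibniz identities nor the representation axioms are symmetric in their inputs.
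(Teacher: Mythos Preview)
Your proof is correct: each of the three pre-Leibniz identities reduces, after one application of the relative Rota-Baxter relation $T(u\triangleleft v + u\triangleright v) = [Tu,Tv]$, to exactly one of the three representation axioms, and the final Leibniz statement follows from Proposition \ref{pre-leib-sum}. The paper itself does not supply a proof of this proposition---it is stated as a known result from \cite{sheng-pl}---so there is nothing further to compare; your direct verification is the natural argument.
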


In the previous proposition, we show that a relative Rota-Baxter operator on the representation space over a Leibniz algebra induces a pre-Leibniz algebra structure. We will now consider the converse. Let $(\mathfrak{a}, \triangleleft, \triangleright)$ be a pre-Leibniz algebra. Take the {total Leibniz algebra} $\mathfrak{a}_\mathrm{Tot} = (\mathfrak{a}, [~,~]_{\triangleleft, \triangleright})$ given in Proposition \ref{pre-leib-sum}. It can be checked that the total Leibniz algebra $\mathfrak{a}_\mathrm{Tot}$ has a representation on the vector space $\mathfrak{a}$ with left and right $\mathfrak{a}_\mathrm{Tot}$-actions
\begin{align*}
\varrho^L (u, v) = u \triangleright v ~~~ \text{ and } ~~~ \varrho^R (v, u) = v \triangleleft u, ~ \text{for } u \in \mathfrak{a}_\mathrm{Tot}, v \in \mathfrak{a}.
\end{align*}
With this notation, the identity map $\mathrm{id} : \mathfrak{a} \rightarrow \mathfrak{a}_\mathrm{Tot}$ is a relative Rota-Baxter operator on $\mathfrak{a}$ over the total Leibniz algebra $\mathfrak{a}_\mathrm{Tot}$. Moreover, the induced pre-Leibniz algebra structure on $\mathfrak{a}$ coincides with the given one.


\medskip

\noindent {\bf A new graded Lie algebra.} Given a vector space, we construct a graded Lie algebra whose Maurer-Cartan elements correspond to pre-Leibniz structures on the vector space. 

Let $C_n$ be the set of first $n$ natural numbers. For some convenience, we will denote the elements of $C_n$ as certain symbols, and write $C_n = \{ [1], [2], \ldots, [n] \}$, for $n \geq 1$. Let $m , n \geq 1$ and $1 \leq i \leq m$. For any $\sigma \in Sh (i-1, n-1)$, we put the elements of $C_{m+n-1}$ into $m$ many boxes as follows

\begin{align*}
\framebox{$[\sigma(1)]$} \quad \framebox{$[\sigma(2)]$} \quad \cdots \quad \framebox{$[\sigma(i-1)]$} \quad \framebox{$[\sigma(i)]$\quad$\cdots$ $[\sigma(i+n-2)]$ ~ [i+n-1]} \quad \framebox{[i+n]} \quad \cdots \quad \framebox{[m+n-1]}~.
\end{align*}

\medskip

\medskip

\noindent Note that, we put exactly one element in each of the first $(i-1)$ boxes, put $n$ elements in the $i$-th box, and exactly one element in each of the last $(m-i)$ many boxes. With the above notations, we will now define maps
\begin{align}\label{comb-maps}
R^\sigma_{m; i , n} : C_{m+n-1} \rightarrow C_m ~~~~ \text{ and } ~~~~ S^\sigma_{m; i , n} : C_{m+n-1} \rightarrow {\bf k}[C_n]
\end{align}
as follows:
\begin{align*}
R^\sigma_{m; i , n}  ([r]) =~& [k] \in C_m  \qquad \text{ if } [r] \text{ appears in the }k\text{-th box},\\
S^\sigma_{m; i , n}  ([r]) =~& \begin{cases} [1] + \cdots + [n] \qquad \text{ if } [r] \text{ does not appears in the }i\text{-th box}, \\
[j]  \qquad \text{ if } [r] \text{ appears in the }i\text{-th box and in the }j\text{-th position}. 
\end{cases}
\end{align*}



\medskip

Let $\mathfrak{a}$ be a vector space. For any $n \geq 1$, we consider the space 
\begin{align*}
C^n_\mathsf{pL} ( \mathfrak{a}, \mathfrak{a}) := \mathrm{Hom}( \mathbf{k}[C_n] \otimes \mathfrak{a}^{\otimes n}, \mathfrak{a}).
\end{align*}
For any $f \in C^m_\mathsf{pL} ( \mathfrak{a}, \mathfrak{a})$, $g \in C^n_\mathsf{pL} ( \mathfrak{a}, \mathfrak{a})$ and $1 \leq i \leq m$, we define an element $f \diamond_i g \in C^{m+n-1}_\mathsf{pL} ( \mathfrak{a}, \mathfrak{a})$ by
\begin{align*}
&(f \diamond_i g ) ([r]; x_1, \ldots, x_{m+n-1}) := \\
&\sum_{\sigma \in Sh (i-1,n-1)} (-1)^\sigma ~ f ( R^\sigma_{m;i,n} [r]; x_{\sigma (1)}, \ldots, x_{\sigma (i-1)}, g (S^\sigma_{m;i,n} [r]; x_{\sigma (i)}, \ldots, x_{\sigma (i+n-2)}, x_{i+n-1}), \ldots, x_{m+n-1}),
\end{align*}
for $[r] \in C_{m+n-1}$ and $x_1, \ldots, x_{m+n-1} \in \mathfrak{a}$.  We define a bracket
\begin{align*}
\llbracket ~,~ \rrbracket_\mathsf{pL} : C^m_\mathsf{pL} ( \mathfrak{a}, \mathfrak{a}) \otimes C^n_\mathsf{pL} ( \mathfrak{a}, \mathfrak{a}) \rightarrow C^{m+n-1}_\mathsf{pL} ( \mathfrak{a}, \mathfrak{a}), \text{ for } m, n \geq 1
\end{align*}
\begin{align}\label{pl-bracket}
\llbracket f, g \rrbracket_\mathsf{pL} := \sum_{i=1}^m (-1)^{(i-1)(n-1)}~ f \diamond_i g  ~- (-1)^{(m-1)(n-1)} \sum_{i=1}^n (-1)^{(i-1)(m-1)}~ g \diamond_i f.
\end{align}

\begin{thm}\label{mc-pl-char}
With the above notations, we have the followings:
\begin{itemize}
\item[(i)] $(C^\ast_{\mathsf{pL}} (\mathfrak{a}, \mathfrak{a}), \llbracket ~, ~ \rrbracket )$ is a degree $-1$ graded Lie algebra. In other words, $(C^{\ast + 1}_{\mathsf{pL}} (\mathfrak{a}, \mathfrak{a}), \llbracket ~, ~ \rrbracket )$ is a graded Lie algebra.
\item[(ii)] There is a one-to-one correspondence between pre-Leibniz algebra structures on $\mathfrak{a}$ and Maurer-Cartan elements in the graded Lie algebra $(C^{\ast + 1}_{\mathsf{pL}} (\mathfrak{a}, \mathfrak{a}), \llbracket ~, ~ \rrbracket )$.
\end{itemize}
\end{thm}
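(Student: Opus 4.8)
The plan is to model the construction on the proof that Balavoine's bracket is a graded Lie algebra, lifting everything through the combinatorial bookkeeping maps $R^\sigma_{m;i,n}$ and $S^\sigma_{m;i,n}$. First I would record the basic "associativity-type" identities satisfied by the $\diamond_i$ operations. Concretely, for $f \in C^m_{\mathsf{pL}}$, $g \in C^n_{\mathsf{pL}}$, $h \in C^p_{\mathsf{pL}}$ one expects relations of the pre-Lie (or rather "right-symmetric up to shuffles") type: for $1 \le i \le m$ and appropriate $j$,
\[
(f \diamond_i g) \diamond_j h =
\begin{cases}
(f \diamond_j h) \diamond_{i+p-1} g & \text{if } j < i,\\
f \diamond_i (g \diamond_{j-i+1} h) & \text{if } i \le j \le i+n-1,
\end{cases}
\]
up to the sign $(-1)^{(n-1)(p-1)}$ in the first case, exactly as for the $\circ_i$ of Balavoine. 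The point is that the extra data $[r] \in C_{m+n-1}$ is tracked compatibly: one must check the cocycle-type identities $R^{\sigma'}_{m;i,n} \circ (\text{insert } R,S \text{ of the inner composite}) $ reproduces the iterated $R$, and similarly that $S^\sigma$ composes correctly, so that the "box" assigned to each argument after a double composition is independent of the order in which the two compositions were performed. This is the combinatorial heart of the matter and I expect it to be the main obstacle: verifying that the box-placement maps satisfy the required sheaf-like compatibility (equivalently, that the partition-refinement picture behind the boxes is coherent), and that shuffle permutations and their signs match up across the two sides.

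Granting those $\diamond_i$-identities, part (i) is formal: the Jacobi identity for $\llbracket -, -\rrbracket_{\mathsf{pL}}$ follows from them by exactly the same sign-juggling argument used for any partial-composition (pre-Lie) system, and graded antisymmetry of degree $-1$ (i.e. $\llbracket g, f\rrbracket_{\mathsf{pL}} = -(-1)^{(m-1)(n-1)} \llbracket f, g\rrbracket_{\mathsf{pL}}$) is immediate from the definition \eqref{pl-bracket}. So after setting up the $\diamond_i$ relations, I would simply invoke the standard lemma that a graded pre-Lie structure induces a graded Lie bracket via the graded commutator, with the degree shift built in so that an element of $C^{\ast+1}_{\mathsf{pL}}$ of "pL-degree $n$" sits in Lie-degree $n-1$.

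For part (ii), let $\pi \in C^2_{\mathsf{pL}}(\mathfrak{a},\mathfrak{a}) = \operatorname{Hom}(\mathbf{k}[C_2]\otimes \mathfrak{a}^{\otimes 2}, \mathfrak{a})$ encode a pair of binary operations via $\pi([1]; x, y) = x \triangleright y$ and $\pi([2]; x, y) = x \triangleleft y$ (the labelling chosen so that the $S$-map splits off the "inner" operation correctly). A Maurer-Cartan element is an element of Lie-degree $1$, i.e. $\pi \in C^2_{\mathsf{pL}}$, satisfying $\llbracket \pi, \pi\rrbracket_{\mathsf{pL}} = 0$. Then I would compute $\llbracket \pi, \pi\rrbracket_{\mathsf{pL}} \in C^3_{\mathsf{pL}}(\mathfrak{a},\mathfrak{a}) = \operatorname{Hom}(\mathbf{k}[C_3]\otimes\mathfrak{a}^{\otimes 3},\mathfrak{a})$ explicitly on each of the three generators $[1], [2], [3] \in C_3$, unwinding the definitions of $\diamond_1, \diamond_2$, the shuffles in $Sh(0,1)$ and $Sh(1,0)$, and the maps $R^\sigma_{2;i,2}, S^\sigma_{2;i,2}$. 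The claim is that the vanishing of $\llbracket\pi,\pi\rrbracket_{\mathsf{pL}}([k];x,y,z)$ for $k = 1, 2, 3$ reproduces exactly the three identities \eqref{pl-iden3}, \eqref{pl-iden1} and the middle identity of Definition \ref{pl-defn} respectively (up to relabelling). This last verification is a finite, mechanical computation once the combinatorial maps are pinned down; the only care needed is to confirm that the three "slots" in $C_3$ correspond bijectively to the three defining axioms, which is really where the design of the $R$ and $S$ maps is being used.
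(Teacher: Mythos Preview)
Your approach to part (ii) matches the paper's: a direct computation of $\llbracket \pi,\pi\rrbracket_{\mathsf{pL}}$ on each label $[1],[2],[3]\in C_3$, identifying the resulting three equations with the three axioms of Definition~\ref{pl-defn}. (Your labelling convention $\pi([1];x,y)=x\triangleright y$, $\pi([2];x,y)=x\triangleleft y$ is opposite to the paper's~(\ref{pre-mc-corr}), which is why you match $[1]$ with~(\ref{pl-iden3}) rather than~(\ref{pl-iden1}); this is harmless as long as you keep it consistent.)

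For part (i), however, the paper takes a quite different and shorter route. Rather than verifying the partial-composition identities for $\diamond_i$ directly, it constructs an injective linear map
\[
D : C^m_{\mathsf{pL}}(\mathfrak{a},\mathfrak{a}) \hookrightarrow C^m_{\mathsf{LP}}(\mathfrak{a}\oplus\mathfrak{a},\,\mathfrak{a}\oplus\mathfrak{a})
\]
into the Balavoine complex of the doubled space $D(\mathfrak{a})=\mathfrak{a}\oplus\mathfrak{a}$, by placing the label $[r]$ as the position of the unique argument lying in the second copy of $\mathfrak{a}$. One then checks the single identity $D(f\diamond_i g)=D(f)\circ_i D(g)$, so that $D\llbracket f,g\rrbracket_{\mathsf{pL}}=\llbracket Df,Dg\rrbracket_{\mathsf{B}}$, and the graded Jacobi identity for $\llbracket\,,\,\rrbracket_{\mathsf{pL}}$ is inherited from the known Jacobi identity for Balavoine's bracket. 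This sidesteps entirely the ``combinatorial heart of the matter'' you flag as the main obstacle: there is no need to prove the sheaf-like coherence of the $R^\sigma,S^\sigma$ maps under iterated composition, since that coherence is packaged into the single statement $D(f\diamond_i g)=D(f)\circ_i D(g)$, which is much easier to check. Your direct approach should also work, and has the virtue of being self-contained, but it is substantially more laborious and the pre-Lie identities you wrote down for $\diamond_i$ would need care, since Balavoine's $\circ_i$ already involve shuffle sums and are not pure nonsymmetric-operad compositions.
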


\begin{proof}
(i) Consider the double vector space $D (\mathfrak{a}) = \mathfrak{a} \oplus \mathfrak{a}$. Then any $f \in C^m_\mathsf{pL} (\mathfrak{a}, \mathfrak{a})$ gives rise to an element $D(f) \in C^m_\mathsf{LP} (D(\mathfrak{a}), D(\mathfrak{a}))$ given by
\begin{align*}
D(f) \big( (x_1,0), \ldots, (x_m, 0)  \big) =~& \sum_{r=1}^m \big(   f ([r]; x_1, \ldots, x_m), 0 \big), \\
D(f) \big( (x_1, 0), \ldots, (0, x_r), \ldots, (x_m, 0) \big) =~& \big( 0, f ([r]; x_1, \ldots, x_m) \big), \\
D(f) \big(  \ldots, (0, x_r), \ldots, (0, x_s), \ldots  \big) =~& 0.
\end{align*}
(It is easy to see that $D(f) = 0$ implies that $f=0$). If $f \in C^m_\mathsf{pL} (\mathfrak{a}, \mathfrak{a})$, $g \in C^n_\mathsf{pL} (\mathfrak{a}, \mathfrak{a})$ and $1 \leq i \leq m$, then {it is straightforward to verify} that $D(f \diamond_i g) = D(f) \circ_i D(g)$. As a consequence, we get that ${D \llbracket f, g \rrbracket_\mathsf{pL} = \llbracket D(f) , D(g) \rrbracket_\mathsf{B}}.$ This shows that $(C^\ast_\mathsf{pL} (\mathfrak{a}, \mathfrak{a}), \llbracket ~, ~ \rrbracket_\mathsf{pL})$ is a degree $-1$ graded Lie algebra as $(C^\ast_\mathsf{LP} (D(\mathfrak{a}), D(\mathfrak{a})), \llbracket ~ , ~ \rrbracket_\mathsf{LP})$ is so.

\medskip

(ii) Suppose there are bilinear maps $\triangleleft, \triangleright : \mathfrak{a} \otimes \mathfrak{a} \rightarrow \mathfrak{a}$. They corresponds to an element $\pi \in C^2_\mathsf{pL}(\mathfrak{a}, \mathfrak{a})$ given by
\begin{align}\label{pre-mc-corr}
\pi ([1]; x, y) = x \triangleleft y ~~~~ \text{ and } ~~~~ \pi ([2]; x, y) = x \triangleright y, ~ \text{ for } x, y \in \mathfrak{a}.
\end{align}
By a direct calculation using the definition, we get that
\begin{align*}
\llbracket \pi, \pi \rrbracket_\mathsf{pL} ([1]; x, y, z) =~& 2 \big( (x \triangleleft y) \triangleleft z - x \triangleleft ( y \triangleleft z + y \triangleright z) + y \triangleright (x \triangleleft z)  \big),\\
\llbracket \pi, \pi \rrbracket_\mathsf{pL} ([2]; x, y, z) =~& 2 \big( (x \triangleright y) \triangleleft z - x \triangleright (x \triangleleft z) + y \triangleleft (x \triangleleft z + x \triangleright z)  \big),\\
\llbracket \pi, \pi \rrbracket_\mathsf{pL} ([3]; x, y, z) =~& 2 \big(   (x \triangleleft y + x \triangleright y) \triangleright z - x \triangleright (y \triangleright z) + y \triangleright (x \triangleright z) \big),
\end{align*}
for $x, y, z \in \mathfrak{a}$. This shows that $(\mathfrak{a}, \triangleleft, \triangleright)$ is a pre-Leibniz algebra if and only if $\llbracket \pi, \pi \rrbracket_\mathsf{pL} = 0$. Hence the result follows.
\end{proof}

Let $(\mathfrak{a}, \triangleleft, \triangleright)$ be a pre-Leibniz algebra with the corresponding Maurer-Cartan element $\pi \in C^2_\mathsf{pL} (\mathfrak{a}, \mathfrak{a})$. Then there is a differential 
\begin{align*}
d_\pi : = \llbracket \pi, - \rrbracket_\mathsf{pL} : C^n_\mathsf{pL} (\mathfrak{a}, \mathfrak{a}) \rightarrow C^{n+1}_\mathsf{pL} (\mathfrak{a}, \mathfrak{a})
\end{align*}
which makes the triple $(C^{\ast + 1}_\mathsf{pL}(\mathfrak{a}, \mathfrak{a}), \llbracket ~, ~\rrbracket_\mathsf{pL}, d_\pi)$ into a differential graded Lie algebra. This differential graded Lie algebra is useful to study Maurer-Cartan deformations of a pre-Leibniz algebra.

\begin{prop}
Let $(\mathfrak{a}, \triangleleft, \triangleright)$ be a pre-Leibniz algebra. Then for any bilinear operations $\triangleleft', \triangleright' : \mathfrak{a} \otimes \mathfrak{a} \rightarrow \mathfrak{a}$, the triple $(\mathfrak{a}, \triangleleft + \triangleleft', \triangleright + \triangleright')$ is a pre-Leibniz algebra if and only if the element $\pi' \in C^2_\mathsf{pL}(\mathfrak{a}, \mathfrak{a} )$ defined by $\pi' ([1]; x, y) = x \triangleleft' y$ and $\pi' ([2]; x, y) = x \triangleright' y$ for $x, y \in \mathfrak{a}$, is a Maurer-Cartan element in the differential graded Lie algebra $(C^{\ast + 1}_\mathsf{pL}(\mathfrak{a}, \mathfrak{a}), \llbracket ~, ~\rrbracket_\mathsf{pL}, d_\pi)$.
\end{prop}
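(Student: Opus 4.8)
The plan is to reduce the statement to a standard fact about Maurer-Cartan elements in a differential graded Lie algebra, namely that if $\pi$ is a Maurer-Cartan element of a graded Lie algebra $(L, \llbracket~,~\rrbracket)$ and $d_\pi = \llbracket \pi, -\rrbracket$ is the twisted differential, then for any $\pi' \in L^1$ the element $\pi + \pi'$ is Maurer-Cartan for $(L, \llbracket~,~\rrbracket)$ if and only if $\pi'$ is Maurer-Cartan for the differential graded Lie algebra $(L, \llbracket~,~\rrbracket, d_\pi)$, i.e. $d_\pi \pi' + \tfrac12 \llbracket \pi', \pi'\rrbracket = 0$. First I would observe that, by Theorem \ref{mc-pl-char}(ii) and equation (\ref{pre-mc-corr}), the bilinear operations $\triangleleft + \triangleleft'$ and $\triangleright + \triangleright'$ correspond precisely to the element $\pi + \pi' \in C^2_\mathsf{pL}(\mathfrak{a}, \mathfrak{a})$, so $(\mathfrak{a}, \triangleleft + \triangleleft', \triangleright + \triangleright')$ is a pre-Leibniz algebra if and only if $\llbracket \pi + \pi', \pi + \pi' \rrbracket_\mathsf{pL} = 0$.

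Next I would expand the bracket using bilinearity and the graded symmetry of $\llbracket~,~\rrbracket_\mathsf{pL}$ on degree-$1$ (that is, $C^2_\mathsf{pL}$) elements: since $\pi$ and $\pi'$ both sit in the component where the Balavoine-type bracket is symmetric, we get
\[
\llbracket \pi + \pi', \pi + \pi' \rrbracket_\mathsf{pL} = \llbracket \pi, \pi \rrbracket_\mathsf{pL} + 2\,\llbracket \pi, \pi' \rrbracket_\mathsf{pL} + \llbracket \pi', \pi' \rrbracket_\mathsf{pL}.
\]
The first term vanishes because $\pi$ is a Maurer-Cartan element (Theorem \ref{mc-pl-char}(ii) applied to the given pre-Leibniz algebra $(\mathfrak{a}, \triangleleft, \triangleright)$). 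Recognizing $\llbracket \pi, \pi' \rrbracket_\mathsf{pL} = d_\pi \pi'$ by the definition of $d_\pi$, the condition $\llbracket \pi + \pi', \pi + \pi' \rrbracket_\mathsf{pL} = 0$ becomes $2\, d_\pi \pi' + \llbracket \pi', \pi' \rrbracket_\mathsf{pL} = 0$, which is exactly the Maurer-Cartan equation for $\pi'$ in the differential graded Lie algebra $(C^{\ast+1}_\mathsf{pL}(\mathfrak{a},\mathfrak{a}), \llbracket~,~\rrbracket_\mathsf{pL}, d_\pi)$. This gives the claimed equivalence.

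The only point requiring a little care — and the place I would expect to pause — is the sign bookkeeping in the expansion of $\llbracket \pi + \pi', \pi + \pi' \rrbracket_\mathsf{pL}$, i.e. confirming that the cross terms genuinely combine as $2\,\llbracket \pi, \pi' \rrbracket_\mathsf{pL}$ rather than differing by a sign. This follows from the graded antisymmetry of the degree $-1$ bracket: for $f, g \in C^2_\mathsf{pL}$ one has $\llbracket f, g \rrbracket_\mathsf{pL} = -(-1)^{(2-1)(2-1)}\llbracket g, f \rrbracket_\mathsf{pL} = \llbracket g, f\rrbracket_\mathsf{pL}$, so the bracket is symmetric on this degree, exactly as in the Leibniz/Balavoine case, and the factor $2$ is correct. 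Everything else is formal manipulation inside the differential graded Lie algebra structure already established after Theorem \ref{mc-pl-char}.
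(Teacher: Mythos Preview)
Your proposal is correct and follows essentially the same approach as the paper: both expand $\llbracket \pi + \pi', \pi + \pi' \rrbracket_\mathsf{pL}$ by bilinearity, use $\llbracket \pi, \pi \rrbracket_\mathsf{pL} = 0$, combine the cross terms via graded symmetry, and identify the result with the Maurer-Cartan equation $d_\pi \pi' + \tfrac{1}{2}\llbracket \pi', \pi' \rrbracket_\mathsf{pL} = 0$. Your extra remark justifying the factor $2$ via graded antisymmetry is a nice explicit check that the paper leaves implicit.
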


\begin{proof}
Observe that
\begin{align*}
\llbracket \pi + \pi', \pi + \pi' \rrbracket_\mathsf{pL} =~& \llbracket \pi, \pi \rrbracket_\mathsf{pL} + \llbracket \pi, \pi' \rrbracket_\mathsf{pL} + \llbracket \pi', \pi \rrbracket_\mathsf{pL} + \llbracket \pi', \pi' \rrbracket_\mathsf{pL} \\
=~& 2 \big(  \llbracket \pi, \pi' \rrbracket_\mathsf{pL} + \frac{1}{2} \llbracket \pi', \pi' \rrbracket_\mathsf{pL}  \big) = 2 \big(   d_\pi (\pi') + \frac{1}{2} \llbracket \pi', \pi' \rrbracket_\mathsf{pL} \big).
\end{align*}
This shows that $\pi + \pi'$ (equivalently, the pair $(\triangleleft + \triangleleft', \triangleright + \triangleright')$ ) defines a pre-Leibniz algebra structure on $\mathfrak{a}$ if and only if $\pi'$ is a Maurer-Cartan element in the differential graded Lie algebra $(C^{\ast + 1}_\mathsf{pL}(\mathfrak{a}, \mathfrak{a}), \llbracket ~, ~\rrbracket_\mathsf{pL}, d_\pi)$.
\end{proof}

\section{Representations and cohomology of pre-Leibniz algebras}\label{sec-4}
In this section, we first introduce representations of a pre-Leibniz algebra and define the cohomology of a pre-Leibniz algebra with coefficients in a representation. This cohomology in a certain sense splits the Loday-Pirashvili cohomology of Leibniz algebra. We also find the relation between the cohomology of a relative Rota-Baxter operator introduced in \cite{sheng-pl,sheng-defr} and the cohomology of the induced pre-Leibniz algebra.

\medskip

\noindent {\bf Representations of pre-Leibniz algebras.} Here we introduce representations of a pre-Leibniz algebra and construct the corresponding semidirect product.
\begin{defn}
Let $( \mathfrak{a}, \triangleleft, \triangleright)$ be a pre-Leibniz algebra. A representation of it is consists of a vector space $M$ together with four bilinear operations
\begin{align}\label{rep-opr}
\triangleleft^L : \mathfrak{a} \otimes M \rightarrow M, \qquad \triangleright^L :  \mathfrak{a} \otimes M \rightarrow M, \qquad \triangleleft^R : M \otimes \mathfrak{a} \rightarrow M ~~~~ \text{ and } ~~~~ \triangleright^R : M \otimes \mathfrak{a} \rightarrow M
\end{align}
satisfying the $9$ identities where each set of $3$ identities correspond to the identities (\ref{pl-iden1})-(\ref{pl-iden3}) with exactly one of $x, y, z$ is replaced by an element of $M$ and the corresponding bilinear operation is replaced by the respective operation from (\ref{rep-opr}). A representation as above may be denoted by $(M, \triangleleft^L, \triangleright^L, \triangleleft^R, \triangleright^R)$.
\end{defn}

Any pre-Leibniz algebra $(\mathfrak{a}, \triangleleft, \triangleright)$ can be regarded as a representation of itself with $\triangleleft^L = \triangleleft^R = \triangleleft$ and $\triangleright^L = \triangleright^R = \triangleright$. This is called the adjoint representation.

\begin{prop}\label{prop-semid}
Let $(\mathfrak{a}, \triangleleft, \triangleright )$ be a pre-Leibniz algebra and $(M, \triangleleft^L, \triangleleft^R, \triangleright^L, \triangleright^R)$ be a representation of it. Then the direct sum $\mathfrak{a} \oplus M$ carries a pre-Leibniz algebra structure given by
\begin{align*}
(x, u) ~\overline{\triangleleft}~ (y, v) : =~& ( x \triangleleft y,~ x \triangleleft^L v + u \triangleleft^R y),\\
(x, u) ~\overline{\triangleright}~ (y, v) : =~& ( x \triangleright y,~ x \triangleright^L v + u \triangleright^R y).
\end{align*}
This is called the semidirect product pre-Leibniz algebra, denoted by $\mathfrak{a} \ltimes M$.
\end{prop}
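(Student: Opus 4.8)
The plan is to verify directly that the pair $(\overline{\triangleleft}, \overline{\triangleright})$ satisfies the three defining identities (\ref{pl-iden1})--(\ref{pl-iden3}) on the vector space $\mathfrak{a} \oplus M$. Since every operation in sight is bilinear, each of these three identities is multilinear in its three arguments; hence it suffices to check it on arguments that are ``homogeneous'', i.e. each of the form $(x,0)$ with $x \in \mathfrak{a}$ or of the form $(0,u)$ with $u \in M$. This splits the verification of each identity into $2^3 = 8$ cases, according to which of the three slots lie in $\mathfrak{a}$ and which lie in $M$.

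The first observation is that any product under $\overline{\triangleleft}$ or $\overline{\triangleright}$ of two elements of $M$ vanishes: $(0,u)\,\overline{\triangleleft}\,(0,v) = (0 \triangleleft 0,\ 0 \triangleleft^L v + u \triangleleft^R 0) = (0,0)$, and likewise for $\overline{\triangleright}$. Consequently, whenever at least two of the three arguments of one of the identities lie in $M$, every monomial on each side contains a product of two $M$-elements and therefore vanishes; these four cases are trivial. When all three arguments lie in $\mathfrak{a}$, the $M$-component of both sides is $0$ and the $\mathfrak{a}$-component is exactly the corresponding identity among (\ref{pl-iden1})--(\ref{pl-iden3}) for the pre-Leibniz algebra $(\mathfrak{a}, \triangleleft, \triangleright)$, which holds by hypothesis. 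There remain the three cases in which exactly one argument lies in $M$; here a short expansion of the formulas for $\overline{\triangleleft}$ and $\overline{\triangleright}$ shows the $\mathfrak{a}$-component of each side is $0$, while the $M$-component of the $j$-th identity with the $M$-element in the $k$-th slot is precisely one of the nine defining identities of a representation, namely the one obtained from identity number $j$ of (\ref{pl-iden1})--(\ref{pl-iden3}) by replacing the element in slot $k$ by an element of $M$ and each operation acting on it by the corresponding operation from (\ref{rep-opr}). Letting $j, k$ range over $\{1,2,3\}$ uses each of the nine representation identities exactly once, so all three pre-Leibniz identities hold on $\mathfrak{a} \oplus M$.

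I expect no genuine obstacle here: the notion of representation introduced above is designed precisely so that this computation goes through, so the only real task is bookkeeping --- matching the nine $M$-component equations with the nine axioms of a representation, and keeping the left/right versions of the actions (that is, $\triangleleft^L$ versus $\triangleleft^R$ and $\triangleright^L$ versus $\triangleright^R$) straight according to whether the $M$-slot sits on the left or the right of each operation. As an alternative one may phrase the argument via Maurer--Cartan elements: under the identification of Theorem \ref{mc-pl-char}, the structure on $\mathfrak{a} \oplus M$ corresponds to an element of $C^2_{\mathsf{pL}}(\mathfrak{a} \oplus M, \mathfrak{a} \oplus M)$, and the vanishing of its $\llbracket\,\cdot\,,\,\cdot\,\rrbracket_{\mathsf{pL}}$-square decomposes, by the same $\mathfrak{a}$-versus-$M$ bidegree count, into $\llbracket \pi, \pi \rrbracket_{\mathsf{pL}} = 0$ (the pre-Leibniz axioms for $\mathfrak{a}$) together with the part linear in $M$ (the representation axioms); for a statement this short, however, the direct check above is the most transparent route.
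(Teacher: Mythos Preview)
Your proposal is correct. The paper actually states Proposition~\ref{prop-semid} without proof, treating it as a routine verification; your direct check by homogeneous components (reducing via bilinearity to the cases with $0$, $1$, $2$, or $3$ arguments in $M$, and matching the nine nontrivial $M$-component equations to the nine representation axioms) is exactly the intended verification, and your alternative Maurer--Cartan reformulation is likewise valid.
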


\medskip

\noindent {\bf Cohomology theory.} Let $(\mathfrak{a}, \triangleleft, \triangleright )$ be a pre-Leibniz algebra and $(M, \triangleleft^L, \triangleright^L, \triangleleft^R,  \triangleright^R)$ be a representation of it. We define elements $\pi^L \in \mathrm{Hom} ({\bf k} [C_2] \otimes \mathfrak{a} \otimes M, M)$ and $\pi^R \in \mathrm{Hom} ({\bf k} [C_2] \otimes M \otimes \mathfrak{a}, M)$ by
\begin{align*}
\begin{cases}
\pi^L ([1]; x, u) = x \triangleleft^L u \\
\pi^L ([2]; x, u) = x \triangleright^L u,
\end{cases}
~~~~ \text{ and ~~} ~~~~
\begin{cases}
\pi^R ([1];  u,x) = u \triangleleft^R x \\
\pi^R ([2]; u,x) = u \triangleright^R x,
\end{cases}
\end{align*}
for $x \in \mathfrak{a}$ and $u \in M$. With these notations, we are now in a position to define the cohomology of the pre-Leibniz algebra $(\mathfrak{a}, \triangleleft, \triangleright)$ with coefficients in the representation $(M, \triangleleft^L, \triangleright^L, \triangleleft^R,  \triangleright^R)$.

For each $n \geq 1$, we define the $n$-th cochain group $C^n_{\mathsf{pL}} ( \mathfrak{a}, M)$ as 
\begin{align*}
C^n_\mathsf{pL} (\mathfrak{a}, M) :=  \mathrm{Hom} (\mathbf{k}[C_n] \otimes \mathfrak{a}^{\otimes n}, M),
\end{align*}
and a map $\delta_\mathsf{pL} : C^n_\mathsf{pL} (\mathfrak{a}, M) \rightarrow C^{n+1}_\mathsf{pL} (\mathfrak{a}, M)$ given by
\begin{align}\label{diff-formula}
&(\delta_\mathsf{pL} f ) ([r]; x_1, \ldots, x_{n+1}) \\
&= \sum_{i=1}^n (-1)^{i+1} ~ \pi^L \big(  R^{\sigma_i}_{2;1,n}([r]); x_i, f ( S^{\sigma_i}_{2;1,n} ([r]); x_1, \ldots, \widehat{x_i}, \ldots, x_{n+1})  \big) \nonumber \\
&+ (-1)^{n+1} ~ \pi^R \big( R^{id}_{2;n,1} ([r]); f ( S^{id}_{2;n,1} ([r]); x_1, \ldots, x_n), x_{n+1}  \big) \nonumber \\
&+ \sum_{1 \leq i < j \leq n+1} (-1)^i ~ f \big(  R^{\sigma_{i, j}}_{n;j-1, 2} ([r]); x_1, \ldots, \widehat{x_i}, \ldots, x_{j-1}, \pi \big(  S^{\sigma_{i, j}}_{n;j-1, 2} ([r]); x_i, x_j \big), x_{j+1}, \ldots, x_{n+1}  \big), \nonumber
\end{align}
for $[r] \in C_{n+1}$ and $x_1, \ldots, x_{n+1} \in \mathfrak{a}$. Note that, in the first summation, we denote by $\sigma_i \in Sh (1, n-1)$ the unique permutation on the set $\{ 1, \ldots, n \}$ such that $\sigma_i (1) = i$. Similarly, on the third summation, we denote $\sigma_{i,j} \in Sh (j-2, 1)$ the unique permutation on the set $\{ 1, \ldots, j-1 \}$ such that 
\begin{align*}
\sigma_{i,j} (1) = 1,~ \ldots,~ \sigma_{i,j} (i-1)= i-1,~ \sigma_{i,j} (i) = i+1,~ \ldots, ~\sigma_{i,j} (j-2) = j-1,~ \sigma_{i,j} (j-1) = i. 
\end{align*}
 Then we have the following.

\begin{prop}
The map $\delta_\mathsf{pL}$ satisfies $\delta_\mathsf{pL} \circ \delta_\mathsf{pL} = 0$. 
\end{prop}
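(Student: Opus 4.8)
The plan is to deduce $\delta_\mathsf{pL}\circ\delta_\mathsf{pL}=0$ from the graded Lie algebra of Theorem \ref{mc-pl-char} applied to the semidirect product $\mathfrak{a}\ltimes M$, exactly as the Loday--Pirashvili differential $\delta_\mathsf{LP}$ is recovered from the Balavoine bracket in Section \ref{sec-2}. By Proposition \ref{prop-semid}, the operations $\overline{\triangleleft},\overline{\triangleright}$ make $\mathfrak{a}\oplus M$ into a pre-Leibniz algebra; let $\widehat{\pi}\in C^2_\mathsf{pL}(\mathfrak{a}\oplus M,\mathfrak{a}\oplus M)$ be the associated Maurer--Cartan element, so that $\llbracket\widehat{\pi},\widehat{\pi}\rrbracket_\mathsf{pL}=0$ by Theorem \ref{mc-pl-char}(ii). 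Concretely, $\widehat{\pi}([i];-,-)$ restricts to $\pi([i];-,-)$ on $\mathfrak{a}\otimes\mathfrak{a}$, to $\pi^L([i];-,-)$ on $\mathfrak{a}\otimes M$, to $\pi^R([i];-,-)$ on $M\otimes\mathfrak{a}$, and vanishes on $M\otimes M$, for $i=1,2$.

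Next I would introduce a bookkeeping grading: declare $\mathfrak{a}$ to have weight $0$ and $M$ to have weight $1$, and assign a cochain in $C^n_\mathsf{pL}(\mathfrak{a}\oplus M,\mathfrak{a}\oplus M)$ the weight equal to the weight of its output minus the sum of the weights of its inputs. Then $C^\ast_\mathsf{pL}(\mathfrak{a}\oplus M,\mathfrak{a}\oplus M)$ becomes weight-graded, the bracket $\llbracket~,~\rrbracket_\mathsf{pL}$ is weight-preserving, and $\widehat{\pi}$ is homogeneous of weight $0$; hence $d_{\widehat{\pi}}=\llbracket\widehat{\pi},-\rrbracket_\mathsf{pL}$ preserves weight. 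Since $M$ carries the zero product inside $\mathfrak{a}\oplus M$ and all weights lie in $\{0,1\}$, a weight-$1$ cochain must take values in $M$ and must vanish on every tensor with at least one factor in $M$. Consequently $f\mapsto\widehat{f}$, where $\widehat{f}:\mathbf{k}[C_n]\otimes(\mathfrak{a}\oplus M)^{\otimes n}\to\mathfrak{a}\oplus M$ extends $f$ by $0$ on all tensors having an $M$-factor, identifies $C^n_\mathsf{pL}(\mathfrak{a},M)$ with the weight-$1$ part of $C^n_\mathsf{pL}(\mathfrak{a}\oplus M,\mathfrak{a}\oplus M)$.

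The heart of the argument is the identity $\widehat{\delta_\mathsf{pL}f}=(-1)^{n-1}\llbracket\widehat{\pi},\widehat{f}\rrbracket_\mathsf{pL}$ for $f\in C^n_\mathsf{pL}(\mathfrak{a},M)$, which I would verify by expanding the right-hand side via (\ref{pl-bracket}) and the definition of $\diamond_i$. Since $\widehat{\pi}$ has arity $2$ and $\widehat{f}$ is supported on all-$\mathfrak{a}$ inputs with values in $M$, the only surviving terms are: $\widehat{\pi}\diamond_2\widehat{f}$, which feeds the output of $\widehat{f}$ into the right argument of $\widehat{\pi}$, i.e.\ into $\pi^L$, with the sum over $Sh(1,n-1)$ producing exactly the first ($\pi^L$) sum of (\ref{diff-formula}); $\widehat{\pi}\diamond_1\widehat{f}$, which feeds it into $\pi^R$ and gives the single $\pi^R$-term; and $\widehat{f}\diamond_j\widehat{\pi}$ for $1\le j\le n$, in which only the $\pi$-component of $\widehat{\pi}$ contributes (the other components land in $M$, which $\widehat{f}$ annihilates), reproducing the double sum $\sum_{i<j}$ of (\ref{diff-formula}). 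Comparing the combinatorial maps $R^\sigma$ and $S^\sigma$ term by term and matching the shuffle signs yields the identity; keeping track of the colors $[r]$ and of the distinguished permutations $\sigma_i,\sigma_{i,j}$ is the one genuinely laborious step, and it is the main obstacle — everything else is formal.

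With the identity in hand the conclusion is immediate: because $\llbracket\widehat{\pi},\widehat{\pi}\rrbracket_\mathsf{pL}=0$, the graded Jacobi identity gives $d_{\widehat{\pi}}\circ d_{\widehat{\pi}}=\tfrac12\llbracket\llbracket\widehat{\pi},\widehat{\pi}\rrbracket_\mathsf{pL},-\rrbracket_\mathsf{pL}=0$ on $C^\ast_\mathsf{pL}(\mathfrak{a}\oplus M,\mathfrak{a}\oplus M)$; restricting to the weight-$1$ subspace and transporting along $f\mapsto\widehat{f}$ gives $\delta_\mathsf{pL}\circ\delta_\mathsf{pL}=0$. Alternatively one may bypass the weight grading altogether by observing directly from the above expansion that $\llbracket\widehat{\pi},\widehat{f}\rrbracket_\mathsf{pL}=\widehat{g}$ for a unique $g\in C^{n+1}_\mathsf{pL}(\mathfrak{a},M)$, necessarily equal to $(-1)^{n-1}\delta_\mathsf{pL}f$, and then argue as above.
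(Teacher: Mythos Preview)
Your proposal is correct and follows essentially the same route as the paper: both pass to the semidirect product pre-Leibniz algebra $\mathfrak{a}\ltimes M$, encode it as a Maurer--Cartan element $\widehat{\pi}$ (the paper writes $\overline{\pi}$) in the graded Lie algebra of Theorem \ref{mc-pl-char}, and then identify $\{C^\ast_\mathsf{pL}(\mathfrak{a},M),\delta_\mathsf{pL}\}$ with a subcomplex of $\{C^\ast_\mathsf{pL}(\mathfrak{a}\oplus M,\mathfrak{a}\oplus M),\overline{\delta_\mathsf{pL}}\}$ where $\overline{\delta_\mathsf{pL}}=(-1)^{n-1}\llbracket\overline{\pi},-\rrbracket_\mathsf{pL}$. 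Your weight-grading device and the term-by-term expansion of $\llbracket\widehat{\pi},\widehat{f}\rrbracket_\mathsf{pL}$ make explicit what the paper merely asserts with ``it can be easily checked that the inclusions \ldots\ make \ldots\ into a subcomplex'', but the underlying argument is the same.
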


\begin{proof}
Since $(\mathfrak{a}, \triangleleft, \triangleright)$ is a pre-Leibniz algebra and $(M, \triangleleft^L, \triangleright^L, \triangleleft^R, \triangleright^R)$ is a representation, it follows that $(\mathfrak{a} \oplus M, \overline{\triangleleft}, \overline{\triangleright})$ is a pre-Leibniz algebra (Proposition \ref{prop-semid}). 
Define an element $\overline{\pi} \in \mathrm{Hom} ( {\bf k}[C_2] \otimes (\mathfrak{a} \oplus M)^{\otimes 2}, \mathfrak{a} \oplus M)$ by
\begin{align*}
\overline{\pi} ([1];(x,u), (y,v)) = (x, u) ~\overline{\triangleleft}~ (y, v)  ~~~~ \text{ and } ~~~~ \overline{\pi} ([2];(x,u), (y,v)) = (x, u) ~\overline{\triangleright}~ (y, v),
\end{align*}
for $(x,u), (y, v) \in \mathfrak{a} \oplus M$. Then by Theorem \ref{mc-pl-char}, $\overline{\pi}$ is a Maurer-Cartan element in the graded Lie algebra $(C^{\ast +1}_\mathsf{pL} (\mathfrak{a} \oplus M, \mathfrak{a} \oplus M), \llbracket ~, ~ \rrbracket_\mathsf{pL})$. Hence $\overline{\pi}$ induces a cochain complex $\{ C^\ast_\mathsf{pL} (\mathfrak{a} \oplus M, \mathfrak{a} \oplus M), \overline{\delta_\mathsf{pL}} \}$, where
\begin{align*}
C^n_\mathsf{pL} (\mathfrak{a} \oplus M, \mathfrak{a} \oplus M) =~& \mathrm{Hom}( {\bf k} [C_n] \otimes (\mathfrak{a} \oplus M)^{\otimes n}, \mathfrak{a} \oplus M), \text{ for } n \geq 1   ~~~~~ \text{ and } \\
~~~ \overline{\delta_\mathsf{pL}} (F) =~& (-1)^{n-1} \llbracket \overline{\pi}, F \rrbracket_{\mathsf{pL}}, \text{ for } F \in C^n_\mathsf{pL} (\mathfrak{a} \oplus M, \mathfrak{a} \oplus M).
\end{align*}
Then it can be easily checked that the inclusions
\begin{align*}
C^n_\mathsf{pL} (\mathfrak{a}, M) = \mathrm{Hom}({\bf k} [C_n] \otimes \mathfrak{a}^{\otimes n}, M) \hookrightarrow \mathrm{Hom}( {\bf k} [C_n] \otimes (\mathfrak{a} \oplus M)^{\otimes n}, \mathfrak{a} \oplus M) = C^n_\mathsf{pL} (\mathfrak{a} \oplus M, \mathfrak{a} \oplus M)
\end{align*}
makes $\{ C^\ast_\mathsf{pL} (\mathfrak{a}, M), \delta_\mathsf{pL} \}$ into a subcomplex of $\{ C^\ast_\mathsf{pL} (\mathfrak{a} \oplus M, \mathfrak{a} \oplus M), \overline{\delta_\mathsf{pL}} \}$. This implies that $\delta_\mathsf{pL} \circ \delta_\mathsf{pL} = 0$.
\end{proof}

\medskip

It follows from the above proposition that $\{ C^\ast_\mathsf{pL} (\mathfrak{a}, M), \delta_\mathsf{pL} \} $ is a cochain complex. Let $Z^n_\mathsf{pL} (\mathfrak{a}, M) \subset C^n_\mathsf{pL} (\mathfrak{a}, M)$ denote the space of $n$-cocycles and $B^n_\mathsf{pL} (\mathfrak{a}, M) \subset C^n_\mathsf{pL} (\mathfrak{a}, M)$ denote the space of $n$-coboundaries. Then we have $B^n_\mathsf{pL} (\mathfrak{a}, M) \subset Z^n_\mathsf{pL} (\mathfrak{a}, M)$. The corresponding quotients
\begin{align*}
H^n_\mathsf{pL} (\mathfrak{a}, M) := \frac{ Z^n_\mathsf{pL} (\mathfrak{a}, M)}{B^n_\mathsf{pL} (\mathfrak{a}, M)}, \text{ for } n \geq 1
\end{align*}
are called the cohomology of the pre-Leibniz algebra $\mathfrak{a}$ with coefficients in the representation $M$.

\medskip


Let $(\mathfrak{a}, \triangleleft, \triangleright)$ be a pre-Leibniz algebra. Consider the corresponding Maurer-Cartan element $\pi \in C^2_\mathsf{pL} (\mathfrak{a}, \mathfrak{a})$ in the graded Lie algebra $(C^{\ast + 1}_\mathsf{pL}(\mathfrak{a}, \mathfrak{a}), \llbracket ~, ~ \rrbracket_\mathsf{pL})$.
It follows from the expressions of (\ref{pl-bracket}) and (\ref{diff-formula}) that the differential $\delta_\mathsf{pL} :  C^n_\mathsf{pL} ( \mathfrak{a}, \mathfrak{a}) \rightarrow C^{n+1}_\mathsf{pL} ( \mathfrak{a}, \mathfrak{a})$ of the pre-Leibniz algebra $\mathfrak{a}$ with coefficients in the adjoint representation is given by
\begin{align}\label{ad-diff}
\delta_\mathsf{pL} f = (-1)^{n-1} \llbracket \pi, f \rrbracket_\mathsf{pL}, \text{ for } f \in C^n_\mathsf{pL} ( \mathfrak{a}, \mathfrak{a}).
\end{align}
The corresponding cohomology groups are denoted by $H^\ast_\mathsf{pL} (\mathfrak{a}, \mathfrak{a}).$ It follows that the cohomology of the pre-Leibniz algebra $\mathfrak{a}$ with coefficients in the adjoint representation is induced by the Maurer-Cartan element $\pi \in C^2_\mathsf{pL} (\mathfrak{a}, \mathfrak{a})$ in the  graded Lie algebra $( C^{\ast +1}_\mathsf{pL} (\mathfrak{a}, \mathfrak{a}), \llbracket ~, ~ \rrbracket_\mathsf{pL} ).$ As a consequence, we get the following.

\begin{thm}
Let $(\mathfrak{a}, \triangleleft, \triangleright )$ be a pre-Leibniz algebra. Then the shifted graded space of cohomology $H^{\ast +1 }_\mathsf{pL} (\mathfrak{a}, \mathfrak{a})$ carries a graded Lie algebra structure.
\end{thm}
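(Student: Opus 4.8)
The plan is to deduce the result as an immediate consequence of the preceding discussion, where we observed that the adjoint differential $\delta_{\mathsf{pL}}$ is governed by the Maurer--Cartan element $\pi$ via $\delta_{\mathsf{pL}} f = (-1)^{n-1} \llbracket \pi, f \rrbracket_{\mathsf{pL}}$, i.e. $\delta_{\mathsf{pL}} = d_\pi$ on $C^{\ast}_{\mathsf{pL}}(\mathfrak{a},\mathfrak{a})$. The key point is the following general fact about differential graded Lie algebras: if $(L, \llbracket\,,\,\rrbracket, d)$ is a DGLA, then the bracket $\llbracket\,,\,\rrbracket$ descends to the cohomology $H^\ast(L,d)$, making $H^\ast(L,d)$ a graded Lie algebra. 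This follows because $d$ is a derivation for $\llbracket\,,\,\rrbracket$ (the Leibniz/compatibility axiom of a DGLA), so the bracket of two cocycles is a cocycle, and the bracket of a cocycle with a coboundary is a coboundary; hence the induced bracket on cohomology classes is well defined, and it inherits graded antisymmetry and the graded Jacobi identity from $L$.

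The steps, in order, are: first, recall from Theorem \ref{mc-pl-char}(i) and the subsequent remarks that $(C^{\ast+1}_{\mathsf{pL}}(\mathfrak{a},\mathfrak{a}), \llbracket\,,\,\rrbracket_{\mathsf{pL}})$ is a graded Lie algebra, and that the Maurer--Cartan element $\pi$ corresponding to $(\mathfrak{a},\triangleleft,\triangleright)$ gives a square-zero derivation $d_\pi = \llbracket \pi, -\rrbracket_{\mathsf{pL}}$, so that $(C^{\ast+1}_{\mathsf{pL}}(\mathfrak{a},\mathfrak{a}), \llbracket\,,\,\rrbracket_{\mathsf{pL}}, d_\pi)$ is a DGLA (this was already stated in the excerpt right after Theorem \ref{mc-pl-char}). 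Second, identify, using (\ref{ad-diff}), the adjoint cochain complex $\{C^\ast_{\mathsf{pL}}(\mathfrak{a},\mathfrak{a}), \delta_{\mathsf{pL}}\}$ with the underlying complex of this DGLA (up to the standard degree shift), so that $H^\ast_{\mathsf{pL}}(\mathfrak{a},\mathfrak{a})$ is precisely the cohomology $H^\ast$ of the DGLA. Third, invoke (or spell out in one line) the general principle that the cohomology of a DGLA is a graded Lie algebra: for $[f], [g]$ cohomology classes with $\delta_{\mathsf{pL}} f = \delta_{\mathsf{pL}} g = 0$, the graded derivation property $\delta_{\mathsf{pL}} \llbracket f, g\rrbracket_{\mathsf{pL}} = \pm \llbracket \delta_{\mathsf{pL}} f, g\rrbracket_{\mathsf{pL}} \pm \llbracket f, \delta_{\mathsf{pL}} g\rrbracket_{\mathsf{pL}} = 0$ shows $\llbracket f,g\rrbracket_{\mathsf{pL}}$ is a cocycle, and a similar computation with one of them a coboundary shows the class $\llbracket [f],[g]\rrbracket_{\mathsf{pL}} := [\llbracket f,g\rrbracket_{\mathsf{pL}}]$ is independent of representatives. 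The graded antisymmetry and graded Jacobi identity on $H^{\ast+1}_{\mathsf{pL}}(\mathfrak{a},\mathfrak{a})$ then descend directly from those of $\llbracket\,,\,\rrbracket_{\mathsf{pL}}$.

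I do not expect a genuine obstacle here: the statement is essentially formal once the DGLA structure from Theorem \ref{mc-pl-char} is in hand. The only point requiring minor care is bookkeeping of the degree shift — the bracket $\llbracket\,,\,\rrbracket_{\mathsf{pL}}$ has degree $-1$ on $C^\ast_{\mathsf{pL}}(\mathfrak{a},\mathfrak{a})$ and degree $0$ on $C^{\ast+1}_{\mathsf{pL}}(\mathfrak{a},\mathfrak{a})$, and one must check that $d_\pi$ is a graded derivation for the bracket with the sign conventions matching (\ref{ad-diff}), i.e. that $d_\pi^2 = \tfrac{1}{2}\llbracket \llbracket \pi,\pi\rrbracket_{\mathsf{pL}}, -\rrbracket_{\mathsf{pL}} = 0$ uses exactly the Maurer--Cartan equation $\llbracket \pi,\pi\rrbracket_{\mathsf{pL}} = 0$ from Theorem \ref{mc-pl-char}(ii). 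All of this is standard and was in fact already asserted in the paragraph preceding the theorem, so the proof is a short paragraph citing these facts.

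\begin{proof}
As observed after Theorem \ref{mc-pl-char}, the Maurer--Cartan element $\pi \in C^2_\mathsf{pL}(\mathfrak{a},\mathfrak{a})$ corresponding to $(\mathfrak{a},\triangleleft,\triangleright)$ makes $(C^{\ast+1}_\mathsf{pL}(\mathfrak{a},\mathfrak{a}), \llbracket ~,~\rrbracket_\mathsf{pL}, d_\pi)$ into a differential graded Lie algebra, where $d_\pi = \llbracket \pi, - \rrbracket_\mathsf{pL}$; indeed $d_\pi$ is a graded derivation of $\llbracket ~,~\rrbracket_\mathsf{pL}$ by the graded Jacobi identity, and $d_\pi^2 = \tfrac12 \llbracket \llbracket \pi,\pi\rrbracket_\mathsf{pL}, - \rrbracket_\mathsf{pL} = 0$ by the Maurer--Cartan equation $\llbracket \pi,\pi\rrbracket_\mathsf{pL} = 0$. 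By (\ref{ad-diff}), the adjoint differential $\delta_\mathsf{pL}$ coincides with $d_\pi$ (up to the sign $(-1)^{n-1}$), so the cohomology $H^\ast_\mathsf{pL}(\mathfrak{a},\mathfrak{a})$ is the cohomology of this differential graded Lie algebra.

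Now for any differential graded Lie algebra, the bracket descends to cohomology. Concretely, if $f, g$ are cocycles then $\delta_\mathsf{pL} \llbracket f, g \rrbracket_\mathsf{pL} = \pm \llbracket \delta_\mathsf{pL} f, g \rrbracket_\mathsf{pL} \pm \llbracket f, \delta_\mathsf{pL} g \rrbracket_\mathsf{pL} = 0$, so $\llbracket f, g\rrbracket_\mathsf{pL}$ is a cocycle; and if moreover $g = \delta_\mathsf{pL} h$ is a coboundary, then $\llbracket f, g \rrbracket_\mathsf{pL} = \llbracket f, \delta_\mathsf{pL} h \rrbracket_\mathsf{pL} = \pm \delta_\mathsf{pL} \llbracket f, h \rrbracket_\mathsf{pL}$ is a coboundary. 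Hence $\llbracket [f], [g] \rrbracket_\mathsf{pL} := [ \llbracket f, g \rrbracket_\mathsf{pL} ]$ is a well-defined operation on $H^{\ast+1}_\mathsf{pL}(\mathfrak{a},\mathfrak{a})$, and it satisfies graded antisymmetry and the graded Jacobi identity because $\llbracket ~,~\rrbracket_\mathsf{pL}$ does. Therefore $H^{\ast+1}_\mathsf{pL}(\mathfrak{a},\mathfrak{a})$ is a graded Lie algebra.
\end{proof}
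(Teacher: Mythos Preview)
Your proposal is correct and follows exactly the approach the paper intends: the paper states this theorem as an immediate consequence (``As a consequence, we get the following'') of the preceding observation that $(C^{\ast+1}_\mathsf{pL}(\mathfrak{a},\mathfrak{a}), \llbracket ~,~\rrbracket_\mathsf{pL}, d_\pi)$ is a DGLA whose differential agrees with $\delta_\mathsf{pL}$, without writing out any further details. You have simply spelled out the standard fact that the cohomology of a DGLA inherits a graded Lie algebra structure, which is precisely what the paper leaves implicit.
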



\medskip

\noindent {\bf Relation with the Loday-Pirashvili cohomology.} Let $(\mathfrak{a}, \triangleleft, \triangleright)$ be a pre-Leibniz algebra and let $(M, \triangleleft^L,  \triangleright^L, \triangleleft^R, \triangleright^R)$ be a representation of it. Then we know from Proposition \ref{pre-leib-sum} that $\mathfrak{a}_\mathrm{Tot} = ( \mathfrak{a}, [~,~]_{\triangleleft, \triangleright})$ is a Leibniz algebra. Moreover, it is easy to see that $(M, \rho^L, \rho^R)$ is a representation of the Leibniz algebra $\mathfrak{a}_\mathrm{Tot}$, where
\begin{align*}
\rho^L = \triangleleft^L + \triangleright^L \qquad \text{ and } \qquad \rho^R = \triangleleft^R + \triangleright^R.
\end{align*}
We denote this representation by $M_\mathrm{Tot}.$
Therefore, we may consider the Loday-Pirashvili cohomology of the Leibniz algebra $\mathfrak{a}_\mathrm{Tot}$ with coefficients in the representation $M_\mathrm{Tot}$. The following result finds the connection between the pre-Leibniz cohomology and the Loday-Pirashvili cohomology.

\begin{thm}\label{rel-lp-coho}
The collection $\{ \Phi_n \}_{n \geq 1}$ of maps
\begin{align*}
\Phi_n : &~C^n_\mathsf{pL} (\mathfrak{a}, M) \rightarrow C^n_\mathsf{LP} (\mathfrak{a}_\mathrm{Tot}, M_\mathrm{Tot}), ~ f \mapsto f_{\mathrm{Tot}} \\
&\text{ where }~~ f_\mathrm{Tot} (x_1, \ldots, x_n) = \sum_{r=1}^n f([r]; x_1, \ldots, x_n)
\end{align*}
give rise to a morphism from the cochain complex $\{ C^\ast_\mathsf{pL} ( \mathfrak{a}, M), \delta_\mathsf{pL} \}$ defining the pre-Leibniz cohomology to the Loday-Pirashvili cochain complex $\{ C^\ast_\mathsf{LP} ( \mathfrak{a}_\mathrm{Tot}, M_\mathrm{Tot}), \delta_\mathsf{LP} \}$ defining the Leibniz algebra cohomology. As a consequence, there is a morphism $ H^\ast_\mathsf{pL} (\mathfrak{a}, M) \rightarrow H^\ast_\mathsf{LP} (\mathfrak{a}_\mathrm{Tot}, M_\mathrm{Tot})$ of cohomologies.
\end{thm}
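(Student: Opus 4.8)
The plan is to verify directly that the family $\{\Phi_n\}$ is a cochain map, i.e. that $\delta_\mathsf{LP} \circ \Phi_n = \Phi_{n+1} \circ \delta_\mathsf{pL}$ for all $n \geq 1$; the statement about cohomologies then follows formally. First I would fix $f \in C^n_\mathsf{pL}(\mathfrak{a}, M)$ and elements $x_1, \ldots, x_{n+1} \in \mathfrak{a}$, and compute both $(\delta_\mathsf{LP} f_\mathrm{Tot})(x_1, \ldots, x_{n+1})$ and $\big((\delta_\mathsf{pL} f)_\mathrm{Tot}\big)(x_1, \ldots, x_{n+1}) = \sum_{r=1}^{n+1} (\delta_\mathsf{pL} f)([r]; x_1, \ldots, x_{n+1})$, and match them term by term according to the three types of summands appearing in the formulas (\ref{lp-coboundary}) and (\ref{diff-formula}): the $\pi^L$ / $\rho^L$ terms, the $\pi^R$ / $\rho^R$ terms, and the ``inner composition'' terms involving $\pi$ or $[~,~]$.

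The key observation driving the bookkeeping is the following: for the combinatorial maps $R^\sigma_{m;i,n}$ and $S^\sigma_{m;i,n}$, summing the value of a function over all output labels $[r] \in C_{m+n-1}$ decouples into a product of sums over the relevant smaller label sets. Concretely, for the first type of term one uses that $\sum_{[r] \in C_{n+1}} \pi^L\big(R^{\sigma_i}_{2;1,n}([r]); x_i, f(S^{\sigma_i}_{2;1,n}([r]); \ldots)\big)$ collapses, upon summing the inner argument over all labels, to $\rho^L\big(x_i, f_\mathrm{Tot}(\ldots)\big) = (\triangleleft^L + \triangleright^L)\big(x_i, f_\mathrm{Tot}(\ldots)\big)$: the label $[r]$ either lands outside the first box (forcing $S$ to output $[1]+[2]$, i.e. summing the inner slot, while $R$ outputs $[1]$, i.e. applying $\triangleleft^L$) or inside it (forcing $R$ to output $[2]$, i.e. $\triangleright^L$, while $S$ picks out a single position). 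The same mechanism handles the $\pi^R$ terms, yielding $\rho^R\big(f_\mathrm{Tot}(\ldots), x_{n+1}\big)$, and the inner-composition terms, where summing $[r]$ over $C_{n+1}$ together with the two positions inside the $(j-1)$-st box of $R^{\sigma_{i,j}}_{n;j-1,2}$ reproduces $f_\mathrm{Tot}$ evaluated with $\pi([1];x_i,x_j) + \pi([2];x_i,x_j) = x_i \triangleleft x_j + x_i \triangleright x_j = [x_i,x_j]_{\triangleleft,\triangleright}$ in the $j$-th slot. The signs $(-1)^{i+1}$, $(-1)^{n+1}$, $(-1)^i$ are identical in the two complexes, so no sign reconciliation beyond this is needed.

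An alternative, more structural route avoids the explicit combinatorics: one can factor $\Phi$ through the ``doubling'' construction $D(\mathfrak{a}) = \mathfrak{a} \oplus \mathfrak{a}$ used in the proof of Theorem \ref{mc-pl-char}. Indeed, the collapsing-over-labels map $f \mapsto f_\mathrm{Tot}$ is precisely the composite of the inclusion $C^n_\mathsf{pL}(\mathfrak{a},M) \hookrightarrow C^n_\mathsf{pL}(\mathfrak{a}\oplus M, \mathfrak{a}\oplus M)$ with the ``sum-the-two-copies'' map induced by the fold $\nabla \colon D(\mathfrak{a}) = \mathfrak{a}\oplus\mathfrak{a} \to \mathfrak{a}$; since $D$ intertwines $\llbracket-,-\rrbracket_\mathsf{pL}$ with the Balavoine bracket $\llbracket-,-\rrbracket_\mathsf{B}$ (as already proved) and $D(\overline{\pi})$ restricts to the Maurer-Cartan element of $\mathfrak{a}_\mathrm{Tot}$ on the folded copy, compatibility with the differentials is automatic. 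I expect the main obstacle to be purely organizational: carefully tracking which summand of $\delta_\mathsf{pL}$ with which output label $[r]$ contributes to which summand of $\delta_\mathsf{LP}$, and checking that the shuffle signs $(-1)^\sigma$ hidden in the $\diamond_i$ operations are exactly absorbed when one passes to the $\mathrm{Tot}$-level (where all shuffles of the same block structure are identified). Once that index-chasing is set up cleanly — most efficiently via the $D(\mathfrak{a})$ factorization — the verification is routine, and the induced map $H^\ast_\mathsf{pL}(\mathfrak{a},M) \to H^\ast_\mathsf{LP}(\mathfrak{a}_\mathrm{Tot}, M_\mathrm{Tot})$ on cohomology follows immediately.
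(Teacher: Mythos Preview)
Your direct-verification approach is correct and will go through. The core combinatorial mechanism you identify --- that summing over all labels $[r]\in C_{n+1}$ collapses each type of summand in $\delta_\mathsf{pL}$ to the matching summand in $\delta_\mathsf{LP}$ --- is exactly the right observation. A couple of minor slips: the map $S^{\sigma_i}_{2;1,n}$ lands in $\mathbf{k}[C_n]$, not $\mathbf{k}[C_2]$, so the ``sum'' output is $[1]+\cdots+[n]$ rather than $[1]+[2]$; and your inside/outside description of the first box is inverted (when $[r]$ is \emph{in} the singleton box, $R$ returns $[1]$ and $S$ returns the full sum). These are cosmetic and do not affect the argument.

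The paper organizes the same combinatorics differently. Instead of matching the differentials term by term, it first shows that the maps $\Phi_n$ define a morphism of graded Lie algebras, via the identity $\Phi_{m+n-1}(f\diamond_i g)=\Phi_m(f)\circ_i\Phi_n(g)$; since both differentials are given by bracketing with a Maurer--Cartan element and $\Phi_2(\pi)=\pi_\mathrm{Tot}$, the cochain map property follows for the adjoint representation without touching the explicit formula (\ref{diff-formula}). For an arbitrary representation $M$, the paper then passes to the semidirect product pre-Leibniz algebra $\mathfrak{a}\oplus M$, applies the adjoint case there, and restricts to the subcomplexes. Your direct approach has the advantage of handling all representations at once; the paper's route buys the stronger statement that $\Phi$ respects the full graded Lie structure, not just the differential. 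Your second, ``structural'' paragraph conflates two unrelated constructions: the doubling $D(\mathfrak{a})=\mathfrak{a}\oplus\mathfrak{a}$ from Theorem~\ref{mc-pl-char} (a device to embed $\llbracket-,-\rrbracket_\mathsf{pL}$ into a Balavoine bracket) and the semidirect product $\mathfrak{a}\oplus M$ (which reduces a general representation to the adjoint one). The fold map $\nabla$ plays no role in either, and the factorization you sketch does not exist as stated; if you want a structural proof, follow the paper's two-step reduction instead.
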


\begin{proof}
\underline{Case I.} (adjoint representation) Let $(M, \triangleleft^L, \triangleright^L, \triangleleft^R, \triangleright^R)$ be the adjoint representation of the pre-Leibniz algebra $(\mathfrak{a}, \triangleleft, \triangleright)$. That is, $M = \mathfrak{a}$ and $ \triangleleft^L = \triangleleft^R = \triangleleft$, $\triangleright^L = \triangleright^R = \triangleright$. We have seen in (\ref{ad-diff}) that the differential $\delta_\mathsf{pL}$ in the cochain complex $\{ C^\ast_\mathsf{pL} (\mathfrak{a}, \mathfrak{a}), \delta_\mathsf{pL} \}$ is given by
$\delta_\mathsf{pL} (f) = (-1)^{n-1} \llbracket \pi, f \rrbracket_\mathsf{pL},$ for $f \in C^n_\mathsf{pL} (\mathfrak{a}, \mathfrak{a}).$
On the other hand, we know that the differential $\delta_\mathsf{LP}$ in the cochain complex $\{ C^\ast_\mathsf{LP} (\mathfrak{a}_\mathrm{Tot}, \mathfrak{a}_\mathrm{Tot}), \delta_\mathsf{LP} \}$ is given by
\begin{align*}
\delta_\mathsf{LP} (F) = (-1)^{n-1} \llbracket \pi_\mathrm{Tot}, F \rrbracket_\mathsf{B}, ~ \text{ for } F \in C^n_\mathsf{LP} (\mathfrak{a}_\mathrm{Tot}, \mathfrak{a}_\mathrm{Tot}).
\end{align*}
Here $\pi_\mathrm{Tot} \in C^2_\mathsf{LP} (\mathfrak{a}_\mathrm{Tot}, \mathfrak{a}_\mathrm{Tot})$ is the element defined by $\pi_\mathrm{Tot} (x, y) = x \triangleleft y + x \triangleright y$, for $x, y \in \mathfrak{a}_\mathrm{Tot}$. For any $f \in C^m_\mathsf{pL} (\mathfrak{a}, \mathfrak{a})$, $g \in C^n_\mathsf{pL}(\mathfrak{a}, \mathfrak{a})$ and $1 \leq i \leq m$, it is easy to see that $\Phi_{m+n-1} (f \diamond_i g) = \Phi_m (f) \circ_i \Phi_n (g)$. As a consequence, we get that $\Phi_{m+n-1} (\llbracket f, g \rrbracket_\mathsf{pL}) = \llbracket \Phi_m (f) , \Phi_n (g) \rrbracket_\mathsf{B}$. Moreover, since $\Phi_2 (\pi) = \pi_\mathrm{Tot}$, we have
\begin{align*}
( \delta_\mathsf{LP} \circ \Phi_n )(f) = (-1)^{n-1} \llbracket \pi_\mathrm{Tot}, \Phi_n (f) \rrbracket_\mathsf{B} =~& (-1)^{n-1} \llbracket \Phi_2 (\pi), \Phi_n (f) \rrbracket_B \\
=~& (-1)^{n-1} \Phi_{n+1} (\llbracket \pi, f \rrbracket_\mathsf{pL}) = (\Phi_{n+1} \circ \delta_\mathsf{pL}) (f),
\end{align*}
for $f \in C^n_\mathsf{pL}(\mathfrak{a}, \mathfrak{a})$. This shows that the collection $\{ \Phi_n \}_{n \geq 1}$ defines a morphism of cochain complexes from $\{ C^\ast_\mathsf{pL} (\mathfrak{a}, \mathfrak{a}), \delta_\mathsf{pL} \}$ to $\{  C^\ast_\mathsf{LP} (\mathfrak{a}_\mathrm{Tot}, \mathfrak{a}_\mathrm{Tot}), \delta_\mathsf{LP}  \}.$

\medskip

\noindent \underline{Case II.} (arbitrary representation) Let $(M, \triangleleft^L, \triangleright^L, \triangleleft^R, \triangleright^R)$ be an arbitrary representation of the pre-Leibniz algebra $(\mathfrak{a}, \triangleleft, \triangleright)$. Consider the semidirect product pre-Leibniz algebra $(\mathfrak{a} \oplus M, \overline{\triangleleft}, \overline{\triangleright})$ given in Proposition \ref{prop-semid}. Let $(\mathfrak{a} \oplus M)_\mathrm{Tot} = (\mathfrak{a} \oplus M, [~,~]_{ \overline{\triangleleft}, \overline{\triangleright}  })$ be the corresponding total Leibniz algebra. By case I, the collection $\{ \Psi_n \}_{n \geq 1}$ of maps
\begin{align*}
\Psi_n : C^n_\mathsf{pL} (\mathfrak{a} \oplus M, \mathfrak{a} \oplus M) \rightarrow C^n_\mathsf{LP} ( (\mathfrak{a} \oplus M)_\mathrm{Tot}, (\mathfrak{a} \oplus M)_\mathrm{Tot} ), ~ f \mapsto f_\mathrm{Tot}
\end{align*}
forms a morphism of cochain complexes from $\{  C^\ast_\mathsf{pL} (\mathfrak{a} \oplus M, \mathfrak{a} \oplus M), \delta_\mathsf{pL} \}$ to $\{ C^\ast_\mathsf{LP} ( (\mathfrak{a} \oplus M)_\mathrm{Tot}, (\mathfrak{a} \oplus M)_\mathrm{Tot} ), \delta_\mathsf{LP} \}$. We observe that
\begin{align*}
\{ C^\ast_\mathsf{pL} (\mathfrak{a}, M), \delta_\mathsf{pL} \} &\text{ is a subcomplex of } \{  C^\ast_\mathsf{pL} (\mathfrak{a} \oplus M, \mathfrak{a} \oplus M), \delta_\mathsf{pL} \}, \text{ and } \\
 \{ C^\ast_\mathsf{LP} (\mathfrak{a}_\mathrm{Tot}, M_\mathrm{Tot}), \delta_\mathsf{LP} \} &\text{ is a subcomplex of } \{ C^\ast_\mathsf{LP} ( (\mathfrak{a} \oplus M)_\mathrm{Tot}, (\mathfrak{a} \oplus M)_\mathrm{Tot} ), \delta_\mathsf{LP} \}.
\end{align*}
Finally, the result follows as the restriction of $\Psi_\ast$ to the subcomplex $\{ C^\ast_\mathsf{pL} (\mathfrak{a}, M), \delta_\mathsf{pL} \}$ is given by $\Phi_\ast$. Hence the result follows.
\end{proof}

\medskip

\section{Deformations of pre-Leibniz algebras}\label{sec-5}
In this section, we study formal and finite order deformations of pre-Leibniz algebras following the classical theory of Gerstenhaber \cite{gers}. 

\medskip

\noindent {\bf Formal deformations.}
Let $(\mathfrak{a}, \triangleleft, \triangleright)$ be a pre-Leibniz algebra. Consider the space $\mathfrak{a}[[t]]$ of formal power series in $t$ with coefficients from $\mathfrak{a}$. Then $\mathfrak{a}[[t]]$ is a $\mathbf{k}[[t]]$-module.

\begin{defn}
A formal deformation of the pre-Leibniz algebra $(\mathfrak{a}, \triangleleft, \triangleright)$ consists of a pair $(\triangleleft_t, \triangleright_t )$ of two formal power series in $t$ with coefficients from bilinear maps on $\mathfrak{a}$,
\begin{align*}
\triangleleft_t = \triangleleft_0 + t \triangleleft_1 +~ t^2 \triangleleft_2 + \cdots ~~\text{ and }~~ \triangleright_t = \triangleright_0 + t \triangleright_1 + ~t^2 \triangleright_2 + \cdots
\end{align*}
with $\triangleleft_0 = \triangleleft$ and $\triangleright_0 = \triangleright$, such that $(\mathfrak{a}[[t]], \triangleleft_t, \triangleright_t)$ is a pre-Leibniz algebra over $\mathbf{k}[[t]].$
\end{defn}

For each $i \geq 0$, we define an element $\pi_i \in C^2_\mathsf{pL} (\mathfrak{a}, \mathfrak{a}) =  \mathrm{Hom} (\mathbf{k}[C_2] \otimes \mathfrak{a}^{\otimes 2}, \mathfrak{a})$ by
\begin{align*}
\pi_i ([1]; x, y) = x \triangleleft_i y   \quad \text{ and } \quad  \pi_i ([2]; x, y) = x \triangleright_i y, ~ \text{ for } x, y \in \mathfrak{a}.
\end{align*}
With these notations, we have $\pi_0 = \pi$. Moreover, $(\triangleleft_t, \triangleright_t)$ defines a deformation of the pre-Leibniz algebra $\mathfrak{a}$ if and only if
$\llbracket \pi_0 + t \pi_1 + t^2 \pi_2 + \cdots , \pi_0 + t \pi_1 + t^2 \pi_2 + \cdots \rrbracket_\mathsf{pL} = 0.$
This is equivalent to
\begin{align}\label{formal-def-eqn}
\sum_{i+j = n} \llbracket \pi_i, \pi_j \rrbracket_\mathsf{pL} = 0, ~ \text{ for } n =0, 1, 2, \ldots .
\end{align}
Note that (\ref{formal-def-eqn}) holds automatically for $n =0$ as $\llbracket \pi, \pi \rrbracket_\mathsf{pL} = 0$. However, for $n=1$, we get $\llbracket \pi, \pi_1 \rrbracket_\mathsf{pL} = 0$, which implies that the linear term $\pi_1 \in C^2_\mathsf{pL} (\mathfrak{a}, \mathfrak{a})$ is a $2$-cocycle in the cohomology of the pre-Leibniz algebra $\mathfrak{a}$ with coefficients in itself.

\begin{defn}
Two formal deformations $(\triangleleft_t, \triangleright_t)$ and $(\triangleleft_t', \triangleright_t')$ of a pre-Leibniz algebra $(\mathfrak{a}, \triangleleft, \triangleright)$ are said to be equivalent (we write $(\triangleleft_t, \triangleright_t) \sim (\triangleleft_t', \triangleright_t')$) if there is a formal power series
\begin{align*}
\phi_t = \phi_0 + t \phi_1 + t^2 \phi_2 + \cdots ,  \text{ with } \phi_i \in \mathrm{Hom}(\mathfrak{a}, \mathfrak{a}) \text{ and } \phi_0 = \mathrm{id}_\mathfrak{a} 
\end{align*}
such that the $\mathbf{k}[[t]]$-linear map $\phi_t : \mathfrak{a}[[t]] \rightarrow \mathfrak{a}[[t]]$ is a morphism of pre-Leibniz algebras from $(\mathfrak{a}[[t]], \triangleleft_t, \triangleright_t)$ to $(\mathfrak{a}[[t]], \triangleleft_t', \triangleright_t')$.
\end{defn}

The following result is straightforward, hence we omit the details.

\begin{thm}\label{inf-def-thm}
The linear terms corresponding to equivalent deformations of $\mathfrak{a}$ are cohomologous.
 In other words, there is a well-defined map
\begin{align}\label{eq-class}
(\text{space of formal deformations of }\mathfrak{a})/ \sim ~\rightarrow H^2_\mathsf{pL} (\mathfrak{a}, \mathfrak{a}).
\end{align}
\end{thm}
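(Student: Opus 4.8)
The plan is to translate the statement into the language of the differential graded Lie algebra $(C^{*+1}_\mathsf{pL}(\mathfrak{a},\mathfrak{a}), \llbracket~,~\rrbracket_\mathsf{pL}, d_\pi)$ and then run the standard Gerstenhaber-style computation. Write $\pi_t = \pi + t\pi_1 + t^2\pi_2 + \cdots$ for the Maurer-Cartan series of a formal deformation $(\triangleleft_t, \triangleright_t)$, and let $\pi_t' = \pi + t\pi_1' + \cdots$ be the series of an equivalent deformation $(\triangleleft_t', \triangleright_t')$. The equivalence is witnessed by $\phi_t = \mathrm{id}_\mathfrak{a} + t\phi_1 + t^2\phi_2 + \cdots$, which, being a morphism of pre-Leibniz algebras over $\mathbf{k}[[t]]$, satisfies
\begin{align*}
\phi_t(x \triangleleft_t y) = \phi_t(x) \triangleleft_t' \phi_t(y), \qquad \phi_t(x \triangleright_t y) = \phi_t(x) \triangleright_t' \phi_t(y),
\end{align*}
for $x, y \in \mathfrak{a}$.

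First I would extract the coefficient of $t^1$ from each of these two identities. On the left side, the $t$-coefficient of $\phi_t(x \triangleleft_t y)$ is $\phi_1(x \triangleleft y) + x \triangleleft_1 y$ (using $\phi_0 = \mathrm{id}$, $\triangleleft_0 = \triangleleft$); on the right side, the $t$-coefficient of $\phi_t(x) \triangleleft_t' \phi_t(y)$ is $\phi_1(x) \triangleleft y + x \triangleleft y' {}_1 + x \triangleleft \phi_1(y)$, where I abbreviate the $\triangleleft_1'$-term. Doing the same for $\triangleright$, and then repackaging both equations using the correspondence (\ref{pre-mc-corr}) together with the identification $\pi$-composition $\leftrightarrow$ bracket, one finds that the pair of identities at order $t$ is exactly
\begin{align*}
\pi_1 - \pi_1' = \llbracket \pi, \phi_1 \rrbracket_\mathsf{pL},
\end{align*}
where $\phi_1 \in C^1_\mathsf{pL}(\mathfrak{a},\mathfrak{a}) = \mathrm{Hom}(\mathbf{k}[C_1]\otimes\mathfrak{a},\mathfrak{a}) = \mathrm{Hom}(\mathfrak{a},\mathfrak{a})$ is viewed as a $1$-cochain. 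Since $\delta_\mathsf{pL} \phi_1 = (-1)^{1-1}\llbracket \pi, \phi_1 \rrbracket_\mathsf{pL} = \llbracket \pi, \phi_1 \rrbracket_\mathsf{pL}$ by (\ref{ad-diff}), this says precisely $\pi_1 - \pi_1' \in B^2_\mathsf{pL}(\mathfrak{a},\mathfrak{a})$, i.e.\ the linear terms are cohomologous. We already observed, from the order-$t$ piece of (\ref{formal-def-eqn}), that $\pi_1$ and $\pi_1'$ are both $2$-cocycles, so $[\pi_1] = [\pi_1'] \in H^2_\mathsf{pL}(\mathfrak{a},\mathfrak{a})$ is well defined on equivalence classes; sending a deformation to this class gives the map (\ref{eq-class}).

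The only mildly delicate point—and the step I'd expect to take the most care—is verifying that the order-$t$ expansion of the two morphism identities genuinely assembles into $\llbracket \pi, \phi_1 \rrbracket_\mathsf{pL}$ in the combinatorially decorated bracket $\llbracket~,~\rrbracket_\mathsf{pL}$, rather than just in the naive Leibniz/Balavoine bracket. This is handled cleanly by the doubling functor $D$ from the proof of Theorem \ref{mc-pl-char}(i): since $D$ is injective on cochains and intertwines $\diamond_i$ with $\circ_i$ (hence $\llbracket~,~\rrbracket_\mathsf{pL}$ with $\llbracket~,~\rrbracket_\mathsf{B}$), it suffices to check the identity $D(\pi_1 - \pi_1') = \llbracket D(\pi), D(\phi_1)\rrbracket_\mathsf{B}$ in Balavoine's graded Lie algebra on $D(\mathfrak{a})$, which is the classical Gerstenhaber computation for Leibniz algebras and is already known. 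The rest is routine bookkeeping, so the statement follows.
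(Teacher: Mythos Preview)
Your argument is correct and is exactly the standard Gerstenhaber computation the paper has in mind; the paper itself omits the proof entirely, writing only ``The following result is straightforward, hence we omit the details.'' Extracting the $t^1$-coefficient from the two morphism identities and recognising the result as $\delta_\mathsf{pL}(\phi_1)=\llbracket\pi,\phi_1\rrbracket_\mathsf{pL}$ is precisely what is intended.

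One small comment: the final paragraph invoking the doubling functor $D$ is unnecessary, and as written it is not quite complete. The pre-Leibniz morphism condition $\phi_t(\pi_t([r];x,y))=\pi_t'([r];\phi_t(x),\phi_t(y))$ is quadratic in $\phi_t$ and is not a $\diamond_i$-identity, so the fact that $D$ intertwines $\diamond_i$ with $\circ_i$ does not by itself tell you that $D(\phi_t)$ is a Leibniz-algebra equivalence between $D(\pi_t)$ and $D(\pi_t')$; that would require a separate (easy) check. It is simpler to just compute $\llbracket\pi,\phi_1\rrbracket_\mathsf{pL}$ directly: for $m=2$, $n=1$ the shuffle sets are trivial and one reads off
\[
\llbracket\pi,\phi_1\rrbracket_\mathsf{pL}([r];x,y)=\pi([r];\phi_1(x),y)+\pi([r];x,\phi_1(y))-\phi_1\bigl(\pi([r];x,y)\bigr),
\]
which is exactly the order-$t$ expansion you obtained. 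No detour is needed.
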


\begin{remark}\label{inf-def-rmk}
Note that the map (\ref{eq-class}) is not a bijection. However, one may consider a notion of infinitesimal deformations of $\mathfrak{a}$, and the set of equivalence classes of infinitesimal deformations are in one-to-one correspondence with $H^2_\mathsf{pL} (\mathfrak{a}, \mathfrak{a})$. An infinitesimal deformation is a formal deformation over the ring $\mathbf{k}[[t]]/(t^2)$. They are also called order $1$ deformations. More precisely, an infinitesimal deformation of a pre-Leibniz algebra $(\mathfrak{a}, \triangleleft, \triangleright)$ is a pair $(\triangleleft_t = \triangleleft + t \triangleleft_1, \triangleright_t = \triangleright + t \triangleright_1)$ such that $(\mathfrak{a}[[t]]/(t^2), \triangleleft_t, \triangleright_t)$ is a pre-Leibniz algebra over $\mathbf{k}[[t]]/ (t^2)$. One may also define equivalences between infinitesimal deformations. The map 
\begin{align*}
H^2_\mathsf{pL} (\mathfrak{a}, \mathfrak{a}) \rightarrow (\text{space of infinitesimal deformations of }\mathfrak{a})/ \sim
\end{align*}
is simply given by $[(\triangleleft_1, \triangleright_1)] \mapsto [(\triangleleft_t, \triangleright_t)]$, where $(\triangleleft_t = \triangleleft + t \triangleleft_1, ~\triangleright_t = \triangleright + t \triangleright_1)$.
\end{remark}

\begin{defn}
A formal deformation $(\triangleleft_t, \triangleright_t)$ of a pre-Leibniz algebra $(\mathfrak{a}, \triangleleft, \triangleright)$ is said to be trivial if $(\triangleleft_t, \triangleright_t)$ is equivalent to the underformed one $(\triangleleft_t' = \triangleleft,~ \triangleright_t' = \triangleright)$.
\end{defn}

The following result is standard, hence we omit the proof. See \cite[Proposition 1]{gers} in the case of associative algebras.

\begin{prop}
Let $(\mathfrak{a}, \triangleleft, \triangleright)$ be a pre-Leibniz algebra. If $H^2_\mathsf{pL} (\mathfrak{a}, \mathfrak{a}) = 0$ then any formal deformation $(\triangleleft_t, \triangleright_t)$ is equivalent to a deformation $(\triangleleft_t', \triangleright_t')$ of the form
\begin{align*}
(\triangleleft_t' = \triangleleft + t^p \triangleleft_p' + t^{p+1} \triangleleft_{p+1}' + \cdots,~ \triangleright_t' = \triangleright + t^p \triangleright_p' + t^{p+1} \triangleright_{p+1}' + \cdots),
\end{align*}
where the first non-vanishing term $\pi_p' = (\triangleleft_p', \triangleright_p')$ is a $2$-cocycle in the cohomology of $\mathfrak{a}$ but not a $2$-coboundary.
\end{prop}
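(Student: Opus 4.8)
The plan is to follow the classical Gerstenhaber argument adapted to the graded Lie algebra $(C^{\ast+1}_\mathsf{pL}(\mathfrak{a},\mathfrak{a}),\llbracket~,~\rrbracket_\mathsf{pL})$. Given a formal deformation $(\triangleleft_t,\triangleright_t)$, write $\pi_t = \pi + t\pi_1 + t^2\pi_2 + \cdots$ for the associated formal Maurer–Cartan element (so $\llbracket\pi_t,\pi_t\rrbracket_\mathsf{pL}=0$ over $\mathbf{k}[[t]]$), and let $p\geq 1$ be the smallest index with $\pi_p\neq 0$ (if no such $p$ exists the deformation is trivial and there is nothing to prove). The order-$p$ truncation of the deformation equation \eqref{formal-def-eqn} reads $\llbracket\pi,\pi_p\rrbracket_\mathsf{pL}=0$, i.e. $\pi_p$ is a $2$-cocycle: $d_\pi \pi_p = 0$.

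First I would use the hypothesis $H^2_\mathsf{pL}(\mathfrak{a},\mathfrak{a})=0$ to write $\pi_p = d_\pi \phi_p = \llbracket\pi,\phi_p\rrbracket_\mathsf{pL}$ for some $\phi_p\in C^1_\mathsf{pL}(\mathfrak{a},\mathfrak{a})=\mathrm{Hom}(\mathbf{k}[C_1]\otimes\mathfrak{a},\mathfrak{a})\cong\mathrm{Hom}(\mathfrak{a},\mathfrak{a})$, the last identification because $C_1$ is a singleton. Next I would form the equivalence $\phi_t = \mathrm{id}_\mathfrak{a} + t^p\phi_p$ (the formal inverse $\phi_t^{-1} = \mathrm{id}_\mathfrak{a} - t^p\phi_p + t^{2p}\phi_p^2 - \cdots$ exists over $\mathbf{k}[[t]]$) and define the gauge-transformed deformation $\pi_t' $ by transporting $\pi_t$ along $\phi_t$, i.e. setting $\triangleleft_t' = \phi_t^{-1}\circ\triangleleft_t\circ(\phi_t\otimes\phi_t)$ and similarly for $\triangleright_t'$. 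By construction $(\triangleleft_t',\triangleright_t')\sim(\triangleleft_t,\triangleright_t)$. The standard computation — which I would carry out at the level of the $\diamond_i$ operations, using that the $\mathbf{k}[C_n]$-decorations are preserved by the purely-linear map $\phi_t$ acting slot-by-slot — shows that $\pi_t' = \pi + t^p(\pi_p - d_\pi\phi_p) + (\text{higher order}) = \pi + t^{p+1}\pi_{p+1}' + \cdots$, so all terms of order $\leq p$ except the constant one have been killed.

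Iterating: if $\pi_t'$ still has a nonzero lowest term, say in degree $q > p$, it is again a $2$-cocycle by the degree-$q$ component of \eqref{formal-def-eqn} (the lower-degree brackets in that equation vanish because all intermediate terms vanish), so by $H^2_\mathsf{pL}(\mathfrak{a},\mathfrak{a})=0$ it is a coboundary and can be removed by the same gauge procedure with $\phi_t = \mathrm{id}_\mathfrak{a} + t^q\phi_q$. The composite of these successive equivalences converges $t$-adically to a well-defined formal power series automorphism, yielding an equivalence to a deformation of the asserted form; and the surviving first non-vanishing term $\pi_p'=(\triangleleft_p',\triangleright_p')$ is a $2$-cocycle but — were it a coboundary — one more gauge step would remove it, so it can be taken not to be a $2$-coboundary. (Strictly, when $H^2_\mathsf{pL}=0$ every cocycle is a coboundary, so the statement is really that the process terminates with the trivial deformation; this is why the paper phrases the next result, rigidity, as a consequence.)

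The main obstacle is the bookkeeping in the gauge-change computation: verifying that transporting a pre-Leibniz structure along a linear isomorphism $\phi_t$ has the effect $\pi_t \mapsto \pi_t'$ with $\pi_t' = \pi_t - d_\pi(\text{leading part of }\phi_t) + (\text{higher order})$ requires checking that the combinatorial maps $R^\sigma_{m;i,n}$, $S^\sigma_{m;i,n}$ controlling the $\mathbf{k}[C_n]$-labels interact correctly with $\phi_t$. This is where one invokes the naturality already exploited in the proof of Theorem \ref{mc-pl-char}: applying the diagonal-doubling functor $D(-)$ reduces the identity to the corresponding, and well-known, statement for the Balavoine bracket on $C^\ast_\mathsf{LP}(D(\mathfrak{a}),D(\mathfrak{a}))$, where $\phi_t$ becomes $D(\phi_t) = \phi_t\oplus\phi_t$. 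Hence the assertion follows from the classical Leibniz-algebra (indeed associative-algebra, cf. \cite[Proposition 1]{gers}) case, and I would simply cite that computation rather than reproduce it.
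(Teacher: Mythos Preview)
Your proposal is correct and follows exactly the approach the paper has in mind: the paper itself omits the proof entirely, declaring the result ``standard'' and referring to \cite[Proposition~1]{gers}, so your Gerstenhaber-style gauge argument (lowest nonzero term is a cocycle, hence a coboundary by $H^2_\mathsf{pL}=0$, kill it with $\phi_t=\mathrm{id}_\mathfrak{a}+t^p\phi_p$, iterate $t$-adically) is precisely the intended proof, just written out. Your parenthetical observation that under the hypothesis $H^2_\mathsf{pL}(\mathfrak{a},\mathfrak{a})=0$ the ``not a coboundary'' clause is vacuous and the real content is rigidity (Theorem~\ref{rigid-thm}) is also on point.
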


As a consequence, we get the following.

\begin{thm}\label{rigid-thm}
If $H^2_\mathsf{pL} (\mathfrak{a}, \mathfrak{a}) = 0$ then any formal deformation of the pre-Leibniz algebra $\mathfrak{a}$ is trivial.
\end{thm}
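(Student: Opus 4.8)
The plan is to obtain this as an immediate corollary of the preceding Proposition. That Proposition asserts that when $H^2_\mathsf{pL}(\mathfrak{a},\mathfrak{a}) = 0$, every formal deformation $(\triangleleft_t,\triangleright_t)$ of $\mathfrak{a}$ is equivalent to a deformation $(\triangleleft_t',\triangleright_t')$ for which the first non-vanishing higher-order term $\pi_p' = (\triangleleft_p',\triangleright_p')$, should one exist, is a $2$-cocycle that is not a $2$-coboundary. So the first step is simply to invoke this and fix such an equivalent deformation $(\triangleleft_t',\triangleright_t')$.

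The second step is a contradiction argument. Suppose $(\triangleleft_t',\triangleright_t')$ is not the undeformed one $(\triangleleft,\triangleright)$; then there is a least index $p \geq 1$ with $\pi_p' \neq 0$. By the Proposition, $\pi_p' \in Z^2_\mathsf{pL}(\mathfrak{a},\mathfrak{a}) \setminus B^2_\mathsf{pL}(\mathfrak{a},\mathfrak{a})$, so its cohomology class $[\pi_p']$ is a nonzero element of $H^2_\mathsf{pL}(\mathfrak{a},\mathfrak{a})$, contradicting the hypothesis $H^2_\mathsf{pL}(\mathfrak{a},\mathfrak{a}) = 0$. Hence no such $p$ exists, i.e. $\triangleleft_t' = \triangleleft$ and $\triangleright_t' = \triangleright$, which means $(\triangleleft_t,\triangleright_t)$ is equivalent to the undeformed deformation. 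By the definition of triviality this is exactly the assertion that $(\triangleleft_t,\triangleright_t)$ is trivial.

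The genuine work — the inductive normalization of a deformation by successively absorbing the lowest-order term into a coboundary via an equivalence $\phi_t = \mathrm{id}_\mathfrak{a} + t^k \phi_k + \cdots$ — is already carried out in the Proposition, so nothing more is strictly needed. If a self-contained argument is preferred, I would instead run the induction directly: since $\pi_1$ is a $2$-cocycle (as noted after \eqref{formal-def-eqn}) and $H^2_\mathsf{pL}(\mathfrak{a},\mathfrak{a}) = 0$, write $\pi_1 = \delta_\mathsf{pL}\phi_1$, apply the equivalence induced by $\mathrm{id}_\mathfrak{a} + t\phi_1$ to kill the linear term, observe that the resulting new lowest-order term $\pi_2$ is again a $2$-cocycle by comparing the $t^2$-coefficients in $\sum_{i+j=2}\llbracket \pi_i,\pi_j\rrbracket_\mathsf{pL} = 0$, and iterate; the only point to verify is that the equivalence transformation alters the lowest surviving term precisely by a coboundary, which follows from expanding $\llbracket\,\cdot\,,\cdot\,\rrbracket_\mathsf{pL}$ modulo higher powers of $t$. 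I do not expect a real obstacle here: the statement is a formal consequence of the preceding results, and the only mild bookkeeping lies in the optional inductive version.
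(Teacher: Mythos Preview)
Your proposal is correct and matches the paper's approach exactly: the paper states this theorem with the single phrase ``As a consequence, we get the following'' immediately after the Proposition, treating it as an immediate corollary just as you do. Your contradiction argument simply makes explicit what the paper leaves implicit.
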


\begin{remark}\label{rigid-rmk}
A pre-Leibniz algebra $\mathfrak{a}$ is said to be rigid if any formal deformation of $\mathfrak{a}$ is trivial. Thus, it follows from the above theorem that the vanishing of the second cohomology group $H^2_\mathsf{pL} (\mathfrak{a}, \mathfrak{a})$ is a sufficient condition for the rigidity of the pre-Leibniz algebra $\mathfrak{a}$.
\end{remark}

\medskip

\noindent {\bf Finite order deformations.} Here we consider finite order deformations of a pre-Leibniz algebra $\mathfrak{a}$. Given a finite order deformation of $\mathfrak{a}$, we associate a third cohomology class (called the obstruction class) in the cohomology of $\mathfrak{a}$ with coefficients in itself. When the obstruction class vanishes, the given deformation extends to a deformation of the next order.

Let $(\mathfrak{a}, \triangleleft, \triangleright)$ be a pre-Leibniz algebra. Let $N \in \mathbb{N}$ be a fixed natural number. Consider the space $\mathfrak{a}[[t]]/(t^{N+1})$ of degree $N$ polynomials in $t$ with coefficients from $\mathfrak{a}$. Then $\mathfrak{a}[[t]]/(t^{N+1})$ is a $\mathbf{k}[[t]]/(t^{N+1})$-module.

\begin{defn}
An order $N$ deformation of the pre-Leibniz algebra $(\mathfrak{a}, \triangleleft, \triangleright)$ consists of a pair $(\triangleleft_t^N, \triangleright_t^N)$ of two degree $N$ polynomials of the form
\begin{align*}
(\triangleleft_t^N = \triangleleft_0 + t \triangleleft_1 + \cdots + t^N \triangleleft_N,~ \triangleright_t^N = \triangleright_0 + t \triangleright_1 + \cdots + t^N \triangleright_N)
\end{align*}
with $\triangleleft_0 = \triangleleft$ and $\triangleright_0 = \triangleright$, such that $( \mathfrak{a}[[t]]/(t^{N+1}), \triangleleft_t^N , \triangleright_t^N  )$ is a pre-Leibniz algebra over $\mathbf{k}[[t]]/(t^{N+1})$.
\end{defn}

Using previous notations, we see that the pair $(\triangleleft_t^N, \triangleright_t^N)$ is a deformation of order $N$ if and only if $\sum_{i+j=n} \llbracket \pi_i, \pi_j \rrbracket_\mathsf{pL} = 0$, for $n=0,1, \ldots, N$. These equations can be equivalently written as
\begin{align}\label{finite-def-eq}
\delta_\mathsf{pL} (\pi_n) := - \llbracket \pi, \pi_n \rrbracket_\mathsf{pL} =  \frac{1}{2} \sum_{i+j = n; i, j \geq 1} \llbracket \pi_i, \pi_j \rrbracket_\mathsf{pL}, ~\text{ for } n =0, 1, \ldots, N.
\end{align}
Motivated by the identities in (\ref{finite-def-eq}), we define an element ${Ob}_{(\triangleleft_t^N, \triangleright_t^N)} \in C^3_\mathsf{pL} (\mathfrak{a}, \mathfrak{a})$ by
\begin{align*}
{Ob}_{(\triangleleft_t^N, \triangleright_t^N)} :=  \frac{1}{2} \sum_{i+j = N+1; i, j \geq 1} \llbracket \pi_i, \pi_j \rrbracket_\mathsf{pL}.
\end{align*}

\begin{prop}
The element ${Ob}_{(\triangleleft_t^N, \triangleright_t^N)}$ is a $3$-cocycle, i.e., ${Ob}_{(\triangleleft_t^N, \triangleright_t^N)} \in Z^3_\mathsf{pL} (\mathfrak{a}, \mathfrak{a}).$
\end{prop}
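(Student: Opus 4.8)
The plan is to show that $\delta_{\mathsf{pL}}\big({Ob}_{(\triangleleft_t^N,\triangleright_t^N)}\big)=0$ by a purely algebraic computation in the graded Lie algebra $(C^{\ast+1}_{\mathsf{pL}}(\mathfrak{a},\mathfrak{a}),\llbracket~,~\rrbracket_{\mathsf{pL}})$, using only the graded Jacobi identity together with the deformation equations \eqref{finite-def-eq} that hold up to order $N$. Recall that on $C^3_{\mathsf{pL}}(\mathfrak{a},\mathfrak{a})$ the differential is $\delta_{\mathsf{pL}}(f)=(-1)^{3-1}\llbracket\pi,f\rrbracket_{\mathsf{pL}}=\llbracket\pi,f\rrbracket_{\mathsf{pL}}$ by \eqref{ad-diff} (with $\pi=\pi_0$).

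First I would write
\begin{align*}
\delta_{\mathsf{pL}}\big({Ob}_{(\triangleleft_t^N,\triangleright_t^N)}\big)
= \llbracket \pi_0, \tfrac{1}{2}\sum_{i+j=N+1;\, i,j\geq 1}\llbracket\pi_i,\pi_j\rrbracket_{\mathsf{pL}}\rrbracket_{\mathsf{pL}}
= \sum_{i+j=N+1;\, i,j\geq 1}\llbracket\pi_0,\llbracket\pi_i,\pi_j\rrbracket_{\mathsf{pL}}\rrbracket_{\mathsf{pL}},
\end{align*}
where the factor $\tfrac12$ disappears because the sum is symmetric in $i\leftrightarrow j$. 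Then I would apply the graded Jacobi identity to each term $\llbracket\pi_0,\llbracket\pi_i,\pi_j\rrbracket_{\mathsf{pL}}\rrbracket_{\mathsf{pL}}$, rewriting it in terms of $\llbracket\llbracket\pi_0,\pi_i\rrbracket_{\mathsf{pL}},\pi_j\rrbracket_{\mathsf{pL}}$ and $\llbracket\pi_i,\llbracket\pi_0,\pi_j\rrbracket_{\mathsf{pL}}\rrbracket_{\mathsf{pL}}$ (all brackets here are of even degree-shifted elements, so the signs in the Jacobi identity are all $+$). After reindexing, the whole expression becomes a sum, over triples $(a,b,c)$ with $a+b+c=N+1$ and $b,c\geq 1$ (and $a\geq 0$), of terms of the shape $\llbracket\llbracket\pi_a,\pi_b\rrbracket_{\mathsf{pL}},\pi_c\rrbracket_{\mathsf{pL}}$. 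Next I would substitute the deformation equations: for each fixed $c$ with $1\leq c\leq N+1$, the inner sum $\sum_{a+b=N+1-c}\llbracket\pi_a,\pi_b\rrbracket_{\mathsf{pL}}$ runs over a total index $N+1-c\leq N$, and since the deformation has order $N$, this inner sum vanishes by \eqref{formal-def-eqn}/\eqref{finite-def-eq}. Hence every grouped term is zero, and $\delta_{\mathsf{pL}}\big({Ob}_{(\triangleleft_t^N,\triangleright_t^N)}\big)=0$.

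The main obstacle — really the only nontrivial bookkeeping — is making the reindexing and the symmetrization precise so that the final sum genuinely organizes into groups $\sum_{a+b=m}\llbracket\pi_a,\pi_b\rrbracket_{\mathsf{pL}}$ for $m\leq N$, with the correct multiplicities and signs, rather than picking up a stray term that involves an order-$(N+1)$ combination. The cleanest way to handle this is the standard trick used for the associative and Leibniz cases: introduce the formal sum $\pi_t^N=\pi_0+t\pi_1+\cdots+t^N\pi_N$ and observe that $\tfrac12\llbracket\pi_t^N,\pi_t^N\rrbracket_{\mathsf{pL}}=\sum_{n\geq 0}t^n\big(\tfrac12\sum_{i+j=n}\llbracket\pi_i,\pi_j\rrbracket_{\mathsf{pL}}\big)$ has vanishing coefficients of $t^0,\ldots,t^N$ by the order-$N$ deformation equations, and its coefficient of $t^{N+1}$ is exactly ${Ob}_{(\triangleleft_t^N,\triangleright_t^N)}$; then the graded Jacobi identity gives $\llbracket\pi_t^N,\llbracket\pi_t^N,\pi_t^N\rrbracket_{\mathsf{pL}}\rrbracket_{\mathsf{pL}}=0$ identically, and extracting the coefficient of $t^{N+1}$ from $\tfrac12\llbracket\pi_t^N,\llbracket\pi_t^N,\pi_t^N\rrbracket_{\mathsf{pL}}\rrbracket_{\mathsf{pL}}$ — noting that all contributions except $\llbracket\pi_0,{Ob}_{(\triangleleft_t^N,\triangleright_t^N)}\rrbracket_{\mathsf{pL}}$ pair $\pi_0$ (or a higher $\pi_a$) against an already-vanishing bracket of total order $\leq N$ — yields $\llbracket\pi_0,{Ob}_{(\triangleleft_t^N,\triangleright_t^N)}\rrbracket_{\mathsf{pL}}=0$, i.e. $\delta_{\mathsf{pL}}\big({Ob}_{(\triangleleft_t^N,\triangleright_t^N)}\big)=0$. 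This completes the proof that ${Ob}_{(\triangleleft_t^N,\triangleright_t^N)}\in Z^3_{\mathsf{pL}}(\mathfrak{a},\mathfrak{a})$.
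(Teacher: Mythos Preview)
Your proof is correct and follows essentially the same route as the paper: apply the graded Jacobi identity to $\llbracket\pi_0,\llbracket\pi_i,\pi_j\rrbracket_{\mathsf{pL}}\rrbracket_{\mathsf{pL}}$ and then use the deformation equations \eqref{finite-def-eq} to kill the resulting terms. The paper does the bookkeeping by direct substitution of $\llbracket\pi,\pi_i\rrbracket_{\mathsf{pL}}=-\tfrac12\sum_{i_1+i_2=i}\llbracket\pi_{i_1},\pi_{i_2}\rrbracket_{\mathsf{pL}}$ followed by a triple-bracket Jacobi cancellation, whereas your formal power series packaging via $\pi_t^N$ and the identity $\llbracket x,\llbracket x,x\rrbracket\rrbracket_{\mathsf{pL}}=0$ for degree-$1$ elements is just a slightly slicker way of organizing the identical computation.
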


\begin{proof}
We have
\begin{align*}
&\delta_\mathsf{pL} \big(   \frac{1}{2} \sum_{i+j = N+1; i, j \geq 1} \llbracket \pi_i, \pi_j \rrbracket_\mathsf{pL} \big) \\
&= \frac{1}{2} \sum_{i+j = N+1; i, j \geq 1} \llbracket \pi, \llbracket \pi_i, \pi_j \rrbracket_\mathsf{pL} \rrbracket_\mathsf{pL} \\
&= \frac{1}{2} \sum_{i+j = N+1; i, j \geq 1} \big(  \llbracket  \llbracket \pi, \pi_i \rrbracket_\mathsf{pL}, \pi_j \rrbracket_\mathsf{pL} - \llbracket \pi_i, \llbracket \pi, \pi_j \rrbracket_\mathsf{pL} \rrbracket_\mathsf{pL}   \big) \\
&= - \frac{1}{4} \sum_{i_1 + i_2 + j = N+1; i_1, i_2, j \geq 1} \llbracket  \llbracket \pi_{i_1}, \pi_{i_2} \rrbracket_\mathsf{pL}, \pi_j \rrbracket_\mathsf{pL} + \frac{1}{4} \sum_{i+ j_1 + j_2 = N+1; i, j_1, j_2 \geq 1} \llbracket \pi_i, \llbracket \pi_{j_1}, \pi_{j_2} \rrbracket_\mathsf{pL} \rrbracket_\mathsf{pL} \\
&= - \frac{1}{2} \sum_{i+j +k = N+1; i, j, k \geq 1} \llbracket  \llbracket \pi_{i}, \pi_{j} \rrbracket_\mathsf{pL}, \pi_k \rrbracket_\mathsf{pL} = 0.
\end{align*}
Hence the result follows.
\end{proof}

It follows from the above proposition that an order $N$ deformation $(\triangleleft_t^N, \triangleright_t^N)$ of the pre-Leibniz algebra $\mathfrak{a}$ associates a cohomology class $[{Ob}_{(\triangleleft_t^N, \triangleright_t^N)}] \in H^3_\mathsf{pL} (\mathfrak{a}, \mathfrak{a})$, called the obstruction class. 

\begin{defn}
An order $N$ deformation $(\triangleleft_t^N, \triangleright_t^N)$ of the pre-Leibniz algebra $\mathfrak{a}$ is said to be extensible if there exists an element $\pi_{N+1} = (\triangleleft_{N+1}, \triangleright_{N+1}) \in C^2_\mathsf{pL} (\mathfrak{a}, \mathfrak{a})$ such that 
\begin{align*}
(\triangleleft_t^{N+1} := \triangleleft_t^N + t^{N+1}~ \triangleleft_{N+1}, ~ \triangleright_t^{N+1} := \triangleright_t^N + t^{N+1}~ \triangleright_{N+1}) ~\text{ is a deformation of order } N+1.
\end{align*}
\end{defn}

The following theorem gives a necessary and sufficient condition for the extensibility of a finite order deformation.

\begin{thm}\label{obs-thm}
An order $N$ deformation $(\triangleleft_t^N, \triangleright_t^N)$ of a pre-Leibniz algebra $\mathfrak{a}$ is extensible if and only if the corresponding obstruction class $[{Ob}_{(\triangleleft_t^N, \triangleright_t^N)}] \in H^3_\mathsf{pL} (\mathfrak{a}, \mathfrak{a})$ is trivial.
\end{thm}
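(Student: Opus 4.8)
The plan is to exploit the differential graded Lie algebra structure on $(C^{\ast+1}_\mathsf{pL}(\mathfrak{a},\mathfrak{a}),\llbracket~,~\rrbracket_\mathsf{pL},d_\pi)$ together with the deformation equations (\ref{finite-def-eq}). Recall that an order $N$ deformation $(\triangleleft_t^N,\triangleright_t^N)$ satisfies $\delta_\mathsf{pL}(\pi_n)=\frac{1}{2}\sum_{i+j=n;\,i,j\geq 1}\llbracket \pi_i,\pi_j\rrbracket_\mathsf{pL}$ for $n=0,1,\ldots,N$, and that the obstruction cochain is $Ob_{(\triangleleft_t^N,\triangleright_t^N)}=\frac{1}{2}\sum_{i+j=N+1;\,i,j\geq 1}\llbracket\pi_i,\pi_j\rrbracket_\mathsf{pL}$, which the previous proposition shows is a $3$-cocycle. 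The statement to prove is that the order $N$ deformation extends to order $N+1$ if and only if $[Ob_{(\triangleleft_t^N,\triangleright_t^N)}]=0$ in $H^3_\mathsf{pL}(\mathfrak{a},\mathfrak{a})$.

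For the forward direction, I would assume an extension $\pi_{N+1}\in C^2_\mathsf{pL}(\mathfrak{a},\mathfrak{a})$ exists, so that $(\triangleleft_t^{N+1},\triangleright_t^{N+1})$ is a deformation of order $N+1$. Writing out equation (\ref{finite-def-eq}) for $n=N+1$ gives $\delta_\mathsf{pL}(\pi_{N+1})=\frac{1}{2}\sum_{i+j=N+1;\,i,j\geq 1}\llbracket\pi_i,\pi_j\rrbracket_\mathsf{pL}=Ob_{(\triangleleft_t^N,\triangleright_t^N)}$, so the obstruction cochain is a coboundary and hence $[Ob_{(\triangleleft_t^N,\triangleright_t^N)}]=0$. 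Conversely, if $[Ob_{(\triangleleft_t^N,\triangleright_t^N)}]=0$, then there exists some $\pi_{N+1}\in C^2_\mathsf{pL}(\mathfrak{a},\mathfrak{a})$ with $\delta_\mathsf{pL}(\pi_{N+1})=Ob_{(\triangleleft_t^N,\triangleright_t^N)}$; equivalently $-\llbracket\pi,\pi_{N+1}\rrbracket_\mathsf{pL}=\frac{1}{2}\sum_{i+j=N+1;\,i,j\geq 1}\llbracket\pi_i,\pi_j\rrbracket_\mathsf{pL}$. I would then set $\triangleleft_t^{N+1}=\triangleleft_t^N+t^{N+1}\triangleleft_{N+1}$ and $\triangleright_t^{N+1}=\triangleright_t^N+t^{N+1}\triangleright_{N+1}$ where $\pi_{N+1}=(\triangleleft_{N+1},\triangleright_{N+1})$, and verify that $\sum_{i+j=n}\llbracket\pi_i,\pi_j\rrbracket_\mathsf{pL}=0$ holds for all $n=0,1,\ldots,N+1$. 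For $n\leq N$ this is automatic since $(\triangleleft_t^N,\triangleright_t^N)$ is already an order $N$ deformation and the new term $\pi_{N+1}$ contributes nothing in degrees $\leq N$; for $n=N+1$ it is exactly the chosen relation on $\pi_{N+1}$. Hence $(\triangleleft_t^{N+1},\triangleright_t^{N+1})$ is a deformation of order $N+1$, so the original deformation is extensible.

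The only genuinely delicate point is bookkeeping: one must check that introducing $t^{N+1}\pi_{N+1}$ does not disturb the lower-order equations, which follows because $\llbracket\pi_i,\pi_j\rrbracket_\mathsf{pL}$ with $i+j\leq N$ never involves $\pi_{N+1}$, and the equation at level $n=N+1$ in (\ref{finite-def-eq}) is precisely $\delta_\mathsf{pL}(\pi_{N+1})=Ob_{(\triangleleft_t^N,\triangleright_t^N)}$ after splitting off the $i=0$ and $j=0$ terms $\llbracket\pi_0,\pi_{N+1}\rrbracket_\mathsf{pL}+\llbracket\pi_{N+1},\pi_0\rrbracket_\mathsf{pL}=2\llbracket\pi,\pi_{N+1}\rrbracket_\mathsf{pL}=-2\delta_\mathsf{pL}(\pi_{N+1})$. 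I expect this to be entirely routine given the graded-Lie-algebra formalism already set up, with the structure of the argument being the standard Gerstenhaber obstruction-theory pattern transported to the pre-Leibniz setting.

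\begin{proof}
Let $(\triangleleft_t^N,\triangleright_t^N)$ be an order $N$ deformation of $\mathfrak{a}$, with associated elements $\pi_0=\pi,\pi_1,\ldots,\pi_N\in C^2_\mathsf{pL}(\mathfrak{a},\mathfrak{a})$ satisfying $\sum_{i+j=n}\llbracket\pi_i,\pi_j\rrbracket_\mathsf{pL}=0$ for $n=0,1,\ldots,N$, equivalently the identities (\ref{finite-def-eq}).

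Suppose first that the deformation is extensible, say via $\pi_{N+1}=(\triangleleft_{N+1},\triangleright_{N+1})\in C^2_\mathsf{pL}(\mathfrak{a},\mathfrak{a})$, so that $\sum_{i+j=n}\llbracket\pi_i,\pi_j\rrbracket_\mathsf{pL}=0$ also holds for $n=N+1$. Separating the terms with $i=0$ or $j=0$ from this identity and using $\llbracket\pi_0,\pi_{N+1}\rrbracket_\mathsf{pL}=\llbracket\pi_{N+1},\pi_0\rrbracket_\mathsf{pL}$, we obtain
\begin{align*}
2\llbracket\pi,\pi_{N+1}\rrbracket_\mathsf{pL} + \sum_{i+j=N+1;\,i,j\geq 1}\llbracket\pi_i,\pi_j\rrbracket_\mathsf{pL} = 0,
\end{align*}
that is, $\delta_\mathsf{pL}(\pi_{N+1}) = -\llbracket\pi,\pi_{N+1}\rrbracket_\mathsf{pL} = \frac{1}{2}\sum_{i+j=N+1;\,i,j\geq 1}\llbracket\pi_i,\pi_j\rrbracket_\mathsf{pL} = Ob_{(\triangleleft_t^N,\triangleright_t^N)}$. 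Hence $Ob_{(\triangleleft_t^N,\triangleright_t^N)}\in B^3_\mathsf{pL}(\mathfrak{a},\mathfrak{a})$ and $[Ob_{(\triangleleft_t^N,\triangleright_t^N)}]=0$ in $H^3_\mathsf{pL}(\mathfrak{a},\mathfrak{a})$.

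Conversely, suppose $[Ob_{(\triangleleft_t^N,\triangleright_t^N)}]=0$. Since $Ob_{(\triangleleft_t^N,\triangleright_t^N)}$ is a $3$-cocycle, there exists $\pi_{N+1}\in C^2_\mathsf{pL}(\mathfrak{a},\mathfrak{a})$ with $\delta_\mathsf{pL}(\pi_{N+1})=Ob_{(\triangleleft_t^N,\triangleright_t^N)}$, i.e.
\begin{align*}
-\llbracket\pi,\pi_{N+1}\rrbracket_\mathsf{pL} = \frac{1}{2}\sum_{i+j=N+1;\,i,j\geq 1}\llbracket\pi_i,\pi_j\rrbracket_\mathsf{pL}.
\end{align*}
Write $\pi_{N+1}=(\triangleleft_{N+1},\triangleright_{N+1})$ and set $\triangleleft_t^{N+1}=\triangleleft_t^N+t^{N+1}\triangleleft_{N+1}$ and $\triangleright_t^{N+1}=\triangleright_t^N+t^{N+1}\triangleright_{N+1}$. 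We check that $\sum_{i+j=n}\llbracket\pi_i,\pi_j\rrbracket_\mathsf{pL}=0$ for all $n=0,1,\ldots,N+1$. For $n\leq N$ no term involves $\pi_{N+1}$, so the identity holds because $(\triangleleft_t^N,\triangleright_t^N)$ is an order $N$ deformation. For $n=N+1$, separating the terms with $i=0$ or $j=0$ gives
\begin{align*}
\sum_{i+j=N+1}\llbracket\pi_i,\pi_j\rrbracket_\mathsf{pL} = 2\llbracket\pi,\pi_{N+1}\rrbracket_\mathsf{pL} + \sum_{i+j=N+1;\,i,j\geq 1}\llbracket\pi_i,\pi_j\rrbracket_\mathsf{pL} = 0
\end{align*}
by the displayed relation on $\pi_{N+1}$. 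Therefore $(\triangleleft_t^{N+1},\triangleright_t^{N+1})$ is a deformation of order $N+1$, and the order $N$ deformation $(\triangleleft_t^N,\triangleright_t^N)$ is extensible.
\end{proof}
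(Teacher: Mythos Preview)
Your proof is correct and follows essentially the same approach as the paper's own proof: both directions amount to observing that the order-$(N{+}1)$ deformation equation is precisely $\delta_\mathsf{pL}(\pi_{N+1})=Ob_{(\triangleleft_t^N,\triangleright_t^N)}$. Your version is slightly more explicit in separating the $i=0$ and $j=0$ terms and in checking that the lower-order equations are unaffected, but the argument is otherwise identical to the paper's.
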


\begin{proof}
Suppose the order $N$ deformation $(\triangleleft^N_t, \triangleright^N_t)$ is extensible. Let $\pi_{N+1} = (\triangleleft_{N+1}, \triangleright_{N+1})$ be an element which makes $(\triangleleft^{N+1}_t, \triangleright^{N+1}_t)$ into a deformation of order $N+1$. Then it follows that
\begin{align*}
\delta_\mathsf{pL} (\pi_{N+1}) = \frac{1}{2} \sum_{i+j = N+1; i, j \geq 1} \llbracket \pi_i, \pi_j \rrbracket_\mathsf{pL} := {Ob}_{(\triangleleft_t^N, \triangleright_t^N)}.
\end{align*}
This shows that ${Ob}_{(\triangleleft_t^N, \triangleright_t^N)}$ is a coboundary. Hence the cohomology class $[{Ob}_{(\triangleleft_t^N, \triangleright_t^N)}]$ is trivial.

Conversely, let $(\triangleleft_t^N, \triangleright_t^N)$ be an order $N$ deformation such that the cohomology class $[{Ob}_{(\triangleleft_t^N, \triangleright_t^N)}]$ is trivial. Therefore, ${Ob}_{(\triangleleft_t^N, \triangleright_t^N)}$ is a coboundary, say ${Ob}_{(\triangleleft_t^N, \triangleright_t^N)} = \delta_\mathsf{pL} (\pi_{N+1})$, where $\pi_{N+1} = ( \triangleleft_{N+1}, \triangleright_{N+1})$. Then it is easy to see that $(\triangleleft_t^{N+1} := \triangleleft_t^N + t^{N+1}~ \triangleleft_{N+1}, ~ \triangleright_t^{N+1} := \triangleright_t^N + t^{N+1}~ \triangleright_{N+1})$ is a deformation of order $N+1.$ In other words, $(\triangleleft^N_t, \triangleright^N_t)$ is extensible.
\end{proof}

\begin{corollary}
Let $\mathfrak{a}$ be a pre-Leibniz algebra with $H^3_\mathsf{pL} (\mathfrak{a}, \mathfrak{a}) = 0$. Then
\begin{itemize}
\item[(i)] any finite order deformation is extensible;
\item[(ii)] any $2$-cocycle in $Z^2_\mathsf{pL} (\mathfrak{a}, \mathfrak{a})$ is the linear term of some formal deformation of $\mathfrak{a}$.
\end{itemize}
\end{corollary}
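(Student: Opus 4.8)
The plan is to derive both parts as direct consequences of Theorem~\ref{obs-thm} and the machinery built around the obstruction class. For part (i), I would argue as follows. Let $(\triangleleft_t^N, \triangleright_t^N)$ be any order $N$ deformation of $\mathfrak{a}$. By the proposition preceding Theorem~\ref{obs-thm}, the element ${Ob}_{(\triangleleft_t^N, \triangleright_t^N)} = \frac{1}{2}\sum_{i+j=N+1;\, i,j\geq 1}\llbracket \pi_i, \pi_j\rrbracket_\mathsf{pL}$ lies in $Z^3_\mathsf{pL}(\mathfrak{a},\mathfrak{a})$, so it determines a class $[{Ob}_{(\triangleleft_t^N, \triangleright_t^N)}] \in H^3_\mathsf{pL}(\mathfrak{a},\mathfrak{a})$. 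Since $H^3_\mathsf{pL}(\mathfrak{a},\mathfrak{a}) = 0$ by hypothesis, this class is automatically trivial. Theorem~\ref{obs-thm} then tells us the deformation is extensible. As $N$ was arbitrary, every finite order deformation is extensible.

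For part (ii), I would take a $2$-cocycle $\pi_1 = (\triangleleft_1, \triangleright_1) \in Z^2_\mathsf{pL}(\mathfrak{a},\mathfrak{a})$, i.e. $\delta_\mathsf{pL}(\pi_1) = -\llbracket \pi, \pi_1\rrbracket_\mathsf{pL} = 0$. The observation is that $(\triangleleft_t^1 := \triangleleft + t\triangleleft_1,\ \triangleright_t^1 := \triangleright + t\triangleright_1)$ is an order $1$ deformation: the defining equation $\sum_{i+j=n}\llbracket \pi_i, \pi_j\rrbracket_\mathsf{pL} = 0$ holds for $n=0$ because $\llbracket\pi,\pi\rrbracket_\mathsf{pL}=0$, and for $n=1$ it reads $2\llbracket\pi,\pi_1\rrbracket_\mathsf{pL} = 0$, which is exactly the cocycle condition. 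Now apply part (i) repeatedly: the order $1$ deformation extends to order $2$, that to order $3$, and so on; taking the union of these extensions produces a formal deformation $(\triangleleft_t, \triangleright_t)$ whose linear coefficient is $\pi_1$. I would note that this inductive extension is well-defined since each extension step only adds a term in degree $N+1$ and leaves the lower-degree terms unchanged, so the limiting formal power series is genuine.

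The only mildly delicate point — and I expect this to be the main thing to spell out carefully rather than a genuine obstacle — is the passage from "extensible at every finite stage" to "there exists a formal deformation." One must observe that the choice of $\pi_{N+1}$ at each stage does not disturb $\pi_0, \dots, \pi_N$, so the sequence $(\pi_i)_{i\geq 0}$ obtained by iterating is coherent and defines formal power series $\triangleleft_t = \sum_i t^i \triangleleft_i$ and $\triangleright_t = \sum_i t^i \triangleright_i$; the Maurer–Cartan equation $\sum_{i+j=n}\llbracket \pi_i, \pi_j\rrbracket_\mathsf{pL} = 0$ then holds for every $n$ by construction, so $(\triangleleft_t, \triangleright_t)$ is a formal deformation in the sense of the definition above. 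Everything else is immediate from the cited results, so the proof is short.
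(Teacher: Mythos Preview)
Your proposal is correct and matches the paper's intended approach: the corollary is stated in the paper without proof, as an immediate consequence of Theorem~\ref{obs-thm}, and your argument spells out exactly that deduction. The extra care you take in passing from ``extensible at every finite stage'' to a genuine formal power series is a reasonable clarification but not something the paper addresses explicitly.
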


\section{Homotopy pre-Leibniz algebras}\label{sec-6}
In this section, we first recall the notion of Leibniz$_\infty$-algebras from \cite{ammar-poncin}. Then we define pre-Leibniz$_\infty$-algebras and show that they give rise to Leibniz$_\infty$-algebras. We also define Rota-Baxter operators on Leibniz$_\infty$-algebras that induce pre-Leibniz$_\infty$-algebras. Finally, we consider skeletal and strict pre-Leibniz$_\infty$-algebras and their classification results.


\begin{defn}\cite{ammar-poncin} A Leibniz$_\infty$-algebra is a graded vector space $\mathfrak{g} = \oplus_{n \in \mathbb{Z}} \mathfrak{g}_n$
together with a collection $\{ \mu_k : \mathfrak{g}^{\otimes k} \rightarrow \mathfrak{g} ~|~ \mathrm{deg} (\mu_k ) = 2-k \}_{k \geq 1}$ of multilinear maps on $\mathfrak{g}$ satisfying the following identities
\begin{align}\label{leib-inf-iden}
\sum_{i+j = n+1} \sum_{\lambda = 1}^i & \sum_{\sigma \in Sh (\lambda -1, j-1)} \chi (\sigma) (-1)^{\lambda (j-1)} (-1)^{j (|x_{\sigma (1)}| + \cdots + |x_{\sigma (\lambda -1)}|)} \\
& \mu_i \big( x_{\sigma (1)}, \ldots, x_{\sigma (\lambda -1)}, \mu_j (x_{\sigma (\lambda)}, \ldots, x_{\sigma (\lambda + j -2)}, x_{\lambda + j -1}), x_{\lambda + j}, \ldots, x_n  \big) = 0, \nonumber
\end{align}
for $n = 1, 2, \ldots .$ A Leibniz$_\infty$-algebra as above may be denoted by $(\mathfrak{g}, \{\mu_k\}_{k \geq 1})$.
\end{defn}


\medskip

We will now define pre-Leibniz$_\infty$-algebras using the combinatorial maps defined in (\ref{comb-maps}). The definition will be justified by the subsequent results that are analogous to other homotopy type algebras (see for instance \cite{baez}).

\begin{defn}
A pre-Leibniz$_\infty$-algebra is a graded vector space $\mathfrak{a} = \oplus_{n \in \mathbb{Z}} \mathfrak{a}_n$ with a collection $\{  \pi_k : \mathbf{k}[C_k] \otimes \mathfrak{a}^{\otimes k} \rightarrow \mathfrak{a} ~|~ \mathrm{deg}(\pi_k) = 2-k \}_{k \geq 1}$ of maps satisfying
\begin{align}\label{pl-inf-iden}
\sum_{i+j = n+1} & \sum_{\lambda = 1}^i  \sum_{\sigma \in Sh (\lambda -1, j-1)} \chi (\sigma) (-1)^{\lambda (j-1)} (-1)^{j (|x_{\sigma (1)}| + \cdots + |x_{\sigma (\lambda -1)}|)} \\
& \pi_i \big( R^\sigma_{i; \lambda, j }[r]; x_{\sigma (1)}, \ldots, x_{\sigma (\lambda -1)}, \pi_j ( S^\sigma_{i; \lambda, j} [r]; x_{\sigma (\lambda)}, \ldots, x_{\sigma (\lambda + j -2)}, x_{\lambda + j -1}), \ldots, x_n  \big) = 0, \nonumber
\end{align}
for $[r] \in C_n$ and $n = 1, 2, \ldots $. A pre-Leibniz$_\infty$-algebra may be denoted by $(\mathfrak{a}, \{\pi_k \}_{k \geq 1})$.
\end{defn}




In Proposition \ref{pre-leib-sum}, we have seen that a pre-Leibniz algebra induces a Leibniz algebra. The following theorem generalizes this in the homotopy context.

\begin{thm}\label{pl-l-inf}
Let $(\mathfrak{a}, \{\pi_k \}_{k \geq 1})$ be a pre-Leibniz$_\infty$-algebra. Then $(\mathfrak{a}, \{\mu_k \}_{k \geq 1})$ is a Leibniz$_\infty$-algebra, where
\begin{align*}
\mu_k : \mathfrak{a}^{\otimes k} \rightarrow \mathfrak{a}, ~ \mu_k (x_1, \ldots, x_k)  := \sum_{r=1}^k \pi_k ([r]; x_1, \ldots, x_k), ~ \text{ for } k \geq 1.
\end{align*}
\end{thm}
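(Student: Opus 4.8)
The plan is to reduce this to the \emph{linear} fact that ``summing over the box label'' intertwines the two combinatorial structures, exactly as was done for the degree-zero case in Theorem \ref{rel-lp-coho}, Case I. Concretely, I would introduce the ``total'' operations $\mu_k(x_1,\dots,x_k) = \sum_{r=1}^{k}\pi_k([r];x_1,\dots,x_k)$ and then verify that, for each fixed $i,j,\lambda$ and each shuffle $\sigma$, applying $\sum_{[r]\in C_n}(\,\cdot\,)$ to the inner term
\[
\pi_i\big(R^\sigma_{i;\lambda,j}[r];\,x_{\sigma(1)},\dots,\pi_j(S^\sigma_{i;\lambda,j}[r];x_{\sigma(\lambda)},\dots,x_{\lambda+j-1}),\dots,x_n\big)
\]
produces precisely
\[
\mu_i\big(x_{\sigma(1)},\dots,x_{\sigma(\lambda-1)},\mu_j(x_{\sigma(\lambda)},\dots,x_{\sigma(\lambda+j-2)},x_{\lambda+j-1}),x_{\lambda+j},\dots,x_n\big).
\]
Granting this, summing the defining identity \eqref{pl-inf-iden} of the pre-Leibniz$_\infty$-algebra over $[r]\in C_n$ yields term-by-term the defining identity \eqref{leib-inf-iden} of a Leibniz$_\infty$-algebra, with the sign factors $\chi(\sigma)(-1)^{\lambda(j-1)}(-1)^{j(|x_{\sigma(1)}|+\cdots+|x_{\sigma(\lambda-1)}|)}$ carried through unchanged since they do not depend on $[r]$.

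The key combinatorial lemma is the box-counting identity: for a fixed placement of the labels $[1],\dots,[n]$ into $i$ boxes (with $j$ of them in the $\lambda$-th box), as $[r]$ ranges over $C_n$ the pair $(R^\sigma_{i;\lambda,j}[r],\,S^\sigma_{i;\lambda,j}[r])$ ranges in such a way that: for the $m$-th outer box with $m\neq\lambda$, there is exactly one $[r]$ landing there and it contributes the single term $\pi_i([m];\dots)$, i.e.\ summing the outer labels reconstructs $\mu_i$ in the slot not occupied by the inner output; and for the $j$ labels $[r]$ landing in the $\lambda$-th box, $R^\sigma$ is constant equal to $[\lambda]$ while $S^\sigma$ runs over $[1],\dots,[j]$, so summing reconstructs $\mu_j$ in the inner slot (and the output of that $\mu_j$ sits in the $\lambda$-th argument of $\mu_i$, as required). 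This is exactly the behaviour of $R^\sigma,S^\sigma$ recorded right after \eqref{comb-maps}, just applied fibrewise. I would phrase this as: $\sum_{[r]\in C_n}$ of the composite equals the corresponding composite of $\mu$'s, which is the graded analogue of the identity $\Phi_{m+n-1}(f\diamond_i g)=\Phi_m(f)\circ_i\Phi_n(g)$ already used in the proof of Theorem \ref{rel-lp-coho}.

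Alternatively — and this is probably the cleanest write-up — I would reuse the ``double vector space'' trick from the proof of Theorem \ref{mc-pl-char}(i): let $D(\mathfrak{a})=\mathfrak{a}\oplus\mathfrak{a}$ as graded vector spaces and observe that a pre-Leibniz$_\infty$ structure $\{\pi_k\}$ on $\mathfrak{a}$ is equivalent to a Leibniz$_\infty$ structure $\{D(\pi_k)\}$ on $D(\mathfrak{a})$, after which the total operations $\mu_k$ are recovered by restricting along the diagonal-type inclusion $\mathfrak{a}\hookrightarrow D(\mathfrak{a})$; one then checks that this inclusion is a strict morphism of Leibniz$_\infty$-algebras. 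Either way, the main obstacle is purely bookkeeping: one must be scrupulous that the Koszul signs $(-1)^{j(|x_{\sigma(1)}|+\cdots+|x_{\sigma(\lambda-1)}|)}$ and the shuffle signs $\chi(\sigma)$ are unaffected by the passage to $\mu$ (they are, since the reindexing of inputs done by $\sigma$ and the degrees $|x_\ell|$ are the same on both sides and the label $[r]$ carries no degree), and that collapsing the $j$ inner labels into a single sum does not disturb the outer degree shift $\mathrm{deg}(\mu_j)=2-j$. No genuinely new input beyond \eqref{pl-inf-iden} and the definitions of $R^\sigma,S^\sigma$ is needed.
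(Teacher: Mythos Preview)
Your primary approach --- summing \eqref{pl-inf-iden} over all $[r]\in C_n$ and then, for each fixed $(i,j,\lambda,\sigma)$, splitting according to which box $[r]$ lands in so that the term collapses to $\mu_i(\dots,\mu_j(\dots),\dots)$ --- is exactly the paper's proof; the paper carries this out by partitioning the sum into $r\in\{\sigma(1),\dots,\sigma(\lambda-1)\}$, $r$ in the $\lambda$-th box, and $r\in\{\lambda+j,\dots,n\}$, recording precisely the box-counting identity you describe. The $D(\mathfrak{a})$ alternative you sketch is not pursued by the paper for this theorem.
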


\begin{proof}
Since $(\mathfrak{a}, \{ \pi_k \}_{k \geq 1})$ is a pre-Leibniz$_\infty$-algebra, it follows that (\ref{pl-inf-iden}) holds for any $n \geq 1$ and $[r] \in C_n$. For a fixed $n \geq 1$, by adding these identities for all $[r] \in C_n$, we get
\begin{align}\label{iden-00}
\sum_{i+j = n+1} &\sum_{\lambda = 1}^i  \sum_{\sigma \in Sh (\lambda -1, j-1)} \chi (\sigma) (-1)^{\lambda (j-1)} (-1)^{j (|x_{\sigma (1)}| + \cdots + |x_{\sigma (\lambda -1)}|)} \nonumber \\ 
&\bigg( \sum_{r=1}^n \pi_i \big( R^\sigma_{i; \lambda, j}[r]; x_{\sigma (1)}, \ldots, x_{\sigma (\lambda -1)}, \pi_j ( S^\sigma_{i; \lambda, j}[r]; x_{\sigma (\lambda)}, \ldots, x_{\sigma (\lambda + j -2)}, x_{\lambda + j -1}), \ldots, x_n  \big)\bigg) = 0.
\end{align}
For any fixed $i, j, \lambda$ and $\sigma \in Sh (\lambda -1, j-1)$, we can write
\begin{align*}
\sum_{r=1}^n = \sum_{r \in \{ \sigma (1), \ldots, \sigma (\lambda -1) \}}  + \sum_{r \in \{ \sigma(\lambda), \ldots, \sigma (\lambda + j -2) , \lambda + j -1 \} } + \sum_{\lambda +j}^n.
\end{align*}
We observe that
\begin{align}\label{iden-01}
& \sum_{r \in \{ \sigma (1), \ldots, \sigma (\lambda -1) \}}  \pi_i \big( R^\sigma_{i; \lambda, j}[r]; x_{\sigma (1)}, \ldots, x_{\sigma (\lambda -1)}, \pi_j ( S^\sigma_{i; \lambda, j}[r]; x_{\sigma (\lambda)}, \ldots, x_{\sigma (\lambda + j -2)}, x_{\lambda + j -1}), \ldots, x_n  \big) \nonumber \\
& = \sum_{r=1}^{\lambda-1} \pi_i \big([r];  x_{\sigma(1)}, \ldots, x_{\sigma (\lambda-1)}, \mu_j ( x_{\sigma (\lambda)}, \ldots, x_{\sigma (\lambda + j -2)}, x_{\lambda + j -1} ), x_{\lambda + j}, \ldots, x_n   \big).
\end{align}
Similarly, 
\begin{align}\label{iden-02}
&\sum_{r \in \{ \sigma(\lambda), \ldots, \sigma (\lambda + j -2) , \lambda + j -1 \} } \pi_i \big( R^\sigma_{i; \lambda, j}[r]; x_{\sigma (1)}, \ldots, x_{\sigma (\lambda -1)}, \pi_j ( S^\sigma_{i; \lambda, j}[r]; x_{\sigma (\lambda)}, \ldots, x_{\sigma (\lambda + j -2)}, x_{\lambda + j -1}), \ldots, x_n  \big) \nonumber \\
&= \pi_i \big([\lambda];  x_{\sigma(1)}, \ldots, x_{\sigma (\lambda-1)}, \mu_j ( x_{\sigma (\lambda)}, \ldots, x_{\sigma (\lambda + j -2)}, x_{\lambda + j -1} ), x_{\lambda + j}, \ldots, x_n   \big),
\end{align}
and
\begin{align}\label{iden-03}
&\sum_{\lambda +j}^n \pi_i \big( R^\sigma_{i; \lambda, j}[r]; x_{\sigma (1)}, \ldots, x_{\sigma (\lambda -1)}, \pi_j ( S^\sigma_{i; \lambda, j}[r]; x_{\sigma (\lambda)}, \ldots, x_{\sigma (\lambda + j -2)}, x_{\lambda + j -1}), \ldots, x_n  \big) \nonumber \\
&= \sum_{r = \lambda +1 }^i \pi_i \big([r];  x_{\sigma(1)}, \ldots, x_{\sigma (\lambda-1)}, \mu_j ( x_{\sigma (\lambda)}, \ldots, x_{\sigma (\lambda + j -2)}, x_{\lambda + j -1} ), x_{\lambda + j}, \ldots, x_n   \big).
\end{align}
By substituting (\ref{iden-01}), (\ref{iden-02}) and (\ref{iden-03}) in the identity (\ref{iden-00}), we get that $(\mathfrak{a}, \{ \mu_k \}_{k \geq 1})$ is a Leibniz$_\infty$-algebra.
\end{proof}

In the following, we introduce Rota-Baxter operators on Leibniz$_\infty$-algebras. Note that representations of Leibniz$_\infty$-algebras are defined in \cite{chen-sti-xu}. The notion of relative Rota-Baxter operators in the context of Leibniz$_\infty$-algebras can be defined similarly.

\begin{defn}
Let $(\mathfrak{g}, \{\mu_k \}_{k \geq 1} )$ be a Leibniz$_\infty$-algebra. A degree $0$ linear map $T : \mathfrak{g} \rightarrow \mathfrak{g}$ is said to be a Rota-Baxter operator if it satisfies
\begin{align*}
T (\mu_k (x_1, \ldots, x_k)) = \sum_{r=1}^k \mu_k (Tx_1, \ldots, Tx_{r-1}, x_r, Tx_{r+1}, \ldots, Tx_k), ~ \text{ for } k \geq 1.
\end{align*}
\end{defn}

Let $(\mathfrak{g}, [~,~])$ be a Leibniz algebra and $R: \mathfrak{g} \rightarrow \mathfrak{g}$ {be a Rota-Baxter operator on it}. In this case, the pair $(\mathfrak{g}, R)$ is often called a Rota-Baxter Leibniz algebra. A representation of a Rota-Baxter Leibniz algebra $(\mathfrak{g}, R)$ consists of a pair $(M, R_M)$ in which $M = (M, \rho^L, \rho^R)$ is a representation of the Leibniz algebra $\mathfrak{g}$ and $R_M : M \rightarrow M$ is a linear map satisfying
\begin{align*}
R_M (\rho^L (x,u)) =  R_M (\rho^L (x, R_M (u)) + \rho^L (R(x), u) ), ~~~~ R_M (\rho^R (u,x)) = R_M (\rho^R (u, R(x)) + \rho^R (R_M(u), x)), 
\end{align*}
for $ x \in \mathfrak{g}, u \in M.$ Let $(\mathfrak{g}, R)$ be a Rota-Baxter Leibniz algebra and $(M, R_M)$ be a representation of it. Then it is easy to see that $(\underbrace{M}_{-1} \oplus \underbrace{\mathfrak{g}}_0, \{\mu_k \}_{k \geq 1})$ is a Leibniz$_\infty$-algebra, where
\begin{align*}
\mu_1 = 0, ~~~~ \mu_2 (x, y) = [x,y], ~~~~ \mu_2 (x, u) = \rho^L (x, u), ~~~~ \mu_2 (u,x) = \rho^R (u, x) ~~~~ \text{and } ~ ~ \mu_k = 0 \text{ for } k \geq 3.
\end{align*}
Moreover, the map $T : M \oplus \mathfrak{g} \rightarrow M \oplus \mathfrak{g}$, $T ((u, x)) = (R_M (u), R(x))$ is a Rota-Baxter operator on the Leibniz$_\infty$-algebra $(M \oplus \mathfrak{g}, \{\mu_k \}_{k \geq 1})$.

\begin{thm}\label{rota-pl-inf}
Let $T: \mathfrak{g} \rightarrow \mathfrak{g}$ be a Rota-Baxter operator on the Leibniz$_\infty$-algebra $(\mathfrak{g}, \{\mu_k \}_{k \geq 1} )$. Then $(\mathfrak{g}, \{\pi_k \}_{k \geq 1})$ is a pre-Leibniz$_\infty$-algebra, where
\begin{align*}
\pi_k ([r]; x_1, \ldots, x_k ) := \mu_k (Tx_1, \ldots, Tx_{r-1}, x_r, Tx_{r+1}, \ldots, Tx_k), \text{ for } k \geq 1 \text{ and } 1 \leq r \leq k.
\end{align*}
\end{thm}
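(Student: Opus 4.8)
The plan is to verify the defining identities (\ref{pl-inf-iden}) of a pre-Leibniz$_\infty$-algebra directly, for each $n \geq 1$ and each symbol $[r] \in C_n$. Substituting $\pi_k([s]; z_1, \ldots, z_k) = \mu_k(Tz_1, \ldots, Tz_{s-1}, z_s, Tz_{s+1}, \ldots, Tz_k)$ into (\ref{pl-inf-iden}), every summand becomes a nested expression built out of $\mu_i$, $\mu_j$ and the operator $T$. I would then show, summand by summand, that the left-hand side of (\ref{pl-inf-iden}) for the symbol $[r]$ equals the left-hand side of the Leibniz$_\infty$ identity (\ref{leib-inf-iden}) for $\{ \mu_k \}_{k \geq 1}$ evaluated at the single tuple $(Tx_1, \ldots, Tx_{r-1}, x_r, Tx_{r+1}, \ldots, Tx_n)$; since (\ref{leib-inf-iden}) holds on arbitrary inputs, this at once yields (\ref{pl-inf-iden}). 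When $\mathfrak{g}$ is concentrated in degree $0$ with $\mu_k = 0$ for $k \neq 2$, this recovers the known fact that a Rota-Baxter operator on a Leibniz algebra induces a pre-Leibniz algebra.

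The mechanism couples the combinatorial maps $R^\sigma_{i;\lambda,j}$, $S^\sigma_{i;\lambda,j}$ of (\ref{comb-maps}) with the Rota-Baxter relation. Fix $i + j = n+1$, $1 \leq \lambda \leq i$ and $\sigma \in Sh(\lambda-1, j-1)$, and recall the partition of $C_{i+j-1}$ into the $\lambda-1$ front singleton boxes, the $\lambda$-th box (with $j$ entries), and the $i-\lambda$ back singleton boxes. The value $R^\sigma_{i;\lambda,j}[r]$ names the slot of the outer $\pi_i$ left undecorated by $T$, namely the slot whose box contains $[r]$; and $S^\sigma_{i;\lambda,j}[r]$ either names the undecorated slot of the inner $\pi_j$ (when $[r]$ lies in the $\lambda$-th box, at position $p$, say) or equals $[1] + \cdots + [j]$ (when $[r]$ lies in a singleton box). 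In the singleton-box case the inner entry of the summand is $\sum_{s=1}^{j} \pi_j([s]; z_1, \ldots, z_j)$, occupying a $T$-decorated slot of $\pi_i$; applying $T$ to it and invoking the Rota-Baxter relation rewrites it as $\mu_j$ evaluated on $T$-decorated arguments, with the power of $T$ dictated by the Rota-Baxter identity. In the $\lambda$-th-box case, expanding the outer $\pi_i$ and the inner $\pi_j$ shows directly that every argument except $x_r$ acquires exactly one $T$ while $x_r$ stays bare. Thus in all three situations the summand is a nested bracket $\mu_i(\ldots, \mu_j(\ldots), \ldots)$ in which $x_s$ is replaced by $Tx_s$ for $s \neq r$ and $x_r$ is unchanged --- precisely one summand of (\ref{leib-inf-iden}) at the tuple $(Tx_1, \ldots, x_r, \ldots, Tx_n)$, with $x_r$ sitting in the $R^\sigma$-slot of $\mu_i$ in the singleton-box case and in the $p$-th slot of the inner $\mu_j$ in the $\lambda$-th-box case. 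This trichotomy is the same one used in the proof of Theorem \ref{pl-l-inf}, now carried out for a single symbol $[r]$ rather than summed over all symbols.

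The remaining work, and the main obstacle, is bookkeeping. One must check that the correspondence above is a bijection between the index set of (\ref{pl-inf-iden}) for the fixed $[r]$ and the index set of (\ref{leib-inf-iden}) for the chosen tuple --- this is in fact the identity on the index set $\{ (i,j,\lambda,\sigma) \}$ --- and that the attached signs agree: the prefactor $\chi(\sigma)(-1)^{\lambda(j-1)}(-1)^{j(|x_{\sigma(1)}| + \cdots + |x_{\sigma(\lambda-1)}|)}$ of (\ref{pl-inf-iden}) should match the corresponding prefactor of (\ref{leib-inf-iden}), which is transparent on degrees since $T$ has degree $0$ (so $|Tx_s| = |x_s|$), but requires care with the Koszul signs coming from the shuffles and from the nesting. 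One must also confirm that the Rota-Baxter collapse in the singleton-box case is applied consistently across all summands, so that no spurious power of $T$ is left over. Granting these checks, summing over $(i,j,\lambda,\sigma)$ shows that the left-hand side of (\ref{pl-inf-iden}) for $[r]$ equals the left-hand side of (\ref{leib-inf-iden}) at $(Tx_1, \ldots, x_r, \ldots, Tx_n)$, hence vanishes; as $n$ and $[r]$ were arbitrary, $(\mathfrak{g}, \{ \pi_k \}_{k \geq 1})$ is a pre-Leibniz$_\infty$-algebra.
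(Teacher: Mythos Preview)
Your proposal is correct and follows essentially the same strategy as the paper: evaluate the Leibniz$_\infty$ identity (\ref{leib-inf-iden}) at the tuple $(Tx_1,\ldots,x_r,\ldots,Tx_n)$, split into the three cases according to whether $[r]$ lands in a front singleton box, the $\lambda$-th box, or a back singleton box, and identify each resulting term with the corresponding summand of (\ref{pl-inf-iden}). Your write-up is actually more explicit than the paper's about where the Rota-Baxter relation is invoked (only in the singleton-box cases, to collapse $T\big(\sum_s \pi_j([s];\ldots)\big)$ to $\mu_j$ on fully $T$-decorated inputs) and about the index-set and sign bookkeeping.
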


\begin{proof}
Since $(\mathfrak{g}, \{ \mu_k \}_{k \geq 1})$ is a Leibniz$_\infty$-algebra, the identity (\ref{leib-inf-iden}) holds for $n \geq 1$ and any $x_1, \ldots, x_n \in \mathfrak{g}$. Let us replace the tuple $(x_1, \ldots, x_n)$ of elements in $\mathfrak{g}$ by $(Tx_1, \ldots, x_r, \ldots, Tx_n)$, for some $1 \leq r \leq n$. For fixed $i, j, \lambda$ and $\sigma \in Sh (\lambda -1, j -1)$, if $r \in \{ \sigma (1), \ldots, \sigma (\lambda -1) \}$, then the term written in the second line of (\ref{leib-inf-iden}) looks as
\begin{align*}
&\mu_i \big(  T(x_{\sigma (1)}), \ldots, x_r, \ldots, T (x_{\sigma (\lambda -1)}), \mu_j \big( T (x_{\sigma (\lambda)}), \ldots, T (x_{\sigma (\lambda + j -2)}), T (x_{\lambda + j -1})  \big), \ldots, T (x_n) \big) \\
&= \pi_i \big( R^\sigma_{i;\lambda, j} [r]; x_{\sigma (1)}, \ldots, x_{\sigma (\lambda -1)}, \pi_j \big( S^\sigma_{i;\lambda, j} [r]; x_{\sigma (\lambda)}, \ldots, x_{\sigma (\lambda + j -2)}, x_{\lambda + i -1}  \big), \ldots, x_n   \big).
\end{align*}
Similarly, if $r \in \{ \sigma (\lambda), \ldots, \sigma (\lambda + j-2), \lambda + j -1  \}$ or $r \in \{ \lambda + j , \ldots, n \}$, then the term written in the second line of (\ref{leib-inf-iden}) will have the same expression as above. Therefore, in any choice of $r$, we get the identities of a pre-Leibniz$_\infty$-algebra.
\end{proof}

\medskip

\noindent {\bf Skeletal and strict pre-Leibniz$_\infty$-algebras and their classifications.} Here we mainly concentrate on pre-Leibniz$_\infty$-algebras whose underlying graded vector space is concentrated in degrees $-1$ and $0$. We call them $2$-term pre-Leibniz$_\infty$-algebras. Two particular classes of $2$-term pre-Leibniz$_\infty$-algebras are `skeletal' and `strict' algebras. Finally, we classify skeletal and strict algebras.

Let $(\mathfrak{a} = \oplus_{n \in \mathbb{Z}} \mathfrak{a}_n, \{ \pi_k \}_{k \geq 1})$ be a pre-Leibniz$_\infty$-algebra such that $\mathfrak{a}_n = 0$ for $n \neq -1, 0$. Then it follows from the degree reason that $\pi_k = 0$ for $k \geq 4$. We write $d = \pi_1 : \mathfrak{a}_{-1} \rightarrow \mathfrak{a}_0$. Then the identities of pre-Leibniz$_\infty$-algebra can be described by the following.

\begin{defn}\label{defn-2t}
A $2$-term pre-Leibniz$_\infty$-algebra is a triple $(\mathfrak{a}_{-1} \xrightarrow{d} \mathfrak{a}_0, \pi_2, \pi_3)$ consisting of a $2$-term chain complex $\mathfrak{a}_{-1} \xrightarrow{d} \mathfrak{a}_0$, maps $\pi_2 \in \mathrm{Hom} ({\bf k}[C_2 ] \otimes \mathfrak{a}_i \otimes \mathfrak{a}_j, \mathfrak{a}_{i+j})$ for $-1 \leq i, j, i+ j \leq 0$, and a map $\pi_3 \in \mathrm{Hom} ({\bf k}[C_3] \otimes \mathfrak{a}_0^{\otimes 3}, \mathfrak{a}_{-1})$ satisfying the following set of identities
\begin{itemize}
\item[(i)] $ d \pi_2 ([r]; x, u) = \pi_2 ([r]; x, du),$\\
$d \pi_2 ([r]; u, x) = \pi_2 ([r]; du, x),$
\item[(ii)] $\pi_2 ([r]; du, v) = \pi_2 ([r]; u, dv),$
\item[(iii)] $(\pi_2 \diamond_1 \pi_2 - \pi_2 \diamond_2 \pi_2)([s];x,y,z) = d \pi_3 ([s];x,y,z),$
\item[(iv)] $(\pi_2 \diamond_1 \pi_2 - \pi_2 \diamond_2 \pi_2)([s];x,y,u) = \pi_3 ([s];x,y,du),$\\
$(\pi_2 \diamond_1 \pi_2 - \pi_2 \diamond_2 \pi_2)([s];x,u,y) =  \pi_3 ([s];x,du,y),$ \\
$(\pi_2 \diamond_1 \pi_2 - \pi_2 \diamond_2 \pi_2)([s];u,x,y) =  \pi_3 ([s];du,x,y),$
\item[(v)] $(\pi_3 \diamond_1 \pi_2 - \pi_3 \diamond_2 \pi_2 + \pi_3 \diamond_3 \pi_2) ([t];x,y,z,w) = (\pi_2 \diamond_1 \pi_3 + \pi_2 \diamond_2 \pi_3) ([t];x,y,z,w),$
\end{itemize}
for all $x, y, z, w \in \mathfrak{a}_0$, $u, v \in \mathfrak{a}_{-1}$, $[r] \in C_2$, $[s] \in C_3$ and $[t] \in C_4.$
\end{defn}

\begin{defn}
A $2$-term pre-Leibniz$_\infty$-algebra $(\mathfrak{a}_{-1} \xrightarrow{d} \mathfrak{a}_0, \pi_2, \pi_3)$ is called 
\begin{itemize}
\item `skeletal' pre-Leibniz$_\infty$-algebra if $d=0$,
\item `strict' pre-Leibniz$_\infty$-algebra if $\pi_3 = 0$.
\end{itemize}
\end{defn}


In the following result, we classify skeletal pre-Leibniz$_\infty$-algebras in terms of third cocycles of pre-Leibniz algebras.

\begin{thm}\label{thm-skeletal}
There is a one-to-one correspondence between skeletal pre-Leibniz$_\infty$-algebras and tuples $(\mathfrak{a}, M, \theta)$ consisting of a pre-Leibniz algebra $\mathfrak{a}$, a representation $M$ and a $3$-cocycle $\theta \in Z^3_\mathsf{pL} (\mathfrak{a}, M)$ of the pre-Leibniz algebra $\mathfrak{a}$ with coefficients in $M$.
\end{thm}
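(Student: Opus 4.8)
The plan is to produce the bijection explicitly: unwind the five families of identities in Definition \ref{defn-2t} under the skeletal hypothesis $d=0$ and match them, one block at a time, against the axioms of a pre-Leibniz algebra, of a representation, and of a $3$-cocycle.

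First I would fix the object-level dictionary. Given a skeletal pre-Leibniz$_\infty$-algebra $(\mathfrak{a}_{-1} \xrightarrow{0} \mathfrak{a}_0, \pi_2, \pi_3)$, set $\mathfrak{a} := \mathfrak{a}_0$ and $M := \mathfrak{a}_{-1}$. By the degree constraints built into Definition \ref{defn-2t}, $\pi_2$ has exactly six nonzero components: two maps $\mathfrak{a}^{\otimes 2} \to \mathfrak{a}$ indexed by $[1], [2] \in C_2$ (call them $\triangleleft$ and $\triangleright$), two maps $\mathfrak{a} \otimes M \to M$ (call them $\triangleleft^L, \triangleright^L$) and two maps $M \otimes \mathfrak{a} \to M$ (call them $\triangleleft^R, \triangleright^R$), the potential $M^{\otimes 2} \to \mathfrak{a}_{-2}=0$ component being zero. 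Likewise $\pi_3 \in \mathrm{Hom}(\mathbf{k}[C_3] \otimes \mathfrak{a}_0^{\otimes 3}, \mathfrak{a}_{-1}) = C^3_\mathsf{pL}(\mathfrak{a}, M)$ by degree, and I set $\theta := \pi_3$. The inverse assignment takes a triple $(\mathfrak{a}, M, \theta)$ to the complex $M \xrightarrow{0} \mathfrak{a}$ with $\pi_2$ the six operations above and $\pi_3 := \theta$; these two assignments are visibly mutually inverse, so everything reduces to matching the axioms.

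Next I would carry out the axiom matching. With $d=0$, identities (i) and (ii) of Definition \ref{defn-2t} read $0=0$ and are vacuous. In (iii) and (iv) the right-hand sides also vanish, and what remains is the vanishing of $\pi_2 \diamond_1 \pi_2 - \pi_2 \diamond_2 \pi_2$ on \emph{all} triples: (iii) handles triples from $\mathfrak{a}_0$, (iv) handles triples with one entry in $\mathfrak{a}_{-1}$, and triples with two or more such entries are automatic since the relevant $\pi_2$-insertions land in $\mathfrak{a}_{-2}=0$. Since $\llbracket f, g \rrbracket_\mathsf{pL} = (f \diamond_1 g - f \diamond_2 g) + (g \diamond_1 f - g \diamond_2 f)$ for arity-$2$ elements, this is exactly $\llbracket \pi_2, \pi_2 \rrbracket_\mathsf{pL} = 0$ in $C^3_\mathsf{pL}(\mathfrak{a} \oplus M, \mathfrak{a} \oplus M)$; by Theorem \ref{mc-pl-char}(ii) together with the semidirect-product shape of $\pi_2$, this holds iff $(\mathfrak{a}, \triangleleft, \triangleright)$ is a pre-Leibniz algebra (the three values $[s] \in C_3$ in (iii) yielding the axioms (\ref{pl-iden1})--(\ref{pl-iden3}), exactly as in the proof of Theorem \ref{mc-pl-char}(ii)) and $(M, \triangleleft^L, \triangleright^L, \triangleleft^R, \triangleright^R)$ is a representation of it (the three positions of the $M$-entry times the three values of $[s]$ in (iv) yielding the nine representation axioms), via the semidirect-product correspondence of Proposition \ref{prop-semid}. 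Finally, viewing $\theta = \pi_3$ inside $C^3_\mathsf{pL}(\mathfrak{a} \oplus M, \mathfrak{a} \oplus M)$, I would use $\overline{\delta_\mathsf{pL}}\,\theta = (-1)^{3-1} \llbracket \overline{\pi}, \theta \rrbracket_\mathsf{pL} = (\pi_2 \diamond_1 \pi_3 + \pi_2 \diamond_2 \pi_3) - (\pi_3 \diamond_1 \pi_2 - \pi_3 \diamond_2 \pi_2 + \pi_3 \diamond_3 \pi_2)$, which identifies identity (v) with $\overline{\delta_\mathsf{pL}}\,\theta = 0$; since $\{C^\ast_\mathsf{pL}(\mathfrak{a}, M), \delta_\mathsf{pL}\}$ is a subcomplex of $\{C^\ast_\mathsf{pL}(\mathfrak{a} \oplus M, \mathfrak{a} \oplus M), \overline{\delta_\mathsf{pL}}\}$, this is the same as $\delta_\mathsf{pL}\,\theta = 0$, i.e. $\theta \in Z^3_\mathsf{pL}(\mathfrak{a}, M)$. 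Assembling these equivalences gives the stated bijection.

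The hard part will be the last step: confirming that the $\diamond_i$-operators appearing in (v) really do reassemble, with the correct signs and the correct distribution of the symbols $[r] \in C_4$ among the $\pi^L$-, $\pi^R$- and $\pi$-insertions, into the explicit coboundary (\ref{diff-formula}). Rather than expanding (\ref{diff-formula}) term by term, I would route everything through the semidirect product $\mathfrak{a} \ltimes M$ and the identity $\overline{\delta_\mathsf{pL}} = \pm \llbracket \overline{\pi}, - \rrbracket_\mathsf{pL}$, so that conditions (iii)--(v) all become assertions about the single graded Lie bracket on $C^\ast_\mathsf{pL}(\mathfrak{a} \oplus M, \mathfrak{a} \oplus M)$ and can be read off from Theorem \ref{mc-pl-char} and the subcomplex structure; the only genuine computations left are the two elementary bracket expansions quoted above, which come straight from (\ref{pl-bracket}).
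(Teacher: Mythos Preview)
Your proposal is correct and follows essentially the same strategy as the paper's proof: set $d=0$, observe that (i)--(ii) become vacuous, identify (iii) with the pre-Leibniz axioms on $\mathfrak{a}_0$, (iv) with the nine representation axioms for $\mathfrak{a}_{-1}$, and (v) with the $3$-cocycle condition on $\pi_3$, then read the inverse construction off the same dictionary.

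The only difference is one of packaging. The paper treats (iii), (iv), and (v) separately and matches each directly against the relevant axiom list, whereas you bundle (iii)+(iv) into the single Maurer--Cartan equation $\llbracket \pi_2, \pi_2 \rrbracket_\mathsf{pL}=0$ on the semidirect-product space $\mathfrak{a}\oplus M$ and then invoke Theorem~\ref{mc-pl-char}(ii) together with (the easy converse of) Proposition~\ref{prop-semid}; similarly, for (v) you pass through $\overline{\delta_\mathsf{pL}}$ on $\mathfrak{a}\oplus M$ and the subcomplex argument from the proof of Proposition~4.3 rather than comparing directly with formula~(\ref{diff-formula}). Your route has the advantage that it cleanly avoids any explicit unwinding of the combinatorial maps $R^\sigma$, $S^\sigma$ inside (\ref{diff-formula}), at the cost of invoking the unstated (but immediate) converse of Proposition~\ref{prop-semid}. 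Either presentation is fine; the underlying computation is the same.
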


\begin{proof}
Let $(\mathfrak{a}_{-1} \xrightarrow{0} \mathfrak{a}_0, \pi_2, \pi_3)$ be a skeletal pre-Leibniz$_\infty$-algebra. Then it follows from Definition \ref{defn-2t} (iii) that $\mathfrak{a}_0$  is a pre-Leibniz algebra with structure maps
\begin{align*}
x \triangleleft y = \pi_2 ([1]; x, y) ~~~~ \text{ and } ~~~~ x\triangleright y = \pi_2 ([2]; x, y), ~ \text{ for } x, y \in \mathfrak{a}_0.
\end{align*}
Moreover, the condition (iv) of Definition \ref{defn-2t} implies that $(\mathfrak{a}_{-1}, \triangleleft^L, \triangleright^L, \triangleleft^R, \triangleright^R)$ is a representation of the pre-Leibniz algebra $(\mathfrak{a}_0, \triangleleft, \triangleright)$, where
\begin{align*}
x \triangleleft^L u = \pi_2 ([1];x, u), ~~~~~~ x \triangleright^L u = \pi_2 ([2]; x, u), ~~~~~~ u \triangleleft^R x = \pi_2 ([1]; u, x) ~~~~ \text{ and } ~~~~ u \triangleright^R x = \pi_2 ([2]; u, x),
\end{align*}
for $x \in \mathfrak{a}_0$, $u \in \mathfrak{a}_{-1}$. Finally, the condition (v) of Definition \ref{defn-2t} is equivalent to $(\delta_\mathsf{pL} (\pi_3)) ([t];x, y, z, w) = 0$, where $\delta_\mathsf{pL}$ is the differential on the cochain complex of the pre-Leibniz algebra $\mathfrak{a}_0$ with coefficients in the representation $\mathfrak{a}_{-1}$. Hence we obtain a required tuple $(\mathfrak{a}_0, \mathfrak{a}_{-1}, \pi_3)$.

\medskip

Conversely, let $(\mathfrak{a}, M, \theta)$ be a triple consisting of a pre-Leibniz algebra $\mathfrak{a} = (\mathfrak{a}, \triangleleft, \triangleright)$, a representation $M = (M, \triangleleft^L, \triangleright^L, \triangleleft^R, \triangleright^R)$ and a $3$-cocycle $\theta \in Z^3_\mathsf{pL}(\mathfrak{a}, M)$. Then it can be easily checked that $(M \xrightarrow{0} \mathfrak{a}, \pi_2, \pi_3)$ is a skeletal pre-Leibniz$_\infty$-algebra, where
\begin{align*}
\pi_2 ([1]; x, y) =& x \triangleleft y, \quad \pi_2 ([2]; x, y) = x \triangleright y, \quad \pi_2 ([1]; x, u) = x \triangleleft^L u, \quad \pi_2 ([2]; x, u) = x \triangleright^L u, \\
&\pi_2 ([1]; u, x) = u \triangleleft^R x, \quad \pi_2 ([2]; u, x) = u \triangleright^R x ~~~~~ \text{ and } ~~~~ \pi_3 = \theta.
\end{align*}
Finally, the above two correspondences are inverses to each other. This completes the proof.
\end{proof}

In the following, we introduce crossed module of pre-Leibniz algebras and using them, we classify strict pre-Leibniz$_\infty$-algebras.

\begin{defn}
A crossed module of pre-Leibniz algebras is a tuple $(\mathfrak{a}, \mathfrak{b}, d, \pi^L, \pi^R)$ consisting of two pre-Leibniz algebras $\mathfrak{a}$ and $\mathfrak{b}$, a pre-Leibniz algebra morphism $d : \mathfrak{a} \rightarrow \mathfrak{b}$ and maps
\begin{align*}
\pi^L \in \mathrm{Hom} (\mathbf{k}[C_2] \otimes \mathfrak{b} \otimes \mathfrak{a}, \mathfrak{a}), \qquad \quad \pi^R \in \mathrm{Hom} (\mathbf{k}[C_2] \otimes \mathfrak{a} \otimes \mathfrak{b}, \mathfrak{a})
\end{align*}
which makes $\mathfrak{a}$ into a representation of the pre-Leibniz algebra $\mathfrak{b}$, satisfying additionally for $x, y \in \mathfrak{a}$, $b \in \mathfrak{b}$, $[r] \in C_2$ and $[s] \in C_3$,
\begin{itemize}
\item[(i)] $d ( \pi^L ([r];b,x)) = \pi_\mathfrak{b} ([r]; b, d(x)),$ \\
$d (\pi^R ([r]; x, b)) = \pi_\mathfrak{b} ([r]; d(x), b),$
\item[(ii)] $\pi^L ([r]; d(x), y)= \pi_\mathfrak{a} ([r]; x, y),$\\
$\pi^R ([r]; x, d(y)) = \pi_\mathfrak{a} ([r]; x, y),$
\item[(iii)] $(\pi^R \diamond_1 \pi_\mathfrak{a}) ([s];x,y, b) = (\pi_\mathfrak{a} \diamond_2 \pi^R) ([s]; x, y, b),$\\
$(\pi_\mathfrak{a} \diamond_1 \pi^R) ([s]; x,b,y) = (\pi_\mathfrak{a} \diamond_2 \pi^L) ([s];x, b, y),$\\
$(\pi_\mathfrak{a} \diamond_1 \pi^L)([s];b,x,y) = (\pi^L \diamond_2 \pi_\mathfrak{a}) ([s];b,x,y).$
\end{itemize}
\end{defn}

\begin{thm}\label{thm-strict}
There is a one-to-one correspondence between strict pre-Leibniz$_\infty$-algebras and crossed module of pre-Leibniz algebras.
\end{thm}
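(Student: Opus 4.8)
The plan is to construct the correspondence in both directions by unwinding the two sets of axioms and checking that they match term by term. Given a strict pre-Leibniz$_\infty$-algebra $(\mathfrak{a}_{-1} \xrightarrow{d} \mathfrak{a}_0, \pi_2, \pi_3)$ with $\pi_3 = 0$, I would first set $\mathfrak{b} = \mathfrak{a}_0$ with the two operations $x \triangleleft y = \pi_2([1];x,y)$, $x \triangleright y = \pi_2([2];x,y)$; condition (iii) of Definition \ref{defn-2t} (with $\pi_3 = 0$) says exactly $\pi_2 \diamond_1 \pi_2 - \pi_2 \diamond_2 \pi_2 = 0$ on $\mathfrak{a}_0^{\otimes 3}$, which by the computation in the proof of Theorem \ref{mc-pl-char} is the Maurer-Cartan equation, hence $\mathfrak{b}$ is a pre-Leibniz algebra. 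Next I would set $\mathfrak{a} = \mathfrak{a}_{-1}$, but it carries no operation of its own a priori; instead I would define $\pi_\mathfrak{a}$ using $d$ via the restriction of $\pi_2$ to the relevant mixed components and check, using conditions (i)--(ii), that this is well-defined and that $d$ becomes a morphism. The mixed maps $\pi^L([r];b,x) := \pi_2([r];b,x)$ and $\pi^R([r];x,b) := \pi_2([r];x,b)$ for $b \in \mathfrak{a}_0$, $x \in \mathfrak{a}_{-1}$ give the $\mathfrak{b}$-representation structure on $\mathfrak{a}$; condition (iv) of Definition \ref{defn-2t} (again with $\pi_3 = 0$) becomes precisely the representation axioms, and condition (v) (with $\pi_3 = 0$) becomes the compatibility conditions (iii) of the crossed module definition.

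The converse direction runs the same identifications backwards: given a crossed module $(\mathfrak{a}, \mathfrak{b}, d, \pi^L, \pi^R)$, put $\mathfrak{a}_0 = \mathfrak{b}$, $\mathfrak{a}_{-1} = \mathfrak{a}$, let the differential be $d$, define $\pi_2$ on the four graded components by the pre-Leibniz structure of $\mathfrak{b}$ (the $(0,0)$-part), by $\pi^L$ (the $(0,-1)$-part) and $\pi^R$ (the $(-1,0)$-part), and set $\pi_3 = 0$. One then verifies each item (i)--(v) of Definition \ref{defn-2t}: items (i)--(ii) come from the crossed module conditions (i)--(ii), item (iii) is the Maurer-Cartan equation for $\mathfrak{b}$, item (iv) is the representation property of $\mathfrak{a}$ over $\mathfrak{b}$, and item (v) is the crossed module conditions (iii). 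Finally I would observe that the two constructions are mutually inverse, essentially because all the structure maps on both sides are literally the same functions after the dictionary $\mathfrak{a}_0 \leftrightarrow \mathfrak{b}$, $\mathfrak{a}_{-1} \leftrightarrow \mathfrak{a}$ is applied.

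The main obstacle is bookkeeping: one must carefully match the combinatorial $\diamond_i$-expressions appearing in Definition \ref{defn-2t}(iv)--(v) — which are phrased in terms of the operadic composition on $C^\ast_\mathsf{pL}$ — against the $9$ representation identities and the $3$ compatibility identities in the crossed module definition, keeping track of which element of $C_2$ or $C_3$ is being evaluated and which argument is the $\mathfrak{a}_{-1}$-slot. In particular, expanding $(\pi_2 \diamond_1 \pi_2 - \pi_2 \diamond_2 \pi_2)([s];-)$ with one argument in $\mathfrak{a}_{-1}$ and the structure maps $\pi^L, \pi^R$ substituted in must be shown to reproduce, term for term, the left- and right-action identities; this is the only genuinely non-formal step, and it is essentially the same verification already carried out (in the $\mathfrak{a}_0$-only case) inside the proof of Theorem \ref{mc-pl-char}. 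The signs are trivial here since everything lives in degrees $-1$ and $0$ with $\pi_3 = 0$, so there is no homotopy coherence (the Jacobiator-type term) to worry about. Hence the proof is a direct, if slightly lengthy, translation, and I would present it by exhibiting the dictionary and then stating that each axiom corresponds to the claimed one, spelling out one representative identity in each family.
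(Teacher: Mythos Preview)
Your overall strategy matches the paper's proof exactly, and the dictionary you set up is the right one. There is, however, one genuine error in your axiom-matching: condition (v) of Definition \ref{defn-2t} reads
\[
(\pi_3 \diamond_1 \pi_2 - \pi_3 \diamond_2 \pi_2 + \pi_3 \diamond_3 \pi_2) = (\pi_2 \diamond_1 \pi_3 + \pi_2 \diamond_2 \pi_3),
\]
so when $\pi_3 = 0$ it becomes $0 = 0$ and carries no information whatsoever. It cannot be the source of the crossed-module compatibility conditions (iii), and conversely in the backward direction nothing is needed to verify (v). If you try to ``spell out one representative identity'' as you propose, you will find this immediately.

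What actually happens is that the crossed-module conditions (iii) --- which involve two elements of $\mathfrak{a}$ and one of $\mathfrak{b}$ --- are derived, not primitive, on the $2$-term side. The key is your own definition $\pi_{\mathfrak{a}}([r];u,v) := \pi_2([r];du,v) = \pi_2([r];u,dv)$: once you substitute this into the crossed-module identities (iii) and use conditions (i) of Definition \ref{defn-2t} to push $d$ past $\pi_2$, each of the three identities in (iii) reduces to an instance of condition (iv) (with $\pi_3=0$) in which one of the two $\mathfrak{a}_0$-arguments happens to be $du$ or $dv$. The same mechanism shows that $\pi_{\mathfrak{a}}$ really does satisfy the pre-Leibniz identities on $\mathfrak{a}_{-1}$ (a point you left implicit) and that $d$ is a morphism. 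So in your write-up, drop the claim about condition (v) and instead explain that the crossed-module axioms (iii), together with the pre-Leibniz structure on $\mathfrak{a}_{-1}$, all follow from condition (iv) via the definition of $\pi_{\mathfrak{a}}$ and conditions (i)--(ii). With that correction, your plan goes through and coincides with the paper's argument.
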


\begin{proof}
Let $(\mathfrak{a}_{-1} \xrightarrow{d} \mathfrak{a}_0, \pi_2, \pi_3 = 0)$ be a strict pre-Leibniz$_\infty$-algebra. Define an element 
\begin{align*}
\pi_{\mathfrak{a}_{-1}} \in \mathrm{Hom} ({\bf k} [C_2] \otimes (\mathfrak{a}_{-1})^{\otimes 2}, \mathfrak{a}_{-1}) ~~\text{ by } ~~ \pi_{\mathfrak{a}_{-1}} ([r]; u, v) := \pi_2 ([r]; du, v) = \pi_2 ([r]; u, dv),
\end{align*}
for $[r] \in C_2$ and $u, v \in \mathfrak{a}_{-1}$. Then $\pi_{\mathfrak{a}_{-1}}$ defines a pre-Leibniz algebra structure on $\mathfrak{a}_{-1}$. We also define an element 
\begin{align*}
\pi_{\mathfrak{a}_{0}} \in \mathrm{Hom} ({\bf k} [C_2] \otimes (\mathfrak{a}_{0})^{\otimes 2}, \mathfrak{a}_{0}) ~~\text{ by } ~~ \pi_{\mathfrak{a}_{0}} ([r]; x,y) := \pi_2 ([r]; x,y), \text{ for } [r] \in C_2 \text{ and } x, y\in \mathfrak{a}_0.
\end{align*}
It follows from Definition \ref{defn-2t} (iii) that $\pi_{\mathfrak{a}_0}$ defines a pre-Leibniz algebra structure on $\mathfrak{a}_0$. Moreover, the map $d : \mathfrak{a}_{-1} \rightarrow \mathfrak{a}_0$ is a pre-Leibniz algebra morphism as
\begin{align*}
d \big(  \pi_{\mathfrak{a}_{-1}} ([r]; u, v) \big) = d (\pi_2 ([r]; du, v)) = \pi_2 ([r]; du, dv) = \pi_{\mathfrak{a}_0} ([r]; du, dv).
\end{align*}
Finally, we define maps $\pi^L \in \mathrm{Hom} ({\bf k} [C_2] \otimes \mathfrak{a}_0 \otimes \mathfrak{a}_{-1}, \mathfrak{a}_{-1})$ and $\pi^R \in \mathrm{Hom} ({\bf k} [C_2] \otimes \mathfrak{a}_{-1} \otimes \mathfrak{a}_{0}, \mathfrak{a}_{-1})$ by
\begin{align*}
\pi^L ([r];x,u) = \pi_2 ([r];x,u) ~~ \text{ and } ~~ \pi^R ([r];u,x) = \pi_2 ([r];u,x),~ \text{ for } x \in \mathfrak{a}_0, u \in \mathfrak{a}_{-1}.
\end{align*}
It is easy to see that $(\mathfrak{a}_{-1}, \mathfrak{a}_0, d, \pi^L, \pi^R)$ is a crossed module of pre-Leibniz algebras.

Conversely, let $(\mathfrak{a}, \mathfrak{b}, d, \pi^L, \pi^R)$ be a crossed module of pre-Leibniz algebras. Then it is straightforward to verify that $(\mathfrak{a} \xrightarrow{d} \mathfrak{b}, \pi_2 , \pi_3  = 0)$ is a strict pre-Leibniz$_\infty$-algebra, where
\begin{align*}
\pi_2 ([r]; b, b') = \pi_\mathfrak{b} ([r];b, b'), ~~~~ \pi_2 ([r];b, x) = \pi^L ([r]; b, x) ~~ \text{ and } ~~ \pi_2 ([r];x, b) = \pi^R ([r];x,b),
\end{align*}
for $b, b' \in \mathfrak{b}$  and $ x \in \mathfrak{a}.$ This completes the proof.
\end{proof}

\begin{exam}
Let $\mathfrak{a}$ be a pre-Leibniz algebra, and let $\pi \in \mathrm{Hom}({\bf k}[C_2]\otimes \mathfrak{a}^{\otimes 2}, \mathfrak{a})$ be the corresponding Maurer-Cartan element. Then $(\mathfrak{a} \xrightarrow{\mathrm{id}} \mathfrak{a}, \pi_2 = \pi, \pi_3 = 0)$ is a strict pre-Leibniz$_\infty$-algebra. The corresponding crossed module of pre-Leibniz algebras is given by $(\mathfrak{a}, \mathfrak{a}, \mathrm{id}, \pi, \pi).$
\end{exam}

\medskip



\noindent {\bf Acknowledgements.}  The author would like to thank Indian Institute of Technology (IIT) Kharagpur for providing the beautiful academic environment where the research has been carried out.


\end{document}